\documentclass[]{article}
\usepackage[scale=0.7]{geometry}
\usepackage{amssymb,amsthm,amsmath,geometry,bm,hyperref}
\usepackage{algorithm}
\usepackage[noend]{algpseudocode}
\usepackage{color,xcolor}
\usepackage{graphicx,subcaption}
\usepackage{booktabs,multicol,multirow}
\usepackage{comment}

\makeatletter

\newcommand{\Rmnum}[1]{\expandafter\@slowromancap\romannumeral #1@}

\makeatother
\usepackage{authblk}
\usepackage{hyperref}                                                  
\hypersetup{hypertex = true, colorlinks = true, linkcolor = blue, anchorcolor = blue, citecolor =blue}
\usepackage[square,numbers]{natbib}
\newtheorem{theorem}{Theorem}
\newtheorem{lemma}{Lemma}

\newtheorem{definition}{Definition}

\newtheorem{remark}{Remark}

\newcommand{\vecl}{\boldsymbol{\ell}}
\newcommand{\vech}{\boldsymbol{h}}
\newcommand{\veck}{\boldsymbol{k}}

\newcommand{\vecx}{\boldsymbol{x}}
\newcommand{\vecb}{\boldsymbol{b}}
\newcommand{\vecy}{\boldsymbol{y}}
\newcommand{\vecgamma}{\boldsymbol{\gamma}}

\newcommand{\setH}{\mathcal{H}}
\newcommand{\vecz}{\boldsymbol{0}}
\newcommand{\vecg}{\boldsymbol{g}}
\newcommand{\setA}{A_{\alpha,\vecgamma,M}}

\newcommand{\set}[1]{\left\{#1\right\}}
\newcommand{\seq}[1]{\left(#1\right)}

\newcommand{\R}{\mathbb{R}}
\newcommand{\Z}{\mathbb{Z}}

\newcommand{\C}{\mathbb{C}}
\newcommand{\bz}{\boldsymbol{z}}
\newcommand{\bDelta}{\boldsymbol{\Delta}}

\newcommand{\SpaceKor}{\mathcal{K}_{d, \alpha, \vecgamma}}

\title{A deterministic multiple-shift lattice algorithm for function approximation in Korobov and half-period Cosine spaces}

\author[1]{Jiarui Du}
\author[2]{Josef Dick \thanks{Corresponding author: josef.dick@unsw.edu.au}}

\affil[1]{School of Mathematics, South China University of Technology, Guangzhou 510641, Guangdong, People’s Republic of China}
\affil[2]{School of Mathematics and Statistics, The University of New South Wales, Kensington, NSW 2052, Australia}
\begin{document}
	\maketitle

    \begin{abstract}
    Approximating multivariate periodic functions in weighted Korobov spaces via rank-1 lattices is fundamentally limited by frequency aliasing. Existing optimal-rate methods rely on randomized constructions or large pre-computations. We propose a fully deterministic multiple-shift lattice algorithm without pre-computation. First, we develop a simplified multiple shift framework for aliased frequency fibers that reduces sampling costs. Second, leveraging the Chinese Remainder Theorem and the Weil bound, we introduce an adaptive hybrid construction that algebraically guarantees the full rank and bounded condition number of the reconstruction matrix. We rigorously prove that this deterministic method maintains the optimal convergence rate in the worst-case setting.

    Furthermore, we extend this framework to non-periodic, half-period cosine spaces via the tent transformation. By establishing a strict projection equivalence, we prove that the algorithm attains optimal $L_2$ and $L_\infty$ approximation orders in the half-period cosine space, successfully resolving an open theoretical problem posed by Suryanarayana et al. (2016). This mathematically also validates the proposed algorithm as a generic meshless spectral solver for high-dimensional boundary value problems, such as the Poisson equation with Neumann conditions. Numerical experiments corroborate the theoretical bounds, demonstrating an order-of-magnitude reduction in sampling complexity over probabilistic baselines while ensuring absolute deterministic stability.
\end{abstract}

\maketitle

\section{Introduction}\label{sec:introduction}
The approximation of multivariate periodic functions in weighted Korobov spaces based on quasi-Monte Carlo methods is a fundamental problem, particularly for mitigating the curse of dimensionality in high-dimensional settings \cite{dick2022}. A classical approach employs rank-1 lattice rules, which was systematically extended from numerical integration to worst-case multivariate function approximation in \cite{kuo2004lattice}. The approach described therein is based on approximating the Fourier coefficients of the function from a certain finite set of frequencies. However, the exact reconstruction of these Fourier coefficients via deterministic single rank-1 lattices is inherently hindered by the aliasing effect, where high-frequency and low-frequency components become indistinguishable on the lattice nodes \cite{kammerer2013,kuo2021function}. In their seminal work, Byrenheid et al. \cite{byrenheid2017tight} established a theoretical barrier for this phenomenon, proving that any deterministic algorithm based on a single rank-1 lattice is subject to a strict lower bound of $\mathcal{O}(N^{-\alpha/2})$ for the worst-case $L_2$-approximation error, where $N$ is the lattice size and $\alpha > 1/2$ is the smoothness parameter.

To circumvent this theoretical lower bound and approach the optimal convergence rate of almost $\mathcal{O}(N^{-\alpha})$, research initially focused on structural decoupling. K\"ammerer and Volkmer \cite{Kammerer2019} proposed a reconstructing scheme via multiple rank-1 lattices to physically separate aliased frequencies, attaining optimal $L_\infty$ convergence in unweighted spaces. This framework leverages reconstruction theory for the stable recovery of hyperbolic-cross trigonometric polynomials from rank-1 samples \cite{kammerer2013}, alongside single-lattice approximation bounds \cite{kammerer2015approximation}. The algorithmic foundations for exact recovery and sparse FFT-type methods were established in \cite{kammerer2018}. This methodology was subsequently generalized to weighted Korobov spaces in the monograph by \cite{dick2022}, albeit under the conditions of smoothness $\alpha > 1$ and product weights. Cai and Goda \cite{cai2026note} extended the multiple rank-1 lattice framework to lower smoothness cases ($1/2 < \alpha \le 1$) and general weights, attaining nearly optimal randomized $L_2$ convergence by incorporating random shifts. 

Recent advancements heavily utilize randomization to refine these bounds and relax parameter restrictions. Cai et al. \cite{cai2024l_2} demonstrated that a randomized CBC algorithm combined with a random number of points can surpass the deterministic $\mathcal{O}(N^{-\alpha/2})$ barrier in expectation. Subsequently, Cai et al. \cite{cai2025} proposed applying $\mathcal{O}((\log N)^d)$ independent random shifts to a single rank-1 lattice. By solving a localized least-squares problem, this multiple-shift algorithm achieves optimal convergence rates in both the worst-case $L_\infty$ and randomized $L_2$ settings. Concurrently, median lattice algorithms \cite{pan2025universal,pan2025} achieve nearly optimal $L_2$ bounds with high probability via the median-of-means technique.

Despite these theoretical advancements, algorithms attaining the optimal $\mathcal{O}(N^{-\alpha+\epsilon})$ rate currently rely explicitly on probabilistic existence arguments or randomized constructions. In the deterministic regime, progress remains scarce. While Gross et al. \cite{gross2021} established a pioneering deterministic algebraic construction for the multiple-lattice framework, it intrinsically relies on the prior computation of a massive single base lattice of size $\mathcal{O}(|A|^2)$, where $|A|$ is the frequency set cardinality. Consequently, a deterministic construction capable of matching the optimal convergence rate without the heavy preprocessing bottleneck of a pre-computed large lattice remains absent from the literature.

Beyond the periodic setting, extending lattice-based approximation to non-periodic domains is critical for physical applications, such as high-dimensional partial differential equations (PDEs) \cite{li2003trigonometric, munthe2012multidimensional}. To overcome boundary singularities, a standard approach employs the tent transformation to map non-periodic functions into the periodic Korobov space \cite{cools2016tent, dick2014lattice}. While Suryanarayana et al. \cite{suryanarayana2016} established the numerical stability of collocation using tent-transformed lattices, the full error analysis intrinsically tied to the smoothness of the cosine space was explicitly left as an open problem. Furthermore, although recent frameworks unify discrete least-squares approximation across Fourier and cosine spaces \cite{kuo2021function}, a fully deterministic construction preserving optimal approximation orders in non-periodic settings remains unestablished.

In this paper, we bridge these theoretical gaps by proposing a fully deterministic multiple-shift lattice algorithm. By exploiting the algebraic properties of the Chinese Remainder Theorem (CRT) and the Weil bound, we develop a deterministic hybrid construction for the shift set that structurally guarantees the full rank of the reconstruction matrix and bounds its condition number, eliminating any dependence on probabilistic arguments or massive pre-computed lattices.

Our main contributions are: 
(i) We establish the first fully deterministic multiple-shift framework without heavy pre-computation for multivariate function approximation. 
(ii) We develop a uniform shift strategy combined with a deterministic hybrid construction. This fundamentally shifts the algebraic paradigm from exact de-aliasing—which suffers from combinatorial explosion—to bounded non-orthogonality, mathematically ensuring deterministic stability using a strictly logarithmically bounded shift count $S$. 
(iii) We rigorously prove that this deterministic construction tightens the worst-case error penalty term associated with the maximum fiber length $R$ from an implicit $\mathcal{O}(R^{3/2})$ to $\mathcal{O}(R^{1/2})$, maintaining the optimal convergence rate of $\mathcal{O}(N^{-\alpha+\epsilon})$. 
(iv) We extend this deterministic framework to the non-periodic setting via a tent transformation. By establishing a rigorous projection equivalence between the Korobov space and the half-period cosine space, we prove our algorithm attains optimal approximation orders for both $L_2$ and $L_\infty$ errors, resolving the open problem posed in \cite{suryanarayana2016}.

Consequently, we formulate the proposed algorithm as a generic meshless spectral solver for high-dimensional boundary value problems (e.g., the Poisson equation with Neumann boundary conditions). Comprehensive numerical experiments validate our theoretical bounds, demonstrating order-of-magnitude reductions in total sampling costs and guaranteed absolute stability compared to existing probabilistic shift strategies. MATLAB scripts (MATLAB R2023b) are available from \url{https://github.com/Jiarui-Du/multiple-shift}.
	
	\section{Preliminaries}
    \subsection{Weighted Korobov Space }\label{sec:kor}
    Let $d \ge 1$ be the dimension and $[0,1]^d$ be the domain. For a smoothness parameter $\alpha > 1/2$ and a sequence of positive weights $\vecgamma = (\gamma_1, \gamma_2, \dots)$, the weight function $r_{\alpha, \vecgamma}(\veck)$ for a frequency vector $\veck \in \Z^d$ is defined as
    $$
    r_{\alpha, \vecgamma}(\veck) = \prod_{j=1}^d \max\left\{ 1,\frac{|k_j|^\alpha}{\gamma_j}\right\}.
    $$
    \begin{definition}
        The weighted Korobov space $\SpaceKor$ is a reproducing kernel Hilbert space of one-periodic functions $f: \R^d \to \C$ with absolutely convergent Fourier series, defined by
    $$
    \SpaceKor := \set{f \in L_2([0,1]^d) : \|f\|^2_{\SpaceKor} :=  \sum_{\veck \in \Z^d} |\widehat{f}(\veck)|^2 r_{\alpha, \vecgamma}^2(\veck)  < \infty},
    $$
    where $\widehat{f}(\veck) = \int_{[0,1]^d} f(\vecx) e^{-2\pi \mathrm{i} \veck \cdot \vecx} d\vecx$.
    \end{definition}
    Our objective is to approximate Fourier coefficients for indices in a hyperbolic cross set defined as
    $$
    \setA := \{ \veck \in \Z^d : r_{\alpha, \vecgamma}(\veck) < M \}.
    $$
     Following \cite[Lemma 2.1]{cai2025}, we choose the parameter $M$ as
    \begin{equation}
        M = \sup_{1/\alpha < \lambda \le 2} \left( N\prod_{j=1}^d (1 + 2\gamma_j^\lambda \zeta(\alpha\lambda))^{-1} \right)^{1/\lambda}, \nonumber
    \end{equation}
    where $\zeta(\cdot)$ is the Riemann zeta function. This choice ensures that $|\setA| \le N$.

    \subsection{Rank-1 Lattices and Fibers}
    A rank-1 lattice point set with a prime number of points $N$ and a generating vector $\vecg \in \{1, \dots, N-1\}^d$ is defined as 
    $$\Lambda(\vecg, N) := \{ \{n\vecg/N\} : n = 0, \dots, N-1 \},$$ where $\{\cdot\}$ denotes the fractional part of each component. The associated dual lattice is given by
    \begin{equation}\label{eq:dual}
         \mathcal{L}_N^\perp(\vecg) := \{ \veck \in \Z^d : \veck \cdot \vecg \equiv 0 \pmod N \}.
    \end{equation}
    The orthogonality property of lattice rules implies that for any $\veck, \vecl \in \Z^d$, if $\vecl - \veck \in \mathcal{L}_N^\perp(\vecg)$, the functions $e^{2\pi \mathrm{i} \veck \cdot \vecx}$ and $e^{2\pi \mathrm{i} \vecl \cdot \vecx}$ are indistinguishable on $\Lambda(\vecg, N)$ 
    \begin{equation*}
    e^{2\pi \mathrm{i} \veck \cdot \{n \vecg /N\}} = e^{2\pi \mathrm{i} n \veck \cdot \vecg / N} = e^{2 \pi \mathrm{i} n (\veck + \vecl - \veck) \cdot \vecg / N} = e^{2\pi \mathrm{i} n \vecl \cdot \vecg / N} = e^{2\pi \mathrm{i} \vecl \cdot \{ n \vecg / N\}},
    \end{equation*}    
    leading to the well-known aliasing effect.
    
    For a given frequency $\vecl \in \setA$, we define the \textit{fiber} of $\vecl$ with respect to $\vecg$ as the set of all frequencies in the index set $\setA$ that alias with $\vecl$:
    $$
    \Gamma_{\alpha,\vecgamma,N}(\vecl; \vecg) := \{ \veck \in \setA : \veck \cdot \vecg \equiv \vecl \cdot \vecg \pmod N \}.
    $$
    Let $\vecl^{(1)}, \vecl^{(2)}, \dots, \vecl^{(J)} \in \setA$ be such that
    $$
    \Gamma_{\alpha,\vecgamma,N}(\vecl^{(i)}; \vecg) \cap \Gamma_{\alpha,\vecgamma,N}(\vecl^{(j)}; \vecg) = \emptyset,
    $$
    for $i \neq j$, and
    $$
    \bigcup_{j=1}^J \Gamma_{\alpha,\vecgamma,N}(\vecl^{(j)}; \vecg) = \setA.
    $$
    Let $R:= R_{\alpha,\vecgamma,N}(\vecg) := \max_{\vecl \in \setA} |\Gamma_{\alpha,\vecgamma,N}(\vecl; \vecg)|$ denote the maximum fiber length over $\setA$. This parameter $R$ is critical, as it determines the minimum number of shifts required for exact reconstruction in our proposed algorithm.

    \subsection{Construction of Generating Vector}
    To achieve computational efficiency and ensure theoretical convergence, we employ a deterministic component-by-component (CBC) construction for the generating vector $\vecg \in \{1, \dots, N-1\}^d$. This greedy algorithm constructs $\vecg$ by a component-by-component minimization of the squared worst-case error $P_{\alpha,\vecgamma,N}(\vecg)$ for numerical integration in $\SpaceKor$, defined as
    $$
    P_{\alpha,\vecgamma,N}(\vecg) = \sum_{\veck \in \mathcal{L}_N^\perp(\vecg) \setminus \{\vecz\}} \frac{1}{r_{\alpha,\vecgamma}^2(\veck)}.
    $$
    Minimizing this quantity effectively provides a lower bound for the figure of merit $$\rho(\vecg) := \min_{\veck \in \mathcal{L}_N^\perp(\vecg) \setminus \{\vecz\}} r_{\alpha,\vecgamma}(\veck).$$ For an integer smoothness parameter $\alpha \in \mathbb{N}$, $P_{\alpha,\vecgamma,N}(\vecg)$ possesses an explicit representation \cite{dick2022}:
    \begin{equation}\label{eq:worst_case_error}
        P_{\alpha,\vecgamma,N}(\vecg) = -1 + \frac{1}{N} \sum_{n=0}^{N-1} \prod_{j=1}^d \left( 1 + \gamma_j^2 \frac{(-1)^{\alpha+1} (2\pi)^{2\alpha}}{(2\alpha)!} B_{2\alpha}\left(\left\{ \frac{n g_j}{N} \right\}\right) \right),
    \end{equation}
    where $B_{2\alpha}(\cdot)$ denotes the Bernoulli polynomial of degree $2\alpha$. The construction procedure is detailed in Algorithm \ref{alg:CBCg}. As demonstrated in \cite{nuyens2006fast}, the CBC construction can be implemented using the Fast Fourier Transform (FFT) with $\mathcal{O}(dN \log N)$ arithmetic operations.

     \begin{algorithm}
    \caption{CBC Construction for $\vecg$}
    \label{alg:CBCg}
    \begin{algorithmic}[1]
    \Require Dimension $d$, smoothness $\alpha \in \mathbb{N}$, weights $\vecgamma$, prime $N$.
    \Ensure Generating vector $\vecg = (g_1, \dots, g_d)$.
    \State Set $g_1 = 1$.
    \For{$s = 1$ to $d-1$}
        \State Find $g_{s+1} \in \{1, \dots, N-1\}$ that minimizes $P_{\alpha,\vecgamma,N}((g_1, \dots, g_s, g_{s+1}))$.
    \EndFor
    \State \Return $\vecg = (g_1, \dots, g_d)$.
    \end{algorithmic}
    \end{algorithm}

    For non-integer $\alpha$, we utilize Jensen's inequality: for any $0 < \lambda \le 1$, 
    \begin{equation}
        P_{\alpha, \vecgamma, N}(\vecg) \le \left( P_{\alpha\lambda, \vecgamma^\lambda, N}(\vecg) \right)^{1/\lambda}.\nonumber
    \end{equation}
    To enable the use of \eqref{eq:worst_case_error}, we set $\lambda = \lfloor \alpha \rfloor / \alpha$ such that $\alpha\lambda \in \mathbb{N}$. We then execute Algorithm \ref{alg:CBCg} to minimize $P_{\lfloor \alpha \rfloor, \widetilde{\vecgamma}, N}(\vecg)$ with $\widetilde{\vecgamma} = \vecgamma^\lambda$.
    
    A critical property of this CBC construction is the guaranteed lower bound on the figure of merit:
    $$
    \rho(\vecg) \ge \sup_{1/\alpha < \lambda \le 2} \left( \frac{N}{2} \prod_{j=1}^d (1 + 2\gamma_j^\lambda \zeta(\alpha\lambda))^{-1} \right)^{1/\lambda}.
    $$
    By setting the truncation parameter $M$ to match this lower bound, we ensure that 
    \begin{equation}\label{eq_zero_fiber}
    \setA \cap \mathcal{L}_N^\perp(\vecg) = \{\vecz\}.
    \end{equation} 
    The following lemma, adapted from \cite[Lemma 3.4]{cai2025}, provides a bound on the maximum fiber length $R$.
    \begin{lemma}\label{lem:R}
        Let $N > 2$ be a prime number and $\gamma_j \in (0,1]$. Let $\vecg$ be constructed by Algorithm \ref{alg:CBCg} and $M$ be chosen as the lower bound of $\rho(\vecg)$.
        Let $m \in \Z$ satisfy $2^{m-1} < M^{1/\alpha} \le 2^m$. Then $|\setA| \le N/2$, $\rho(\vecg) \ge M$, and the maximum fiber length $R$ satisfies
        \begin{equation}
            R \le \min \left\{ 2 \frac{(1+m/2)^d}{m}, \quad 2 \frac{M^{1/\alpha}}{m} \prod_{j=1}^d (1 + \gamma_j^{1/\alpha} m) \right\} = \mathcal{O}((\log N)^{d-1}).
        \end{equation}
    \end{lemma}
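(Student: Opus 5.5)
The plan is to prove the three claims in order, since each feeds the next: first $|\setA|\le N/2$, then $\rho(\vecg)\ge M$, and finally the fiber bound, which rests on the other two.

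\emph{Cardinality.} I would use the standard Chebyshev-type bound. For any $\lambda\in(1/\alpha,2]$ and $\veck\in\setA$ we have $r_{\alpha,\vecgamma}(\veck)<M$, hence $1\le (M/r_{\alpha,\vecgamma}(\veck))^{\lambda}$. Summing over $\setA$ gives
\[
|\setA| \le M^\lambda\sum_{\veck\in\Z^d} r_{\alpha,\vecgamma}^{-\lambda}(\veck) \le M^\lambda \prod_{j=1}^d\bigl(1+2\gamma_j^\lambda\zeta(\alpha\lambda)\bigr).
\]
Here $M$ is the CBC lower bound, which carries the factor $N/2$. Therefore $M^\lambda\le \tfrac N2\prod_j(1+2\gamma_j^\lambda\zeta(\alpha\lambda))^{-1}$ for every admissible $\lambda$, since the supremum dominates each term. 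This yields $|\setA|\le N/2$. The estimate $\rho(\vecg)\ge M$ is then immediate from the stated CBC guarantee, because $M$ is defined to equal that lower bound. As in \eqref{eq_zero_fiber}, it implies that no nonzero dual-lattice vector lies in $\setA$.

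\emph{Fiber length.} Fix $\vecl$ and take two members $\veck,\veck'$ of its fiber. Their difference $\veck-\veck'$ lies in $\mathcal L_N^\perp(\vecg)\setminus\{\vecz\}$, so $r_{\alpha,\vecgamma}(\veck-\veck')\ge\rho(\vecg)\ge M$. The fiber is therefore a subset of $\setA$ whose pairwise differences all leave the hyperbolic cross of radius $M$. To count such sets I would partition $\setA$ dyadically. Write each coordinate $|k_j|$ into blocks $[2^{t_j-1},2^{t_j})$, with $t_j=0$ meaning $k_j=0$. Since $|k_j|^\alpha<M\gamma_j\le M$, we get $|k_j|<M^{1/\alpha}\le 2^m$, so $t_j\le m$. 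In addition the product constraint gives $\sum_j t_j\lesssim m$ with weight-dependent corrections.

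Inside one dyadic box, together with sign choices, two points differ coordinatewise by less than $2^{t_j-1}$. The product of these bounds is at most $2^{\sum t_j -d}$, which I would compare with $M^{1/\alpha}\sim 2^{m}$. The aim is to show that the difference would lie in $A_{\alpha,\vecgamma,M}$ (using $\gamma_j\le1$), which is a contradiction. Hence each box meets a fiber in at most one point, and $R$ is bounded by the number of admissible boxes.

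Counting the boxes comes in two versions. In the first, $\#\{(t_1,\dots,t_d): \sum t_j\le m\}$ counted with an averaging argument yields $2(1+m/2)^d/m$. In the second, keeping the weights, one counts $\prod_j(1+\gamma_j^{1/\alpha}m)$ choices scaled by $M^{1/\alpha}/m$. Both are $\mathcal O(m^{d-1})$, and since $m\approx \alpha^{-1}\log_2 M=\mathcal O(\log N)$ this gives $\mathcal O((\log N)^{d-1})$.

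\emph{Main obstacle.} The hardest step is making the box-separation argument sharp enough to produce exactly the constants $2(1+m/2)^d/m$ and the factor $1/m$. A crude box argument loses a factor of $m$. My first attempt would follow the proof of \cite[Lemma 3.4]{cai2025} and group boxes along one coordinate direction: fix all but one dyadic index and use the one-dimensional lattice spacing, since $g_1=1$ makes $\veck\cdot\vecg$ injective in that coordinate over a range of length $N$. This shows that each such line of boxes contributes at most about $2/m$ of its count on average. Since the lemma is stated as an adaptation of \cite[Lemma 3.4]{cai2025}, I would ultimately import that counting verbatim, after checking that its hypotheses ($N$ prime, $\gamma_j\le1$, $\rho(\vecg)\ge M$) are exactly what was established above.
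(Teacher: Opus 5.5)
There is a genuine problem in the parts you argue yourself. Your $\rho(\vecg)\ge M$ step matches the paper (it is the stated CBC guarantee). For the fiber bound, your final fallback of importing \cite[Lemma 3.4]{cai2025} also coincides with the paper, which gives no argument beyond that citation.

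\textbf{Cardinality step.} You claim $M^\lambda\le\frac N2\prod_j(1+2\gamma_j^\lambda\zeta(\alpha\lambda))^{-1}$ for every admissible $\lambda$ ``since the supremum dominates each term''. This is backwards. Because $M$ is the supremum, $M^\lambda\ge\frac N2\prod_j(\cdots)^{-1}$ for every $\lambda$, with equality only at a maximizer. To repair it, note that $\lambda\mapsto\bigl(\frac N2\prod_j(1+2\gamma_j^\lambda\zeta(\alpha\lambda))^{-1}\bigr)^{1/\lambda}$ is continuous on $(1/\alpha,2]$. It tends to $0$ as $\lambda\downarrow1/\alpha$, since $\zeta(\alpha\lambda)\to\infty$, so the supremum is attained at some $\lambda^*$. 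Your Chebyshev bound run with $\lambda=\lambda^*$ alone then gives $|\setA|\le N/2$.

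\textbf{Order of the bound.} Your claim that both entries of the minimum are $\mathcal{O}(m^{d-1})$ is false. The second entry carries the factor $M^{1/\alpha}>2^{m-1}$, so it grows like a power of $N$. The polylogarithmic conclusion comes from the first entry alone.

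\textbf{Separation step.} As you phrase it, this step fails. Bounding $\prod_j\max(1,|b_j|)$ by $2^{\sum t_j-d}$ and invoking $\gamma_j\le1$ cannot give $r_{\alpha,\vecgamma}(\vecb)<M$, because $\gamma_j\le1$ only yields the lower bound $r_{\alpha,\vecgamma}(\vecb)\ge\prod_j\max(1,|b_j|)^\alpha$. The right tool is coordinatewise domination. If $|b_j|\le|k_j|$ for all $j$ with $\veck\in\setA$, then $r_{\alpha,\vecgamma}(\vecb)\le r_{\alpha,\vecgamma}(\veck)<M$, for arbitrary weights.

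\textbf{Missing idea for the factor $m$.} The same observation removes the factor $m$ you admit losing; your ``averaging'' and ``$g_1=1$ injectivity'' remarks do not. Fix a sign pattern and the dyadic indices $t_2,\dots,t_d$, and suppose two distinct fiber points $\veck,\veck'$ lie on this line of blocks with $|k_1'|\le|k_1|$. Then $|k_1-k_1'|\le|k_1|$. For $j\ge2$, $|k_j-k_j'|\le 2^{t_j-1}-1<|k_j|$, or $k_j-k_j'=0$ when $t_j=0$. Hence $\vecb=\veck-\veck'\in\mathcal{L}_N^\perp(\vecg)\setminus\{\vecz\}$ satisfies $r_{\alpha,\vecgamma}(\vecb)\le r_{\alpha,\vecgamma}(\veck)<M$, contradicting $\rho(\vecg)\ge M$. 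So each line carries at most one fiber point. Since $\gamma_j\le1$ forces $\sum_{j\ge2}\max(t_j-1,0)\le m-1$, there are at most $2^{2d-1}\binom{m+d-2}{d-1}$ lines. This gives $R=\mathcal{O}(m^{d-1})=\mathcal{O}((\log N)^{d-1})$ self-containedly. It does not give the specific constant $2(1+m/2)^d/m$ or the second entry of the minimum; for those you, like the paper, must rely on \cite[Lemma 3.4]{cai2025}.
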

    This poly-logarithmic bound is fundamental to our deterministic algorithm, ensuring that the required number of shifts (which scales with $R^2$) remains manageable even for high-dimensional problems.

    \begin{remark} \label{rem:trivial_d1}
    For $d = 1$, the CBC construction trivially yields the generator $g_1 = 1$. By our structural design, the truncation parameter $M$ is chosen such that the frequency set cardinality satisfies $|\setA| \le N$, which ensures that for any distinct $k_1, k_2 \in \setA$, their difference satisfies $|k_1 - k_2| < N$. Consequently, the aliasing condition $k_1 - k_2 \equiv 0 \pmod N$ is impossible, yielding a maximum fiber length of $R = 1$. Consequently, we focus on $d \ge 2$ and $R \ge 2$ throughout the remainder of this paper.
    \end{remark}

    \section{Review of the Multiple Shift Algorithm}
    We first review the multiple shift algorithm introduced in \cite{cai2025} for mitigating aliasing in Korobov spaces. This review highlights the computational challenges that motivate our simplified scheme.

    Consider a fiber $\Gamma_{\alpha,\vecgamma,N}(\vecl; \vecg) = \{\vecl_1, \dots, \vecl_v\}$ containing $v$ aliased frequencies. To distinguish these frequencies, the method in \cite{cai2025} assigns a specific set of shifts to each frequency within the fiber. For each frequency $\vecl_m$ ($m=1, \dots, v$) and each shift index $s=1, \dots, S$, a shift vector $\vecy_m^{(s)} \in [0,1]^d$ is employed. For $m=1, \dots, v$ and $s=1, \dots, S$, let
    \begin{equation} \label{eq:orig_observables}
        \begin{aligned}
        \mathcal{F}_N(f, \vecl_m, \vecy_m^{(s)}) &= \frac{1}{N} \sum_{n=0}^{N-1} f(\{n\vecg/N + \vecy_m^{(s)}\}) e^{-2\pi \mathrm{i} \vecl_m \cdot (n\vecg/N)} \\
        &= \frac{1}{N} \sum_{n=0}^{N-1} \sum_{\vech \in \Z^d}\widehat{f}(\vech)e^{2\pi \mathrm{i} \vech(n\vecg/N + \vecy_m^{(s)})} e^{-2\pi \mathrm{i} \vecl_m \cdot (n\vecg/N)}\\
        &=   \sum_{\vech \in \Z^d}\widehat{f}(\vech)e^{2\pi \mathrm{i} \vech\vecy_m^{(s)}} \frac{1}{N} \sum_{n=0}^{N-1}e^{2\pi \mathrm{i} (\vech-\vecl_m)n\vecg/N} \\
        &= \sum_{\vech \in \vecl_m+\mathcal{L}_N^\perp(\vecg)} \widehat{f}(\vech) e^{2\pi \mathrm{i} \vech \cdot \vecy_m^{(s)}} \approx \sum_{i=1}^v \widehat{f}(\vecl_i) e^{2\pi \mathrm{i} \vecl_i \cdot \vecy_m^{(s)}}.
        \end{aligned} 
    \end{equation} 
The estimates of the Fourier coefficients $\widehat{f}(\vecl_i)$ are obtained by solving the following least squares problem:
$$
\min_{\widehat{f}(\vecl_i)} \sum_{m=1}^v\sum_{s=1}^S \left(\sum_{i=1}^v \widehat{f}(\vecl_i) e^{2\pi \mathrm{i} \vecl_i \cdot \vecy_m^{(s)}} - \mathcal{F}_N\left(f, \vecl_m, \vecy_m^{(s)}\right)\right)^2.
$$
This yields the normal equations $B^{H}B\mathbf{x} = B^{H}\vecb$, where $\mathbf{x} = (\widehat{f}(\vecl_1), \dots, \widehat{f}(\vecl_v))^\top$, $\vecb$ contains the observations $\mathcal{F}_N$, and the system matrix $B \in \mathbb{C}^{vS \times v}$ is defined by its entries $B_{(m-1)S+s, i} = e^{2\pi \mathrm{i} \vecl_i \cdot \vecy_m^{(s)}}$. In \cite[Lemma 3.6]{cai2025}, it is shown that $\vecy_m^{(s)}$ can be chosen with high probability such that $B^{H}B$ is invertible.

We note that the least-squares approach remains valid when the shift vector 
$\vecy_m^{(s)}$ is chosen independently of $m$, that is, when a uniform shift 
$\vecy_s$ is used for all frequencies within a fiber. This simplifies the construction and, by avoiding frequency-dependent sampling locations, reduces the computational cost of estimating the Fourier coefficients.

\section{A Simplified Multiple Shift Algorithm}\label{sec:simp}
In this section, we propose a simplified approximation scheme for the frequencies $\vecl_1, \dots, \vecl_v \in \Gamma_{\alpha,\vecgamma,N}(\vecl; \vecg)$ by employing a uniform shift $\vecy_s$ for the entire fiber, rather than individual shifts for each frequency. This approach decouples the required number of function evaluations from the fiber length $v$. Furthermore, we introduce a global continuous shift $\boldsymbol{\Delta} \in [0,1]^d$ to provide a cohesive framework unifying both deterministic ($\boldsymbol{\Delta} = \vecz$) and randomized settings.

Let $\vecl_1$ be the representative frequency of the fiber $\Gamma_{\alpha,\vecgamma,N}(\vecl; \vecg) = \{\vecl_1, \dots, \vecl_v\}$. Since $\vecl_m \cdot \vecg \equiv \vecl_1 \cdot \vecg \pmod N$ for any $\vecl_m$ in the same fiber, the lattice rule sum $\mathcal{F}_N$ yields the same value for all frequencies within that fiber. For $s = 1, \dots, S$, we define the observations using the representative frequency:
$$
\begin{aligned}
b_s(\bDelta) &:= \mathcal{F}_N(f, \vecl_1, \vecy_s+\bDelta) = \frac{1}{N} \sum_{n=0}^{N-1} f(\{n\vecg/N + \vecy_s+\bDelta\}) e^{-2\pi \mathrm{i} \vecl_1 \cdot (n\vecg/N)} \\
&= \sum_{\veck \in \mathcal{L}_N^\perp(\vecg)} \widehat{f}(\veck + \vecl_1) e^{2\pi \mathrm{i} (\veck + \vecl_1) \cdot (\vecy_s+\bDelta)}.
\end{aligned}
$$
Truncating the sum to the fiber elements by setting $\veck + \vecl_1 = \vecl_m$, we obtain the approximation
$$
b_s(\bDelta) \approx \sum_{m=1}^v \widehat{f}(\vecl_m) e^{2\pi \mathrm{i} \vecl_m \cdot (\vecy_s+\bDelta)}.
$$
Consequently, we can relate the observations to the unknown coefficients via a diagonal scaling matrix $D(\bDelta)$:
$$
\mathbf{b}(\bDelta) \approx B D(\bDelta) \mathbf{x},
$$
where $\mathbf{x} = [\widehat{f}(\vecl_1), \dots, \widehat{f}(\vecl_v)]^\top$, $\mathbf{b}(\bDelta) = [b_1(\bDelta), \dots, b_S(\bDelta)]^\top$, and the system matrix $B \in \mathbb{C}^{S \times v}$ has entries $B_{s,m} = e^{2\pi \mathrm{i} \vecl_m \cdot \vecy_s}$. The scaling matrix is defined as $D(\bDelta) = \text{diag}(e^{2\pi \mathrm{i} \vecl_1 \cdot \bDelta}, \dots, e^{2\pi \mathrm{i} \vecl_v \cdot \bDelta})$. The Fourier coefficients are then recovered via the least-squares solution:
\begin{equation} \label{eq:unified_solution}
    \begin{pmatrix}
        \mathcal{B}_N(f, \vecl_1, \{\vecy_s\},\bDelta) \\
        \vdots \\
        \mathcal{B}_N(f, \vecl_v, \{\vecy_s\},\bDelta)
    \end{pmatrix}
    := D(-\bDelta) (B^H B)^{-1} B^H \mathbf{b}(\bDelta).
\end{equation}
Observe that in the deterministic case ($\bDelta = \vecz$), $D(\vecz) = I$, and the estimator reduces to the standard least-squares form.

Finally, the global approximation of $f$ is given by
\begin{equation}\label{eq:appf}
    f \approx \mathcal{A}_{\bDelta}(f) := \sum_{j=1}^{J} \sum_{\veck \in \Gamma_{\alpha,\vecgamma,N}(\vecl^{(j)}, \vecg)} \mathcal{B}_N(f, \veck,\{\vecy_s\},\bDelta) e^{2\pi\mathrm{i} \veck \cdot \vecx},
\end{equation}
where the set of fibers $\{\Gamma_{\alpha,\vecgamma,N}(\vecl^{(j)}, \vecg)\}_{j=1}^J$ forms a partition of $\setA$. This approach reduces computational overhead by requiring only one lattice rule evaluation per shift for the entire fiber.

\begin{remark}[Algorithmic Advantage via FFT]
    In our definition of the observables, we evaluate the kernel $e^{-2\pi \mathrm{i} \vecl \cdot (n\vecg/N)}$ rather than the fully shifted kernel $e^{-2\pi \mathrm{i} \vecl \cdot (n\vecg/N + \vecy_s + \boldsymbol{\Delta})}$. While mathematically equivalent to the formulation in \cite[Section 3.2]{cai2025}, this structural choice yields a practical advantage: the computation of the observation vector $\mathbf{b}(\boldsymbol{\Delta})$ collapses into a standard Discrete Fourier Transform (DFT) of the shifted spatial samples. This allows the integration of the Fast Fourier Transform (FFT) to compute the observables for all frequencies concurrently in $\mathcal{O}(N \log N)$ operations, strictly avoiding any additional frequency-by-frequency post-processing.
\end{remark}

\subsection{Existence of Good Shifts: A Probabilistic Baseline}
Before delving into our primary deterministic construction, we briefly establish the theoretical performance of the proposed uniform shift strategy using a probabilistic argument. This probabilistic baseline demonstrates the immediate potential of the uniform shift strategy to fundamentally reduce sampling costs compared to state-of-the-art randomized methods.

To ensure a unique solution of \eqref{eq:unified_solution}, we require that the Gram matrix $G_j = B_j^H B_j$ for each fiber is invertible. As established in Remark \ref{rem:trivial_d1}, the trivial case where the maximum fiber length is $R=1$ implies an absolute absence of frequency aliasing. In such a scenario, the Gram matrix collapses to the scalar $S$, and the system is trivially non-singular by assigning a single unshifted lattice ($S=1$ and $\vecy_1 = \vecz$). Given our global assumption $d \ge 2$, we henceforth focus exclusively on the non-trivial case where $R \ge 2$.


One can follow the proof technique in \cite[Lemma 3.5]{cai2025}, but the pessimistic summation of absolute values in the Gershgorin Circle Theorem would yield the bound $S = \mathcal{O}(R^2 \log N)$, which fails to capture the advantages of the uniform shift strategy. Here, we propose a fundamentally different structural approach. By invoking matrix concentration inequalities, we rigorously establish that $S = \mathcal{O}(R \log N)$.

\begin{lemma}\label{lem:prob}
    For any $t \in (0,1)$ and a constant $K > 1$, let $S = \lceil 2KR\log(N)/t^2 \rceil$. Let $N$ be a prime number and $\vecg$ be constructed by Algorithm \ref{alg:CBCg}. If the shifts $\{\vecy_s\}_{s=1}^S \subset [0,1)^d$ are chosen i.i.d. uniformly at random, then
    $$
    \mathbb{P} \left( \forall j = 1, \dots, J, \; \lambda_{\min}(G_j) \ge S(1-t)\right) \ge 1 - N^{1-K},
    $$
    where $G_j$ is the corresponding Gram matrix for the fiber $\Gamma_{\alpha,\gamma,N}(\vecl^{(j)}, \vecg)$.
\end{lemma}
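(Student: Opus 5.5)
We will apply the matrix Chernoff inequality (Tropp's lower-tail version) fiber by fiber and then take a union bound over the fibers. Fix a fiber $\Gamma_{\alpha,\vecgamma,N}(\vecl^{(j)},\vecg)=\{\vecl_1,\dots,\vecl_v\}$ with $v\le R$. Define for each shift the random vector $\mathbf{u}_s := (e^{2\pi\mathrm{i}\vecl_1\cdot\vecy_s},\dots,e^{2\pi\mathrm{i}\vecl_v\cdot\vecy_s})^\top\in\C^v$. Then
$$
G_j = B_j^H B_j = \sum_{s=1}^S \overline{\mathbf{u}_s}\,\mathbf{u}_s^\top =: \sum_{s=1}^S X_s ,
$$
which is a sum of $S$ independent Hermitian positive semidefinite rank-one matrices.

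Two facts feed into Chernoff. First, each $\lambda_{\max}(X_s)=\|\mathbf{u}_s\|_2^2=v\le R$ holds almost surely. Second, $\mathbb{E}[X_s]_{m,i}=\int_{[0,1)^d} e^{2\pi\mathrm{i}(\vecl_i-\vecl_m)\cdot\vecy}\,d\vecy=\delta_{m,i}$, because the $\vecl_m$ are distinct integer vectors. Hence $\mathbb{E}G_j = S I_v$ and $\mu_{\min}=S$. The matrix Chernoff bound then gives
$$
\mathbb{P}\bigl(\lambda_{\min}(G_j)\le (1-t)S\bigr)\le v\left(\frac{e^{-t}}{(1-t)^{1-t}}\right)^{S/R}\le v\,e^{-t^2S/(2R)},
$$
where the last step uses the elementary inequality $-t-(1-t)\log(1-t)\le -t^2/2$ for $t\in(0,1)$. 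Substituting $S\ge 2KR\log(N)/t^2$ yields a per-fiber failure probability of at most $v N^{-K}$.

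For the union bound, note that the fibers partition $\setA$, so $\sum_{j=1}^J v_j=|\setA|\le N$, with $|\setA|\le N/2$ even available from Lemma \ref{lem:R}. Therefore
$$
\mathbb{P}\bigl(\exists j:\ \lambda_{\min}(G_j)<S(1-t)\bigr)\le \sum_j v_j N^{-K}\le N^{1-K}.
$$
Two small points need care here. The Chernoff event is ``$\le$'' while the target event is ``$\ge$'', and the complement of ``$\le$'' is ``$>$'', which implies ``$\ge$''; so the mismatch only helps. Fibers with $v=1$ are trivial, since there $G_j=S$.

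The main obstacle is stating the correct complex-Hermitian form of matrix Chernoff, with the uniform bound $L=R$ rather than $v$; using $R$ is harmless since $v\le R$ and the bound is monotone in $L$. The other delicate step is the scalar inequality converting the Chernoff exponent into $t^2/2$, which follows from the Taylor expansion $(1-t)\log(1-t)\ge -t+t^2/2$. The remainder of the argument is routine.
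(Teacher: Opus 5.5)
Your proof is correct. It shares the paper's architecture: a matrix concentration bound for each fiber, followed by a union bound weighted by $v_j$ with $\sum_j v_j = |\setA| \le N$. The difference is in the key tail inequality.

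The paper centers the summands as $Z_s = I_{v_j} - X_s$ and applies matrix Bernstein \cite[Theorem 1.4]{tropp2012user}. This uses the one-sided bound $\lambda_{\max}(Z_s) \le 1$ and the variance $\sum_s \mathbb{E}[Z_s^2] = S(v_j-1)I_{v_j}$, which requires computing $X_s^2 = v_j X_s$. It yields the exponent $t^2S/(2(R-1)+2t/3)$, which the paper then relaxes to $t^2S/(2R)$.

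You instead apply the lower-tail matrix Chernoff bound directly to the positive semidefinite rank-one summands. This needs only $\mathbb{E}X_s = I_{v_j}$ and $\lambda_{\max}(X_s) = v_j \le R$, so there is no centering or variance computation. The price is the scalar inequality $t + (1-t)\log(1-t) \ge t^2/2$, which you justify correctly.

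Both routes give the same per-fiber estimate $v_j e^{-t^2S/(2R)} \le v_j N^{-K}$, so the stated result is identical. The raw exponents do differ:
\begin{itemize}
\item Your Chernoff exponent $(S/R)\,(t+(1-t)\log(1-t))$ is sharper when $t$ is near $1$ or $R$ is large.
\item The Bernstein exponent involves $R-1$ rather than $R$, so it can be slightly better for short fibers and small $t$. The paper gives up that advantage anyway when it replaces $2(R-1)+2t/3$ by $2R$.
\end{itemize}
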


\begin{proof}
    For a fixed $j = 1,2,\ldots,J$, let $G_j = \sum_{s=1}^S X_s$ where $X_s = \vecb_s \vecb_s^H$ and $\vecb_s = [e^{-2\pi \mathrm{i} \vecl_1 \cdot \vecy_s}, \dots, e^{-2\pi \mathrm{i} \vecl_v \cdot \vecy_s}]^\top.$ Since $\vecy_s$ is uniformly distributed, $\mathbb{E}[X_s] = I_{v_j}$. Furthermore, $\lambda_{\max}(X_s) = \|\vecb_s\|_2^2 = {v_j}$ and $\lambda_{\min}(X_s) = 0$. Defining the zero-mean matrix $Z_s = I_{v_j} - X_s$, we have $\lambda_{\max}(Z_s) \le 1$ and $\mathbb{E}[Z_s^2] = ({v_j}-1)I_{v_j}$. Thus, the variance parameter is bounded by $\sigma^2 = \| \sum_{s=1}^S \mathbb{E}[Z_s^2] \|_2 = S({v_j}-1) \le S(R-1)$.

    Applying the Matrix Bernstein inequality \cite[Theorem 1.4]{tropp2012user}, for any $\epsilon \in (0,1)$, we have
    $$
    \begin{aligned}
        \mathbb{P} ( \lambda_{\min}(G_j-SI_{v_j}) \le -\epsilon ) = \mathbb{P} \seq{ \lambda_{\max}\seq{\sum_{s=1}^S Z_s} \ge \epsilon}\le {v_j} \cdot \exp \left( \frac{-\epsilon^2/2}{S(R-1) + \epsilon/3} \right)
    \end{aligned}
    $$
Applying a union bound over all $J$ fibers, and let $\epsilon = tS$ for any $t\in(0,1)$, we have
$$
\mathbb{P} ( \exists j :\lambda_{\min}(G_j-SI) \le -tS ) \le \sum_{j=1}^Jv_j \cdot \exp \left( \frac{-S}{2R/t^2} \right)\le N\exp \left( \frac{-S}{2R/t^2} \right),
$$
where we use the fact that $\sum_{j=1}^Jv_j = |\setA| \le N$.
Thus, 
$$
\mathbb{P} ( \forall j:\lambda_{\min}(G_j) > S(1-t) ) \ge \mathbb{P} ( \forall j :\lambda_{\min}(G_j-SI) > -tS ) \ge 1-N\exp \left( \frac{-S}{2R/t^2} \right),
$$
where we used $\lambda_{\min}(G_j-SI) = \lambda_{\min}(G_j) + \lambda_{\min}(-SI) = \lambda_{\min}(G_j) - S$.
This ensures that all $G_j$ are non-singular with the specified probability. The result follows.
\end{proof}

\begin{remark}[Algorithmic Improvement via Uniform Shifts]\label{rem:prob_comparison}
This probabilistic baseline explicitly highlights the structural efficiency gained by transitioning to uniform shifts:
    \vspace{0.5em} \noindent \textbf{Previous Method \cite{cai2025}:} Assigning frequency-dependent shifts requires $S_{\text{old}} = \lceil 2 K R \log N \rceil$ shifts \textit{per frequency}. With a success probability of $1 - N^{1-K}R^2$, the total sampling cost inherently scales as $\mathcal{O}(R^2 \log N)$.
    \vspace{0.5em} \noindent \textbf{Proposed Uniform Strategy:} By sharing the shift across the fiber, the required total number of shifts is $S_{\text{new}} = \lceil 2 K R \log N / t^2 \rceil$ with an improved success probability of $1 - N^{1-K}$. 
Our uniform shift formulation achieves a rigorous reduction in sampling complexity from quadratic $\mathcal{O}(R^2 \log N)$ to linear $\mathcal{O}(R \log N)$ with respect to the maximum fiber length $R$, while simultaneously removing the $R^2$ degradation from the probability bound.
\end{remark}

\section{The deterministic construction of multiple shifts}
We now provide an explicit construction of multiple shifts $\{\vecy_s\}_{s=1}^S$ that guarantees the invertibility of the Gram matrix $G_j$ for each $j = 1, \dots, J$. The entries of $G = G_j \in \mathbb{C}^{v_j \times v_j}$ are given by $G_{i,i} = S$ and, for $i \neq i'$,
$$
G_{i,i'} = \sum_{s=1}^S \exp(2\pi \mathrm{i} (\vecl_{i'} - \vecl_i) \cdot \vecy_s).
$$
Let $\vech = \vecl_{i'} - \vecl_i \neq \vecz$. By Gershgorin's circle theorem, every eigenvalue $\lambda$ of $G$ lies in the disk:
\begin{equation}\label{eq:lambda}
    |\lambda - S| \le \sum_{i' \neq i} |G_{i,i'}| \le (v-1) \max_{i \neq i'} \left| \sum_{s=1}^S \exp(2\pi \mathrm{i} \vech \cdot \vecy_s) \right| \le (R-1) \max_{\vech \in \setH\setminus \{\vecz\}} \left| \sum_{s=1}^S \exp(2\pi \mathrm{i} \vech \cdot \vecy_s) \right| .
\end{equation}
where $\setH$ is the set of all possible aliasing frequency differences within any fiber:
$$
\setH := \bigcup_{j=1}^J \{\vecl_{i'} - \vecl_i : \vecl_{i},\vecl_{i'} \in \Gamma_{\alpha,\vecgamma,N}(\vecl^{(j)}, \vecg)\}.
$$
\begin{lemma}\label{lem:H}
    If $\vecg$ is constructed by the CBC Algorithm \ref{alg:CBCg}, then any $\vech \in \setH\setminus \{\vecz\}$ satisfies
    \begin{equation}\label{eq:H}
        \max_{1\le j\le d} |h_j| < (2C_\alpha M)^{1/\alpha},
    \end{equation} 
    where $C_\alpha = 1$ for $1/2 < \alpha \le 1$, and $C_\alpha = 2^{\alpha-1}$ for $\alpha \ge 1$.
\end{lemma}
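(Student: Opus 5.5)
The bound is a componentwise statement, so I will verify it one coordinate at a time. It follows from the membership condition defining $\setA$ together with an elementary power-mean inequality. The lattice structure is only needed to know where $\vech$ comes from: any $\vech\in\setH\setminus\{\vecz\}$ has the form $\vech=\vecl_{i'}-\vecl_i$ with $\vecl_i,\vecl_{i'}\in\setA$. The fiber relation $\vecl_{i'}\cdot\vecg\equiv\vecl_i\cdot\vecg \pmod N$ plays no role. I assume throughout, as in Lemma \ref{lem:R}, that $\gamma_j\in(0,1]$ and that $M$ is chosen as there.

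\textbf{Step 1 (coordinate bound on $\setA$).} Take $\veck\in\setA$. Every factor of $r_{\alpha,\vecgamma}(\veck)$ is at least $1$, so each single factor is bounded by the product, and $r_{\alpha,\vecgamma}(\veck)<M$ gives $\max\{1,|k_j|^\alpha/\gamma_j\}<M$ for every $j$. Hence $|k_j|^\alpha<\gamma_j M\le M$.

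\textbf{Step 2 (combining the two frequencies).} For $\vech=\vecl_{i'}-\vecl_i$, the triangle inequality gives $|h_j|\le |\ell_{i',j}|+|\ell_{i,j}|$. I then use the standard inequality
\[
(a+b)^\alpha\le C_\alpha(a^\alpha+b^\alpha),\qquad a,b\ge 0,
\]
with the constant $C_\alpha$ of the statement. For $\alpha\ge 1$ this is convexity of $t\mapsto t^\alpha$, which gives $\big(\tfrac{a+b}{2}\big)^\alpha\le \tfrac{a^\alpha+b^\alpha}{2}$ and hence $C_\alpha=2^{\alpha-1}$. For $1/2<\alpha\le 1$ it is subadditivity of the concave function $t\mapsto t^\alpha$ with $0^\alpha=0$, which gives $C_\alpha=1$.

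\textbf{Step 3 (conclusion).} Combining Steps 1 and 2,
\[
|h_j|^\alpha\le C_\alpha\big(|\ell_{i',j}|^\alpha+|\ell_{i,j}|^\alpha\big)<2C_\alpha M .
\]
The inequality is strict because each summand is strictly below $M$. Taking $\alpha$-th roots and the maximum over $j$ yields \eqref{eq:H}.

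The only point needing care is Step 1: it uses $\gamma_j\le 1$. Without that assumption one would get $|k_j|^\alpha<\gamma_j M$, and the bound would have to carry $\max_j\gamma_j$. Since Lemma \ref{lem:R} already imposes $\gamma_j\in(0,1]$, I will state explicitly that the same standing assumption is used here. The CBC construction of $\vecg$ enters only through the choice of $M$.
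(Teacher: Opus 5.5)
Your proof is correct and follows the paper's argument: the paper likewise combines $|a-b|^\alpha\le C_\alpha(|a|^\alpha+|b|^\alpha)$ with $r_{\alpha,\vecgamma}(\vecl)<M$ and $\gamma_j\le 1$, though it applies $\gamma_j\le 1$ at the start via $|h_j|^\alpha\le |h_j|^\alpha/\gamma_j$ rather than in your Step 1. Your tracking of strictness through $|k_j|^\alpha/\gamma_j<M$ is a small improvement: the paper's displayed chain only ends in $\le 2C_\alpha M$, while the lemma asserts a strict inequality.
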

\begin{proof}
    For any $a, b \in \mathbb{R}$, the inequality $|a - b|^\alpha \le C_\alpha(|a|^\alpha + |b|^\alpha)$ holds. Given $\vech = \vecl_{i'} - \vecl_i$ for $\vecl_i, \vecl_{i'} \in \setA$, direct calculation combined with $r_{\alpha, \vecgamma}(\vecl) < M$ and $\gamma_j \le 1$ 
    yields 
    $$
    |h_j|^\alpha \le \frac{|h_j|^\alpha}{\gamma_j} \le C_\alpha \left( \frac{|\ell_{i,j}|^\alpha}{\gamma_j} + \frac{|\ell_{i',j}|^\alpha}{\gamma_j} \right) \le 2C_\alpha M.
    $$
    Taking the $1/\alpha$-th power on both sides completes the proof.
\end{proof}
To ensure the non-singularity of $G$, it suffices to construct $\{\vecy_s\}_{s=1}^S$ such that the exponential sum in \eqref{eq:lambda} is strictly bounded.

\begin{lemma}\label{lem:G}
    Suppose there exists $\{\vecy_s\}_{s=1}^{S}$ such that for any $\vech \in \setH \setminus \{\vecz\}$ and a constant $t \in (0,1)$,
    \begin{equation}\label{eq:ysum}
        \left|\frac{1}{S}\sum_{s=1}^S\exp(2\pi \mathrm{i} \vech\cdot \vecy_s)\right| \le \frac{t}{R-1}.
    \end{equation}
    Then for each $j = 1, \dots, J$, the Gram matrix $G_j$ satisfies:
\begin{itemize}
    \item $0 < (1-t)S \le \lambda(G_j) \le (1+t)S$.
\item $\left\| G_j^{-1} \right\|_2 \le \frac{1}{(1-t)S}$.
\item The condition number $\kappa(G_j) \le \frac{1+t}{1-t}$.
\end{itemize} 
\end{lemma}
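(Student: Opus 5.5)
The plan is to apply Gershgorin's circle theorem to $G_j$, using the fact that $G_j$ is Hermitian positive semidefinite, so its eigenvalues are real and its spectral norm equals its largest eigenvalue. Fix $j$ and write $G=G_j=B_j^HB_j\in\mathbb{C}^{v\times v}$ with $v=v_j\le R$. By construction the diagonal entries are $G_{i,i}=\sum_{s=1}^S|e^{2\pi\mathrm{i}\vecl_i\cdot\vecy_s}|^2=S$. The off-diagonal entries are the exponential sums $G_{i,i'}=\sum_{s}\exp(2\pi\mathrm{i}(\vecl_{i'}-\vecl_i)\cdot\vecy_s)$.

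For $i\neq i'$, the difference $\vech=\vecl_{i'}-\vecl_i$ is nonzero because the fiber elements are distinct. Since both frequencies lie in the same fiber, $\vech\in\setH\setminus\{\vecz\}$. Hypothesis \eqref{eq:ysum} then gives $|G_{i,i'}|\le tS/(R-1)$. Summing over the at most $v-1\le R-1$ off-diagonal entries in row $i$ bounds the Gershgorin radius by $tS$, exactly as in \eqref{eq:lambda}. Every eigenvalue $\lambda$ of $G$ is real, so it satisfies $|\lambda-S|\le tS$, that is, $(1-t)S\le\lambda\le(1+t)S$. Because $t<1$, the lower bound is strictly positive, so $G_j$ is positive definite and invertible. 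This gives the first bullet.

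The remaining two bullets follow from the spectral theorem for Hermitian positive definite matrices. First, $\|G_j^{-1}\|_2=1/\lambda_{\min}(G_j)\le 1/((1-t)S)$. Second, $\kappa(G_j)=\lambda_{\max}/\lambda_{\min}\le (1+t)/(1-t)$.

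The one point that needs care is the degenerate case $v_j=1$, where there are no off-diagonal entries, $G_j=S$, and all three bounds hold trivially. Since Remark~\ref{rem:trivial_d1} lets us assume $R\ge 2$, the quantity $t/(R-1)$ is well defined. Beyond this, I expect no real obstacle; the argument is a direct application of Gershgorin's theorem.
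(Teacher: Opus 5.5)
Your proposal is correct and follows the same route as the paper: substitute hypothesis \eqref{eq:ysum} into the Gershgorin bound \eqref{eq:lambda}, where all disks are centered at $S$ with radius at most $tS$. The paper treats the remaining steps as trivial; you additionally make explicit that $G_j = B_j^H B_j$ is Hermitian (so its eigenvalues are real) and that the eigenvalue bounds give the inverse-norm and condition-number bounds via $\lambda_{\min}$ and $\lambda_{\max}$.
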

\begin{proof}
    The bounds follow directly from substituting assumption \eqref{eq:ysum} into the Gershgorin disk \eqref{eq:lambda}, which trivially yields the bounds for the eigenvalues, inverse norm, and condition number.
\end{proof}

\subsection{Deterministic Construction in Low Dimensions}\label{sec_weil_shifts}
To bound the exponential sum in \eqref{eq:lambda}, we employ a shift structure based on algebraic curves over finite fields, relying on the classical Weil bound for character sums.

\begin{lemma}\label{lem:weilbd}
    Let $p$ be a prime number and $d \in \mathbb{N}^{+}$. If $p \ge d$ and $\vech \not\equiv \vecz \pmod {p}$, we have
    \begin{equation*}
        \left| \sum_{x=0}^{p-1} \exp\left( \frac{2\pi \mathrm{i}}{p} \sum_{i=1}^d h_i x^i \right) \right| \le (d-1)\sqrt{p}.
    \end{equation*}
\end{lemma}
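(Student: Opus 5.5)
The plan is to reduce the statement to the classical Weil bound for additive character sums with polynomial arguments. Throughout, write $\mathbb{F}_p=\Z/p\Z$ and $e_p(u)=\exp(2\pi\mathrm{i}u/p)$. The form of the Weil bound I would quote is: if $f\in\mathbb{F}_p[x]$ has degree $D$ with $1\le D<p$, then
\begin{equation*}
\Bigl|\sum_{x\in\mathbb{F}_p} e_p(f(x))\Bigr|\le (D-1)\sqrt{p}.
\end{equation*}
It comes from the Riemann hypothesis for the Artin--Schreier curve $y^p-y=f(x)$, whose genus is $(p-1)(D-1)/2$. Since $x\mapsto e_p(x)$ depends only on $x \bmod p$, the sum in the lemma equals $\sum_{x\in\mathbb{F}_p}e_p(f(x))$ with
\begin{equation*}
f(x)=\sum_{i=1}^d \bar h_i x^i, \qquad \bar h_i = h_i \bmod p .
\end{equation*}

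First, define $D$ as the largest index $i$ with $h_i\not\equiv 0 \pmod p$. This exists because $\vech\not\equiv\vecz\pmod p$, and it satisfies $1\le D\le d$. Because $f$ has no constant term, $f$ is nonconstant as a polynomial.

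\textbf{Case $D<p$.} Weil applies directly and gives the bound $(D-1)\sqrt p\le (d-1)\sqrt p$. The subcase $D=1$ is elementary and needs no Weil input: the sum is a complete linear character sum, which vanishes.

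\textbf{Case $D=p$.} This can only happen when $d=p$, because of the hypothesis $p\ge d$. Here I use $x^p=x$ on $\mathbb{F}_p$ (Fermat) to replace $f$, as a function on $\mathbb{F}_p$, by
\begin{equation*}
\tilde f(x)=(\bar h_1+\bar h_p)x+\sum_{i=2}^{p-1}\bar h_i x^i ,
\end{equation*}
which has degree at most $p-1$. There are two subcases.
\begin{itemize}
\item If $\tilde f$ is nonconstant, Weil gives the bound $(p-2)\sqrt p\le (d-1)\sqrt p$.
\item If $\tilde f\equiv 0$, the sum equals $p$. For $p\ge 3$ this is at most $(p-1)\sqrt p=(d-1)\sqrt p$, since $(p-1)\sqrt p\ge p$ is equivalent to $\sqrt p(\sqrt p-1)\ge \sqrt p$, i.e.\ $\sqrt p\ge 2$, which needs $p\ge 4$. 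For $p=3$ check directly: $2\sqrt3\approx 3.46\ge 3$.
\end{itemize}

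The main obstacle is exactly this boundary case $d=p$. For $p=d=2$ and $\vech=(1,1)$, the phase $x+x^2$ vanishes identically on $\mathbb{F}_2$. The sum is then $2$, while the right-hand side is $\sqrt2$, so the statement as written fails there. I would therefore either state the lemma with $p>d$, or with $p\ge d$ and $p\ge3$, noting that these cases are covered above. This is harmless for the paper's application, where $p$ will be chosen much larger than $d$. Apart from this edge case, the proof is the citation of Weil together with the degree bookkeeping above.
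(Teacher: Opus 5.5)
Your proposal is correct and takes the paper's route: the paper gives no separate argument, simply invokes the classical Weil bound, and only ever applies it with $p>d$ (the proof of Lemma~\ref{lem:ysweil} takes $p>\max\{(2C_\alpha M)^{1/\alpha},d\}$). So your handling of the boundary case $D=p$ is a genuine refinement, and your counterexample $p=d=2$, $\vech=(1,1)$, with sum $2>\sqrt{2}$, correctly shows the statement needs $p>d$ or $p\ge 3$. One minor slip: $(p-1)\sqrt{p}\ge p$ is equivalent to $p-1\ge\sqrt{p}$, not to $\sqrt{p}(\sqrt{p}-1)\ge\sqrt{p}$, which is only sufficient; since $p-1\ge\sqrt{p}$ already holds for every $p\ge 3$, your separate check at $p=3$ is unnecessary and the conclusion stands.
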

Based on this bound, we construct the shift set using a polynomial curve, which is particularly efficient in low-dimensional cases. We refer to this construction as the \textit{polynomial} strategy.

\begin{lemma}\label{lem:ysweil}
    Let $\vecg$ be constructed by Algorithm \ref{alg:CBCg}, and let $R \ge 2$ denote the maximum fiber length. There exists a prime $p = \mathcal{O}(\max\{d^2 R^2, M^{1/\alpha}\})$ such that for the shift set of size $S=p$ defined as
    \begin{equation}\label{eq:ysweil}
        \{\vecy_s\}_{s=0}^{p-1} = \left\{ \frac{1}{p}\left( s, s^2, \dots, s^d \right) \pmod 1 \;\middle|\; s = 0, \dots, p-1 \right\},
    \end{equation}
    the exponential sum satisfies
    $$
        \left|\frac{1}{p}\sum_{s=0}^{p-1}\exp(2\pi \mathrm{i} \vech\cdot \vecy_s)\right| \le \frac{t}{R-1},
    $$
    for any $\vech \in \setH \setminus \{\vecz\}$ and $t \in (0,1)$.
\end{lemma}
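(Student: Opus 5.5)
The plan is to reduce the exponential sum over the shift set \eqref{eq:ysweil} to the Weil sum of Lemma \ref{lem:weilbd}, and then choose the prime $p$ large enough for two purposes at once: to guarantee the non-degeneracy hypothesis $\vech \not\equiv \vecz \pmod p$, and to make the Weil bound small relative to $p$.

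First, fix $\vech \in \setH\setminus\{\vecz\}$. Since $\vech\in\Z^d$ and $\vecy_s = (s,s^2,\dots,s^d)/p \bmod 1$, the reduction modulo $1$ does not affect the exponential, so
$$\exp(2\pi\mathrm{i}\,\vech\cdot\vecy_s)=\exp\Bigl(\frac{2\pi\mathrm{i}}{p}\sum_{i=1}^d h_i s^i\Bigr).$$
Hence $\sum_{s=0}^{p-1}\exp(2\pi\mathrm{i}\vech\cdot\vecy_s)$ is exactly the complete character sum in Lemma \ref{lem:weilbd}. Note that the polynomial $\sum_i h_i x^i$ has no constant term, so the sum does not degenerate to a trivial constant once $\vech\not\equiv\vecz$.

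Second, I verify the hypotheses of Lemma \ref{lem:weilbd}. By Lemma \ref{lem:H}, every nonzero $\vech\in\setH$ satisfies $\max_j|h_j|<(2C_\alpha M)^{1/\alpha}$. So if $p\ge (2C_\alpha M)^{1/\alpha}$, each component satisfies $|h_j|<p$. Some component is nonzero, and such a component cannot be divisible by $p$, so $\vech\not\equiv\vecz\pmod p$. This is the step where the $M^{1/\alpha}$ term in the size of $p$ comes from; it is the only real subtlety, since without it a difference vector could vanish modulo $p$ and the sum would equal $p$. We also need $p\ge d$. Lemma \ref{lem:weilbd} then gives
$$\Bigl|\frac1p\sum_{s=0}^{p-1}\exp(2\pi\mathrm{i}\vech\cdot\vecy_s)\Bigr|\le\frac{d-1}{\sqrt p}.$$

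Third, I choose $p$. I need $(d-1)/\sqrt p\le t/(R-1)$, that is, $p\ge (d-1)^2(R-1)^2/t^2$. So I take $p$ to be any prime with
$$p\ge \max\Bigl\{\frac{(d-1)^2(R-1)^2}{t^2},\,(2C_\alpha M)^{1/\alpha},\,d\Bigr\}.$$
By Bertrand's postulate such a prime exists below twice this maximum. For fixed $t$ and $\alpha$, this gives $p=\mathcal{O}(\max\{d^2R^2, M^{1/\alpha}\})$, with the constant depending on $t$ and $C_\alpha^{1/\alpha}$. Since $S=p$, the conclusion follows uniformly over all $\vech\in\setH\setminus\{\vecz\}$.

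I do not expect any step to be a serious obstacle. The only care needed is the interpretation of the quantifier on $t$: the prime depends on $t$ through the $1/t^2$ factor, which is absorbed into the $\mathcal{O}$-constant.
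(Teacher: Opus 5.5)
Your proof is correct and follows the paper's argument essentially step for step: reduce to the complete Weil sum, use Lemma~\ref{lem:H} to get $\vech\not\equiv\vecz \pmod p$ once $p$ exceeds $(2C_\alpha M)^{1/\alpha}$, impose $p\ge (d-1)^2(R-1)^2/t^2$, and finish with Bertrand's postulate. The one difference is that the paper takes $p$ strictly larger than $\max\{(2C_\alpha M)^{1/\alpha}, d, (d-1)^2(R-1)^2/t^2\}$. Keeping $p>d$ strict is the safer choice, because the Weil bound genuinely needs the degree to be below $p$: for $p=d=2$ and $\vech=(1,1)$, the polynomial $s+s^2$ vanishes at every point of $\mathbb{Z}/2\mathbb{Z}$, so the sum equals $p$. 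For $d\ge 3$ your condition $p>(d-1)^2$ already excludes $p=d$, so this only matters in a degenerate $d=2$ edge case.
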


\begin{proof}
    By the definition of $\{\vecy_s\}$, we have
    \begin{equation}\label{eq:exp_sum_decomp}
        \left|\frac{1}{p} \sum_{s=0}^{p-1} \exp(2\pi \mathrm{i} \vech \cdot \vecy_s)\right| = \left|\frac{1}{p} \sum_{s=0}^{p-1} \exp\left( \frac{2\pi \mathrm{i}}{p} \sum_{i=1}^d h_i s^i \right) \right|.
    \end{equation}
    For any difference vector $\vech \in \setH \setminus \{\vecz\}$, let $D \in \{1, \dots, d\}$ be the largest index such that $h_D \neq 0$. By Lemma \ref{lem:H}, $|h_D| < (2C_\alpha M)^{1/\alpha}$. Choosing any prime $p > \max\{(2C_\alpha M)^{1/\alpha}, d\}$ ensures both $h_D \not\equiv 0 \pmod p$ and $p > D$. Applying Lemma \ref{lem:weilbd} yields:
    $$
        \left|\frac{1}{p} \sum_{s=0}^{p-1} \exp\left( \frac{2\pi \mathrm{i}}{p} \sum_{i=1}^d h_i s^i \right) \right| \le \frac{(D-1)\sqrt{p}}{p} \le \frac{d-1}{\sqrt{p}}.
    $$
    To satisfy the non-singularity threshold $\frac{t}{R-1}$, we require $\frac{d-1}{\sqrt{p}} \le \frac{t}{R-1}$, which implies $p \ge \frac{(d-1)^2(R-1)^2}{t^2}$. 
    
    Therefore, any prime $p$ satisfying $p > \max\left\{ (2C_\alpha M)^{1/\alpha}, d, \frac{(d-1)^2(R-1)^2}{t^2} \right\}$ algebraically satisfies the condition. By Bertrand's postulate, a prime always exists within a factor of 2 of this lower bound, yielding $p = \mathcal{O}(\max\{d^2 R^2, M^{1/\alpha}\})$.
\end{proof}

\subsection{Deterministic Construction via the Union of Rank-1 Lattices with a Common Generator}\label{sec_union_shifts}
To overcome the dimensional dependence of polynomial curves, we employ a geometric projection strategy. Instead of utilizing a single large prime, we construct the shift set using a common integer projection vector $\bz \in \mathbb{Z}^d$ scaled by $k$ consecutive small primes $p_1, \dots, p_k$:
\begin{equation}\label{eq:ysmulti}
    \{\vecy_s\}_{s=1}^S = \bigcup_{j=1}^k \mathcal{L}_j(\bz, p_j), \quad \text{where } \mathcal{L}_j(\bz, p_j) = \left\{ \left\{ \frac{i \bz}{p_j} \right\} : i = 0, \dots, p_j-1 \right\}.
\end{equation}
The total number of shifts is $S = \sum_{j=1}^k p_j$. This shared-direction multi-prime lattice enables the use of the Chinese Remainder Theorem (CRT) to satisfy the exponential sum threshold in \eqref{eq:ysum}. We refer to this construction as the \textit{multi-lattice} strategy.

To use the CRT, the vector $\bz$ must satisfy $\vech \cdot \bz \neq 0$ for all $\vech \in \setH \setminus \{\vecz\}$. Classical lattice rules enforce a stronger modular condition $\vech \cdot \bz \not\equiv 0 \pmod p$ \cite{kammerer2013}. Relaxing this to a simple non-zero integer dot product avoids modular constraints and simplifies the construction.

Let $H \in \mathbb{Z}^{(|\setH|-1) \times d}$ be the matrix whose rows are the vectors $\vech_i \in \setH \setminus \{\vecz\}$. For each row $\vech_i$, we define its active column index $I(i) = \max \{ j : h_{i,j} \neq 0 \}$ as the position of its rightmost non-zero entry. We partition the rows into $d$ groups $H_1, \dots, H_d$, where $H_j$ contains rows ending exactly at column $j$. 

For any $\vech_i \in H_j$, the condition $\vech_i \cdot \bz \neq 0$ depends only on the first $j$ components of $\bz$:
\begin{equation*}
    \vech_i \cdot \bz = \sum_{l=1}^{j-1} h_{i,l} z_l + h_{i,j} z_j + \sum_{l=j+1}^{d} h_{i,l} z_l\neq 0 \implies  z_j \neq -\frac{1}{h_{i,j}} \sum_{l=1}^{j-1} h_{i,l} z_l.
\end{equation*}
Since $z_j$ is an integer, this inequality only imposes a restriction if the right-hand side is an integer (i.e., if $h_{i,j}$ divides the sum). Because the constraints in $H_j$ are decoupled from $z_{j+1}, \dots, z_d$, we can construct $\bz$ sequentially. At step $j$, with $z_1, \dots, z_{j-1}$ fixed, $z_j$ merely needs to avoid at most $|H_j|$ forbidden integer values. Algorithm \ref{alg:CBC_z} implements this by searching a symmetric sequence of integers.
\vspace{-3mm}
\begin{algorithm}[H]
\caption{Symmetric Component-by-Component Construction}
\label{alg:CBC_z}
\begin{algorithmic}[1]
\Require Matrix $H \in \mathbb{Z}^{(|\setH|-1) \times d}$ with no zero rows.
\Ensure Vector $\bz \in \mathbb{Z}^d$ such that $(H\bz)_i \neq 0$ for all $i$.
\State Partition $H$ into active sets $H_j \leftarrow \{ i : I(i) = j \}$ for $j=1, \dots, d$.
\State Initialize $\bz \leftarrow \vecz$.
\For{$j = 1$ to $d$}
    \State $\mathcal{F}_j \leftarrow \emptyset$ \Comment{Set of forbidden values}
    \For{each $i \in H_j$}
        \State $C_i \leftarrow \sum_{l=1}^{j-1} h_{i,l} z_l$ 
        \If{$C_i \equiv 0 \pmod{h_{i,j}}$}
            \State $\mathcal{F}_j \leftarrow \mathcal{F}_j \cup \{ -C_i / h_{i,j} \}$
        \EndIf
    \EndFor
    \State $\mathcal{C} \leftarrow \{0, 1, -1, \dots, \lceil |H_j|/2 \rceil, -\lceil |H_j|/2 \rceil\}$
    \State $z_j \leftarrow \text{first } x \in \mathcal{C} \text{ such that } x \notin \mathcal{F}_j$
\EndFor
\State \Return $\bz$
\end{algorithmic}
\end{algorithm}
\vspace{-3mm}
We now establish the sample size bound $S$ under this CRT-based construction.

\begin{theorem}\label{thm:ys}
    Let $\bz$ be generated by Algorithm \ref{alg:CBC_z}, and let $p_1 < p_2 < \cdots < p_k$ be $k$ consecutive primes. Define the one dimensional projection bound $V = \max\limits_{\vech \in \setH} |\vech \cdot \bz|$. For any $t \in (0,1)$, if
    \begin{equation*}
        p_1 \ge \frac{2(R-1) \ln(V)}{c t} \quad \text{and} \quad k = \left\lceil \frac{2(R-1) \ln(V)}{t \ln p_1} \right\rceil,
    \end{equation*}
    where $c = 0.32$, then the exponential sum threshold \eqref{eq:ysum} holds. The total sample size scales as:
    \begin{equation*}
        S = \sum_{j=1}^k p_j \le \frac{8(R-1)^2 (\ln V)^2}{ct^2 \ln (4(R-1) \ln V)} = \mathcal{O}\left(\frac{R^2(\ln(dRN^2))^2}{\ln(R\ln(dRN^2))}\right).
    \end{equation*}
\end{theorem}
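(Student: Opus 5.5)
The plan is to reduce the multi-dimensional exponential sum in \eqref{eq:ysum} to a one-dimensional character sum over each small lattice. We then use the Chinese Remainder Theorem to show that the nonzero integer $m := \vech\cdot\bz$ can be divisible by only a few of the primes $p_j$.

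\textbf{Step 1: reduction to one dimension.} Fix $\vech \in \setH\setminus\{\vecz\}$ and set $m = \vech\cdot\bz$. By construction (Algorithm \ref{alg:CBC_z}), $m \neq 0$, and by the definition of $V$ we have $|m| \le V$. For the $j$-th component lattice $\mathcal{L}_j(\bz,p_j)$,
$$
\sum_{i=0}^{p_j-1} \exp(2\pi\mathrm{i}\, i m/p_j) =
\begin{cases} p_j, & p_j \mid m,\\ 0, & p_j\nmid m.\end{cases}
$$
Hence
$$
\Bigl|\frac1S\sum_{s=1}^S \exp(2\pi\mathrm{i}\vech\cdot\vecy_s)\Bigr| = \frac{1}{S}\sum_{j:\,p_j\mid m} p_j .
$$
(If the union in \eqref{eq:ysmulti} is taken as a multiset, the common point $\vecz$ simply counts $k$ times; this is exactly what $S=\sum p_j$ means.)

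\textbf{Step 2: counting the bad primes via the CRT.} Let $r$ be the number of indices $j$ with $p_j\mid m$. Since the $p_j$ are distinct primes, their product divides $m\neq0$, so $p_1^r \le \prod_{p_j\mid m} p_j \le |m| \le V$. This gives $r \le \ln V/\ln p_1$.

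The bad primes are at most the $r$ largest ones, so the ratio is at most $r p_k/S$. Since $S \ge k p_1$, it suffices to show
$$
\frac{r\,p_k}{k\,p_1} \le \frac{t}{R-1}.
$$
With $k = \lceil 2(R-1)\ln V/(t\ln p_1)\rceil$ we get $r/k \le t/(2(R-1))$. It therefore remains to show $p_k \le 2p_1$.

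\textbf{Step 3: prime gaps (the main obstacle).} We need $k$ consecutive primes starting at $p_1$ to fit inside $[p_1,2p_1]$. I would invoke an explicit lower bound of the form $\pi(2x)-\pi(x) \ge c\,x/\ln x$ with $c=0.32$; bounds of this strength follow from Rosser--Schoenfeld/Dusart type estimates, for $x$ beyond a modest threshold. The interval then contains at least $c\,p_1/\ln p_1$ primes. The hypothesis $p_1 \ge 2(R-1)\ln V/(ct)$ gives $c\,p_1/\ln p_1 \ge 2(R-1)\ln V/(t\ln p_1)$, which is at least $k$ up to the ceiling. Handling that ceiling, together with verifying the constant $0.32$ and the small-$p_1$ regime, is where care is needed.

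\textbf{Step 4: the sample size bound.} Choose $p_1$ at its threshold, $p_1 \approx 2(R-1)\ln V/(ct)$. Since $p_j\le 2p_1$,
$$
S \le 2kp_1 \approx \frac{8(R-1)^2(\ln V)^2}{c\,t^2\ln p_1},
$$
and $\ln p_1 \ge \ln(4(R-1)\ln V)$ holds when $ct\le 1/2$. The $\mathcal{O}$ form needs a bound on $V$. By Lemma \ref{lem:H}, each $|h_j| < (2C_\alpha M)^{1/\alpha}$, and Algorithm \ref{alg:CBC_z} gives $|z_j| \le \lceil |H_j|/2\rceil \le |\setH| \le R|\setA| \le RN$. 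Hence $V \le d\,(2C_\alpha M)^{1/\alpha} RN$. Using $M^{1/\alpha}\lesssim N$ then yields $\ln V = \mathcal{O}(\ln(dRN^2))$, which completes the bound.
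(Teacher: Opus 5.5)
Your proposal is correct and follows essentially the same route as the paper: character orthogonality on each $\mathcal{L}_j(\bz,p_j)$ reduces the sum to $\frac{1}{S}\sum_{p_j\mid X_{\vech}}p_j$, the CRT caps the number of bad primes at $\ln V/\ln p_1$, the Rosser--Schoenfeld estimate with $c=0.32$ forces $p_k\le 2p_1$, and $S\le 2kp_1$ is combined with Lemma~\ref{lem:H} and the CBC bound on $|z_j|$. The ceiling you flag as delicate is harmless: $\pi(2p_1)-\pi(p_1)$ is an integer strictly greater than $2(R-1)\ln V/(t\ln p_1)$, so it is at least $k$, and hence even $p_{k+1}\le 2p_1$.
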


\begin{proof}
    For any $\vech \in \setH \setminus \{\vecz\}$, let $X_{\vech} = |\vech \cdot \bz| > 0$ be the collision integer. The exponential sum over $\mathcal{L}_j(\bz, p_j)$ equals $p_j$ if $p_j$ divides $X_{\vech}$ (a ``bad'' prime), and exactly $0$ otherwise due to the orthogonality of characters \cite{dick2022}.

    Let $m$ denote the number of ``bad'' primes for a given $\vech$. If $p_j \in \mathcal{P}_{bad}$, it implies $p_j$ divides the scalar $X_{\vech}$. Since $X_{\vech} \neq 0$ and the primes are mutually coprime, the Chinese Remainder Theorem (CRT) \cite{hardy1979} dictates that the product of these $m$ distinct ``bad'' primes must also divide $X_{\vech}$. Consequently,
    \begin{equation*}
        p_1^m \le \prod_{p_j \in \mathcal{P}_{bad}} p_j \le |X_{\vech}| \le V \implies m \le \ln(V)/\ln (p_1).
    \end{equation*}
     Defining the prime ratio $\rho := p_k/p_1$, the exponential sum is bounded by:
    \begin{equation*}
        \left| \frac{1}{S} \sum_{s=1}^S \exp(2\pi \mathrm{i} \vech \cdot \vecy_s) \right| = \frac{1}{S} \sum_{p_j \mid X_{\vech}} p_j \le \frac{m p_k}{k p_1} = \frac{m}{k}\rho \le \frac{\rho \ln V}{k \ln p_1}.
    \end{equation*}
    To ensure this is bounded by $\frac{t}{R-1}$, we require $k \ge \frac{(R-1)\rho \ln V}{t \ln p_1}$. By the prime counting bounds from Rosser and Schoenfeld \cite{rosser1962}, for any $n \ge 2$, $\pi(2n) - \pi(n) > c \frac{n}{\ln n}$ with $c=0.32$, where $\pi(n)$ is the number of primes less than or equal to $n$. The number of primes $p$ in the interval $[p_1, 2p_1)$ is greater than $c \frac{p_1}{\ln p_1}$. By choosing $p_1$ such that $c \frac{p_1}{\ln p_1} \ge \frac{2(R-1)  \ln V}{t \ln p_1}$, i.e. $p_1 \ge \frac{2 (R-1) \ln V}{c t}$, there are at least $\frac{2 (R-1) \ln V}{t \ln p_1}$ primes in the interval $[p_1, 2p_1)$. Setting $k = \left\lceil \frac{2(R-1) \ln V}{t \ln p_1} \right\rceil$ thereby ensures that the $k$-th prime satisfies $p_k \le 2 p_1$, effectively locking the ratio $\rho \le 2$. This strictly satisfies the required condition $k \ge \frac{(R-1) \rho \ln V}{t \ln p_1}$.

    The upper bound on $S$ follows from $S \le k p_k < 2 k p_1$. Since Algorithm \ref{alg:CBC_z} implies 
    $$
        \max\limits_{1\le j\le d} |z_j| \le \max\limits_{1\le j\le d}{\lceil |H_j|/2 \rceil} \le  (|\setH|-1)/2+1 \le (R-1)N/4+1 < RN/4,
    $$
    and $\max\limits_{1\le j\le d} |h_j| < (2C_\alpha M)^{1/\alpha}$ by \eqref{eq:H}, we have 
    $$
        V = \max_{\vech \in \setH}|\vech \cdot \bz| \le  d\max\limits_{1\le j\le d} |h_j| \max\limits_{1\le j\le d} |z_j| \le d(2C_\alpha M)^{1/\alpha}RN/4.
    $$
    Substituting $V$ into the bound yields the asymptotic scaling.
\end{proof}

\begin{remark}
The multi-prime structure of the shift set $\bigcup_{j=1}^k \mathcal{L}_j(\bz, p_j)$ is geometrically analogous to the multiple rank-1 lattices in \cite{gross2021}. However, the two frameworks differ fundamentally in their construction logic and algebraic objectives. While \cite{gross2021} relies on an adaptive prime selection and a generator $\bz$ derived from a pre-existing large lattice to ensure absolute collision avoidance (i.e., $\vech \cdot \bz \not\equiv 0 \pmod{p_j}$ for at least one $p_j$), our framework utilizes a self-contained $\bz$ satisfying only $\vech \cdot \bz \neq 0$ and employs consecutive small primes. In Theorem \ref{thm:ys}, the CRT is used solely to bound the number of ``bad primes" dividing the projection scalar $\vech \cdot \bz$, which ensures the diagonal dominance of the Gram matrix and structurally compresses the shift count $S$ to a logarithmic scale.
\end{remark}

\subsection{Adaptive Deterministic Strategy}
While Theorem \ref{thm:ys} provides a rigorous upper bound $S_{\mathrm{CRT}} = \mathcal{O}(R^2 (\ln V)^2)$ where $V = \mathcal{O}(d R M^{1/\alpha} N)$, this bound for $V$ is often conservative. It assumes a worst-case scenario where the collision integer $X_{\vech} = |\vech \cdot \bz|$ simultaneously reaches the theoretical upper limits of both $\vech$ and $\bz$, i.e. we use the estimation $|\setH|-1 \le (R-1) N/2$. In practice, the absolute projection bound $V = \max_{\vech \in \setH} |X_{\vech}|$ is strictly bounded well below this limit.

Crucially, the vector $\bz$ generated by Algorithm~\ref{alg:CBC_z} is generated prior to the selection of the prime $p$. This two-step decoupled approach circumvents the dimensional bottleneck observed in standard component-by-component constructions \cite{kammerer2013}. Standard methods attempt to determine the generating vector $\bz$ and the prime $p$ simultaneously, requiring an evaluation over the full $d$-dimensional difference set $\setH$, which leads to a combinatorial explosion in high dimensions. In contrast, our algorithm orthogonally decouples this procedure by first fixing a universal projection direction $\bz$. This maps the $d$-dimensional aliasing constraints into a one-dimensional scalar set $\{X_{\vech}\}$, isolating the prime search to the scalar regime $p \le V$ and eliminating the high-dimensional search space. Consequently, a greedy search for a single prime $p$ satisfying $X_{\vech} \not\equiv 0 \pmod p$ frequently succeeds before exhausting the $S_{\mathrm{CRT}}$ budget. This corresponds to the special case of $k = 1$ in the multi-lattice strategy \eqref{eq:ysmulti}, which we refer to as the single-lattice strategy.

Similarly, while Lemma \ref{lem:ysweil} establishes a prime bound $\mathcal{O}(\max\{d^2 R^2, M^{1/\alpha}\})$ via the Weil bound for low-dimensional constructions, this theoretical estimate serves as a worst-case envelope. Practical implementations typically identify valid primes satisfying the exponential sum threshold well below this theoretical bound.

To formalize the mathematical boundaries of the single-lattice strategy before analyzing its dimensional behavior, we establish its underlying geometric capacity constraint.

\begin{lemma}[Geometric Capacity of the Single-Lattice Strategy] \label{lem:capacity}
    Suppose the shift set is constructed using a single prime $p$ and a projection vector $\bz$ such that $\vech \cdot \bz \not\equiv 0 \pmod p$ for all $\vech \in \mathcal{H}\setminus\{\vecz\}$. This prime is strictly subject to the geometric capacity lower bound:
    \begin{equation}\label{eq:lowerp}
        p \ge \frac{\lfloor(\gamma_1 M)^{1/\alpha}\rfloor \lfloor(\gamma_2 M)^{1/\alpha}\rfloor}{N}.
    \end{equation}
\end{lemma}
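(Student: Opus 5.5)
The plan is to produce, by pigeonhole, a family of at least $\lfloor(\gamma_1 M)^{1/\alpha}\rfloor\lfloor(\gamma_2 M)^{1/\alpha}\rfloor/N$ integer vectors whose pairwise differences all lie in $\setH\setminus\{\vecz\}$. The hypothesis $\vech\cdot\bz\not\equiv 0 \pmod p$ then forces their projections $\veck\cdot\bz \bmod p$ to be pairwise distinct, so $p$ is at least the size of the family.

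\textbf{Step 1 (pigeonhole).} Write $a:=\lfloor(\gamma_1 M)^{1/\alpha}\rfloor$ and $b:=\lfloor(\gamma_2 M)^{1/\alpha}\rfloor$. Consider the grid
\[
G=\{\veck=(k_1,k_2,0,\dots,0): 0\le k_1<a,\ 0\le k_2<b\},
\]
which has $ab$ points. Note that $G$ is not contained in $\setA$; this does not matter. Map each $\veck\in G$ to $\veck\cdot\vecg \bmod N$. There are only $N$ residues, so some residue class contains a subset $Q\subset G$ with $|Q|\ge ab/N$.

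\textbf{Step 2 (differences lie in $\setH$).} Take distinct $\veck,\veck'\in Q$ and set $\vecb w=\veck-\veck'=(w_1,w_2,0,\dots,0)\neq\vecz$. Then $w_1g_1+w_2g_2\equiv 0\pmod N$, with $|w_1|\le a-1$ and $|w_2|\le b-1$. Set $\vecl=(w_1,0,\dots,0)$ and $\vecl'=(0,-w_2,0,\dots,0)$.

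First, both lie in $\setA$. For $\vecl$ we have $r_{\alpha,\vecgamma}(\vecl)=\max\{1,|w_1|^\alpha/\gamma_1\}$. The bound $|w_1|<a\le(\gamma_1M)^{1/\alpha}$ gives $|w_1|^\alpha/\gamma_1<M$, and $M>1$ in the regime considered. The argument for $\vecl'$ is the same.

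Second, they alias: $\vecl\cdot\vecg-\vecl'\cdot\vecg=w_1g_1+w_2g_2\equiv 0\pmod N$. Hence $\vecl$ and $\vecl'$ belong to a common fiber. Since the fibers partition $\setA$, this means $\vecl'\in\Gamma_{\alpha,\vecgamma,N}(\vecl;\vecg)$, which is one of the $\Gamma_{\alpha,\vecgamma,N}(\vecl^{(j)};\vecg)$.

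It follows that $\vecl-\vecl'=\vecb w\in\setH\setminus\{\vecz\}$. In particular every nonzero pairwise difference of elements of $Q$ lies in $\setH\setminus\{\vecz\}$.

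\textbf{Step 3 (injectivity modulo $p$).} By hypothesis, $(\veck-\veck')\cdot\bz\not\equiv 0\pmod p$ for all distinct $\veck,\veck'\in Q$. So the map $\veck\mapsto \veck\cdot\bz \bmod p$ is injective on $Q$. Therefore $p\ge|Q|\ge ab/N$, which is \eqref{eq:lowerp}.

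\textbf{Main obstacle.} The key point is Step 2. The naive route of bounding $|\setA|\le Np$ through an injective map $\veck\mapsto(\veck\cdot\vecg \bmod N,\ \veck\cdot\bz \bmod p)$ on $\setA$ is far too weak, because the hyperbolic cross does not contain the full $a\times b$ box. The trick is to use only axis-aligned frequencies, which do lie in $\setA$, and to realise every box difference lying in the dual lattice as the difference of two aliased axis frequencies. The only care required is the boundary check $|w_1|^\alpha/\gamma_1<M$, which holds because the floors give $|w_1|\le a-1$.
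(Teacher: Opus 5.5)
Your argument is correct, but it is organized differently from the paper's proof.

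The paper uses the CRT to collapse the sampling set $\{n\vecg/N+s\bz/p\}$ into a single rank-1 lattice of size $Np$. Its generator $\vecg_{tot}$ reduces to $\vecg$ modulo $N$ and to $\bz$ modulo $p$. The paper then checks that no nonzero element of $\setH$ lies in the dual of this lattice; other differences of $\setA$ are already nonzero modulo $N$. Finally it invokes K\"ammerer's capacity lemma for reconstructing rank-1 lattices to obtain $Np\ge\lfloor(\gamma_1M)^{1/\alpha}\rfloor\lfloor(\gamma_2M)^{1/\alpha}\rfloor$.

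You never form the combined lattice. Your two-stage pigeonhole (residues modulo $N$, then injectivity modulo $p$ within one class) amounts to showing that $\veck\mapsto(\veck\cdot\vecg \bmod N,\ \veck\cdot\bz \bmod p)$ is injective on the $a\times b$ box. That is exactly the counting hidden inside the cited lemma. Your Step 2, which writes each aliasing box difference as a difference of two aliased axis frequencies in $\setA$, is what lets this work with a hypothesis imposed only on $\setH$.

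Your route buys three things:
\begin{itemize}
\item it is self-contained;
\item it derives the weighted constants $\lfloor(\gamma_jM)^{1/\alpha}\rfloor$ explicitly, whereas the paper obtains them only by appeal to the cited lemma;
\item it does not need $p\neq N$, which the CRT collapse tacitly requires.
\end{itemize}
The paper's route buys the structural identification of the single-lattice strategy with a rank-1 lattice of size $N_{tot}=Np$. Its later discussion of the $\mathcal{O}(N_{tot}^{-\alpha/2})$ barrier relies on that identification.

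Your caveat $M>1$ is harmless under the standing assumption $\gamma_j\le 1$. If $|Q|\le 1$, the bound is trivial because $ab/N\le|Q|\le 1<2\le p$. If $|Q|\ge 2$, then $ab\ge 2$, so $a\ge 2$ or $b\ge 2$, which forces $M\ge 2^\alpha/\gamma_j>1$.
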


\begin{proof}
    It is clear that the combined sampling set $\{n\vecg/N + s\bz/p\}_{n=0,\dots,N-1}^{s=0,\dots,p-1}$ structurally collapses into a single rank-1 lattice of size $N_{tot} = Np$. Let $\vecg_{tot}$ be the equivalent generator of the combined rank-1 lattice. By the Chinese Remainder Theorem, $\vecg_{tot} \equiv \vecg \pmod{N}$ and $\vecg_{tot} \equiv \bz \pmod{p}$. For any aliasing difference vector $\vech \in \mathcal{H}\setminus\{\vecz\}$, the base lattice property yields $\vech \cdot \vecg \equiv 0 \pmod{N}$.
    
    Assume for contradiction that $\vech \cdot \vecg_{tot} \equiv 0 \pmod{Np}$. This strictly implies $\vech \cdot \vecg_{tot} \equiv 0 \pmod{p}$. Since $\vecg_{tot} \equiv \bz \pmod{p}$, it follows that $\vech \cdot \bz \equiv 0 \pmod{p}$, which directly contradicts our search condition. Thus, $\vech \cdot \vecg_{tot} \not\equiv 0 \pmod{Np}$ holds for all $\vech \in \mathcal{H}\setminus\{\vecz\}$, ensuring no difference vectors belong to the dual lattice of the combined set. According to the geometric capacity bounds of rank-1 lattices \cite[Lemma 2.1]{kammerer2013}, preventing such dual lattice intersections necessitates $Np \ge \lfloor(\gamma_1 M)^{1/\alpha}\rfloor \lfloor(\gamma_2 M)^{1/\alpha}\rfloor$, which yields \eqref{eq:lowerp}.
\end{proof}

We now analyze the algebraic mechanisms of these complementary construction strategies across varying regimes, which directly motivates their integration into the proposed unified Adaptive Framework.

    \vspace{0.5em} \noindent \textbf{Low-Dimensional Setting:} Ensuring the cardinality of the hyperbolic cross set $|\setA| \approx N$ requires a large truncation threshold $M$. This permits the aliasing difference vectors $\vech \in \setH\setminus\{\vecz\}$ to possess large individual components, which directly leads to a large projection bound $V$. The resulting integers in $\{X_{\vech}\}$ accumulate a wide union of distinct prime factors. Consequently, identifying a single valid prime $p$ for the single-lattice strategy requires an excessively large search space.
    
    In contrast, the polynomial strategy \eqref{eq:ysweil} structurally alters the failure condition. To violate the non-degenerate condition $\vech \not\equiv \vecz \pmod p$, a prime must simultaneously divide every component of $\vech$. Algebraically, this implies $p \mid \gcd(h_1, \dots, h_d)$. Because $\gcd(h_1, \dots, h_d) \le \min_{h_j \neq 0} |h_j|$, this intersection property functions as a strict filter for potential ``bad'' primes, rendering the polynomial search insensitive to the extreme magnitude of $\max |h_j|$. Furthermore, in low dimensions, the Weil bound threshold ensuring the exponential sum decay remains small (e.g., $(d-1)^2(R-1)^2 = 4$ for $d=2, R=3$). The combination of this easily satisfied threshold and the strict $\gcd$ filter allows the polynomial strategy to identify valid shifts using small primes, bypassing the theoretical bound $p > \max|h_j|$.
    
    \vspace{0.5em} \noindent \textbf{High-Dimensional Setting:} To maintain the computational budget in high dimensions, the threshold $M$ shrinks severely, naturally bounding the components of $\vech$ and resolving the prime accumulation issue. By Lemma \ref{lem:capacity}, this severe shrinkage aggressively drives down the numerator in \eqref{eq:lowerp}, formally stabilizing the single-lattice strategy at remarkably small primes. Algorithmically, the inequality \eqref{eq:lowerp} serves as a highly efficient $\mathcal{O}(1)$ pruning filter: any prime strictly smaller than this threshold mathematically guarantees a collision in $\mathcal{H}$, allowing the algorithm to safely bypass $\mathcal{O}(|\mathcal{H}|)$ divisibility checks. 
    
    Conversely, the polynomial strategy is structurally penalized here: the minimum prime required to ensure the exponential sum satisfies the deterministic threshold is algebraically bounded by the dimension and maximum fiber length (i.e., $p \ge \mathcal{O}(d^2R^2)$), forcing an inevitable inflation of the prime search.

    \vspace{0.5em} \noindent \textbf{Theoretical Limits and Asymptotic Fallback:} While the single-lattice strategy is computationally efficient, it lacks practical relevance in the asymptotic sense. As established in Lemma \ref{lem:capacity}, the sampling set geometrically collapses into a single rank-1 lattice of size $N_{tot} = Np$. According to the Pigeonhole Principle \cite{byrenheid2017tight, dick2022}, this topology is fundamentally restricted by a worst-case lower bound of $\mathcal{O}(N_{tot}^{-\alpha/2})$. Furthermore, to maintain the optimal truncation $M = \mathcal{O}(N^{\alpha})$ as $N \to \infty$, the capacity constraint \eqref{eq:lowerp} inevitably forces $p \ge \mathcal{O}(N)$. Unrestricted, the single-lattice strategy would inflate the total cost to $N_{tot} \ge \mathcal{O}(N^2)$, thereby degrading the optimal spatial error $\mathcal{O}(N^{-\alpha})$ to the suboptimal bound $\mathcal{O}(N_{tot}^{-\alpha/2})$. To decisively resolve this asymptotic barrier, we impose the theoretical multi-prime limit (Theorem \ref{thm:ys}) as a hard search ceiling. As $p$ attempts to grow linearly, the search naturally hits this global bound, triggering an automatic fallback to the multi-prime strategy, which mathematically preserves the optimal worst-case complexity of $\mathcal{O}(N_{tot}^{-\alpha})$ up to logarithmic factors.

    \vspace{0.5em} \noindent \textbf{Greedy Multi-Prime Search:} To strictly accommodate the deterministic worst-case setting across all regimes without immediately defaulting to the excessively large theoretical budget $S_{\mathrm{CRT\_bound}}$ established in Theorem \ref{thm:ys}, we introduce a \textit{greedy multi-prime search} mechanism. Since the multi-lattice strategy reduces the exponential sum criteria to pure integer divisibility checks for $X_{\vech}$, we sequentially accumulate small primes $p_1, \ldots, p_k$ and dynamically evaluate the non-orthogonality ratio. This greedy approach strictly preserves the non-collapsing multi-lattice topology—avoiding the $\mathcal{O}(N_{tot}^{-\alpha/2})$ worst-case rate—while compressing the total sampling cost $S = \sum_j p_j$ to be orders of magnitude smaller than the theoretical upper bound.

Motivated by these complementary algebraic structures and theoretical constraints, we propose a comprehensive Adaptive Shift Construction Framework. The algorithm evaluates these strategies concurrently, tracks a dynamic global upper bound, and dynamically selects the optimal shift construction.

\begin{algorithm}[htbp]
\caption{Adaptive Deterministic Shift Construction}
\label{alg:adaptive_shift}
\begin{algorithmic}[1]
\Require Difference set $\setH$, maximum fiber length $R$, threshold $t \in (0,1)$, Sample Size $N$, threshold $M$, weight $\vecgamma$
\Ensure Number of shifts $S$, and shift set $Y = \{\vecy_0,\dots,\vecy_{S-1}\}$

\If{$R = 1$ \textbf{or} $\setH = \emptyset$}
    \State \Return $S \gets 1$, $Y \gets \{\vecz\}$ 
\EndIf

\State Construct the CRT generating vector $\bz \in \mathbb{Z}^d$ for $\setH$ (Algorithm \ref{alg:CBC_z})
\State Calculate projection integers $X_{\vech} = |\vech \cdot \bz|$ for all $\vech \in \setH\setminus\{\vecz\}$
\State Calculate the theoretical multi-prime sequence $p_1, \dots, p_k$ (Theorem \ref{thm:ys})

\State \textbf{Initialize Global Best:}
\State $S_{\mathrm{best}} \gets \sum_{j=1}^k p_j$
\State $Y_{\mathrm{best}} \gets \bigcup_{j=1}^k \left\{ \frac{s\bz}{p_j} \bmod 1 \;\middle|\; s=0,\dots,p_j-1 \right\}$
\State $p_{\min} \gets \frac{\lfloor(\gamma_1 M)^{1/\alpha}\rfloor \lfloor(\gamma_2 M)^{1/\alpha}\rfloor}{N}$ \Comment{Theoretical threshold \eqref{eq:lowerp}}

\State \textbf{Initialize Greedy Trackers:}
\State $p \gets \text{next prime}(R)$ \Comment{The next prime $\ge R$}
\State $P_{\mathrm{greedy}} \gets \emptyset, \quad S_{\mathrm{greedy}} \gets 0, \quad N_{\vech} \gets 0 \text{ for all } \vech \in \setH\setminus\{\vecz\}$
\State $\mathrm{greedy\_found} \gets \text{False}$

\While{$p < S_{\mathrm{best}}$}
    \State \textbf{Step A: Polynomial Strategy}
    \State $Y_{\mathrm{poly}} \leftarrow \left\{ \frac{1}{p}(s,s^2,\dots,s^d) \bmod 1 \;\middle|\; s=0,\dots,p-1 \right\}$
    \If{$\max_{\vech\in\setH} \left| \sum_{s=0}^{p-1} e^{2\pi \mathrm{i} \vech\cdot\vecy_s} \right| \le \frac{t\,p}{R-1}$}
        \State $S_{\mathrm{best}} \gets p, \quad Y_{\mathrm{best}} \gets Y_{\mathrm{poly}}$
        \State \textbf{break} \Comment{Global minimum reached}
    \EndIf
    
    \State \textbf{Step B: Single-Lattice Strategy}
    \If{ $p \ge p_{\min}$ and $X_{\vech} \not\equiv 0 \pmod p$ for all $\vech \in \setH\setminus\{\vecz\}$}
        \State $S_{\mathrm{best}} \gets p, \quad Y_{\mathrm{best}} \gets \left\{ \frac{s\bz}{p} \bmod 1 \;\middle|\; s=0,\dots,p-1 \right\}$
        \State \textbf{break} \Comment{Global minimum reached}
    \EndIf
    
    \State \textbf{Step C: Multi-Lattice Strategy (Greedy)}
    \If{\textbf{not} $\mathrm{greedy\_found}$}
        \State $P_{\mathrm{greedy}} \gets P_{\mathrm{greedy}} \cup \{p\}, \quad S_{\mathrm{greedy}} \gets S_{\mathrm{greedy}} + p$
        \State For all $\vech$, \textbf{if} $X_{\vech} \equiv 0 \pmod p$ \textbf{then} $N_{\vech} \gets N_{\vech} + p$
        \If{$\max_{\vech} \left( \frac{N_{\vech}}{S_{\mathrm{greedy}}} \right) \le \frac{t}{R-1}$}
            \State $\mathrm{greedy\_found} \gets \text{True}$ \Comment{Freeze greedy accumulation}
            \If{$S_{\mathrm{greedy}} < S_{\mathrm{best}}$}
                \State $S_{\mathrm{best}} \gets S_{\mathrm{greedy}}$
                \State $Y_{\mathrm{best}} \gets \bigcup_{p_j \in P_{\mathrm{greedy}}} \left\{ \frac{s\bz}{p_j} \bmod 1 \;\middle|\; s=0,\dots,p_j-1 \right\}$
            \EndIf
        \EndIf
    \EndIf
    
    \State $p \gets \text{next prime}(p+1)$
\EndWhile

\State \Return $S \gets S_{\mathrm{best}}$, $Y \gets Y_{\mathrm{best}}$
\end{algorithmic}
\end{algorithm}

To systematically validate the internal mechanisms of this adaptive framework, we evaluate $S$ across dimensions $d \in \{2, \dots, 50\}$ and base lattice sizes $N \in \{2^{10}, \dots, 2^{20}\}$, where each $N$ is assigned its nearest prime neighbor within the specified range. The test environment is configured with a low smoothness $\alpha = 1$, a slow-decaying product weights $\gamma_j = 2^{(1-j)/10}$ and $t = 0.95$. In the numerical experiments presented in Section \ref{sec:numer}, to illustrate the error behavior without severe truncation errors, we employ a bisection method to determine $M$ such that $|\setA| \approx N$, rather than adopting its theoretical lower bound. We follow the identical approach here.

\begin{figure}[htbp]
    \centering
    \includegraphics[width=1\linewidth]{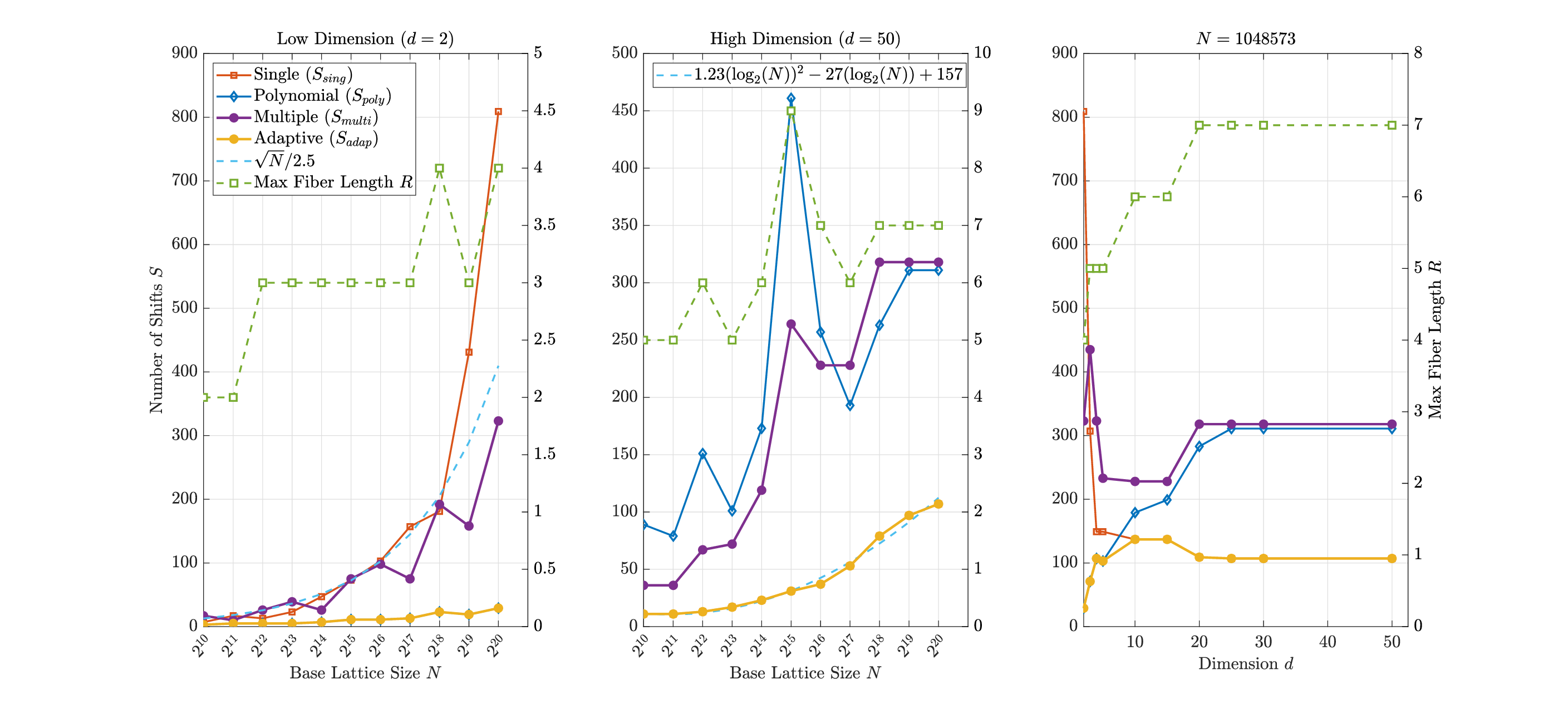}
    \caption{The total number of shifts $S$ evaluated across different strategies.}
    \label{fig:s_optimal}
\end{figure}

Figure \ref{fig:s_optimal} compares the baseline Single Prime Lattice strategy ($S_{\mathrm{sing}}$), the Polynomial strategy ($S_{\mathrm{poly}}$), the greedy Multi-Lattice strategy ($S_{\mathrm{multi}}$), and the proposed unified Adaptive framework ($S_{\mathrm{adap}}$). Note that the theoretical bound $S_{\mathrm{CRT\_bound}}$ is omitted, as its excessively large numerical values would obscure the behavior of the operational strategies. The empirical behavior strictly corroborates our algebraic analysis:

\vspace{0.5em} \noindent \textbf{Low-Dimensional Regime ($d = 2$):} As predicted, the single-lattice strategy ($S_{\mathrm{sing}}$) suffers from severe prime factor accumulation due to the large truncation threshold $M$. This manifests as a substantial $\mathcal{O}(N^{0.5})$ baseline cost within the interval $N \in [2^{10}, 2^{18}]$, followed by a rapid inflation consistent with the theoretical $\mathcal{O}(N)$ limit as $N$ approaches $2^{20}$. By seamlessly tracking the highly efficient polynomial strategy ($S_{\mathrm{poly}} < 30$), the adaptive framework $S_{\mathrm{adap}}$ completely bypasses this massive geometric penalty.

\vspace{0.5em} \noindent \textbf{High-Dimensional Regime ($d = 50$):} Driven by extreme threshold shrinkage, the single-lattice strategy mathematically stabilizes at remarkably small primes, maintaining an empirical $\mathcal{O}((\log_2 N)^2)$ trajectory even at $N \approx 2^{20}$. Conversely, $S_{\mathrm{poly}}$ and $S_{\mathrm{multi}}$ are structurally penalized by the high spatial dimension. Here, $S_{\mathrm{adap}}$ adaptively transitions to the single-lattice construction, locking onto the absolute minimum sampling cost.

\vspace{0.5em} \noindent \textbf{Dimensional Crossover and Global Optimality:} The rightmost panel strictly isolates the dimensional effect by fixing an extreme base size $N \approx 2^{20}$. While $S_{\mathrm{poly}}$ suffers inevitable algebraic inflation as $d$ grows, the required prime for $S_{\mathrm{sing}}$ drops precipitously. The adaptive strategy dynamically captures this precise crossover point without manual tuning, globally minimizing the sampling cost across all parameter regimes.

\begin{remark}[Numerical Stability]\label{rem:stable}
    In \cite{cai2025}, it is proved that assigning $S = \left\lceil 2KR\log N \right\rceil$ random shifts bounds the Gram matrix eigenvalues by
    $
    S \le \lambda( G_j) \le (2v_j-1)S,
    $
    with high probability. Consequently, the probabilistic condition number is bounded by $\kappa(G_j) \le 2R-1 = \mathcal{O}((\log N)^{d-1})$, which theoretically degenerates in high dimensions. In strict contrast, our deterministic approach explicitly enforces $\kappa(G_j) \le \frac{1+t}{1-t}$ globally (Lemma \ref{lem:G}). This structurally guarantees absolute numerical stability independent of $N$ or $d$.
\end{remark}

To validate this robustness, we compare the total shifts $S$ and the maximum condition number $\max_{j}\{\kappa(G_j)\}$ across three approaches: the standard Probabilistic method \cite[Lemma 3.6]{cai2025} (99\% success rate), the Simplified Probabilistic method (Lemma \ref{lem:prob}), and our unified Adaptive strategy, retaining the exact parameter configuration from Figure \ref{fig:s_optimal}.

\begin{figure}[htbp]
    \centering
    \includegraphics[width=1\linewidth]{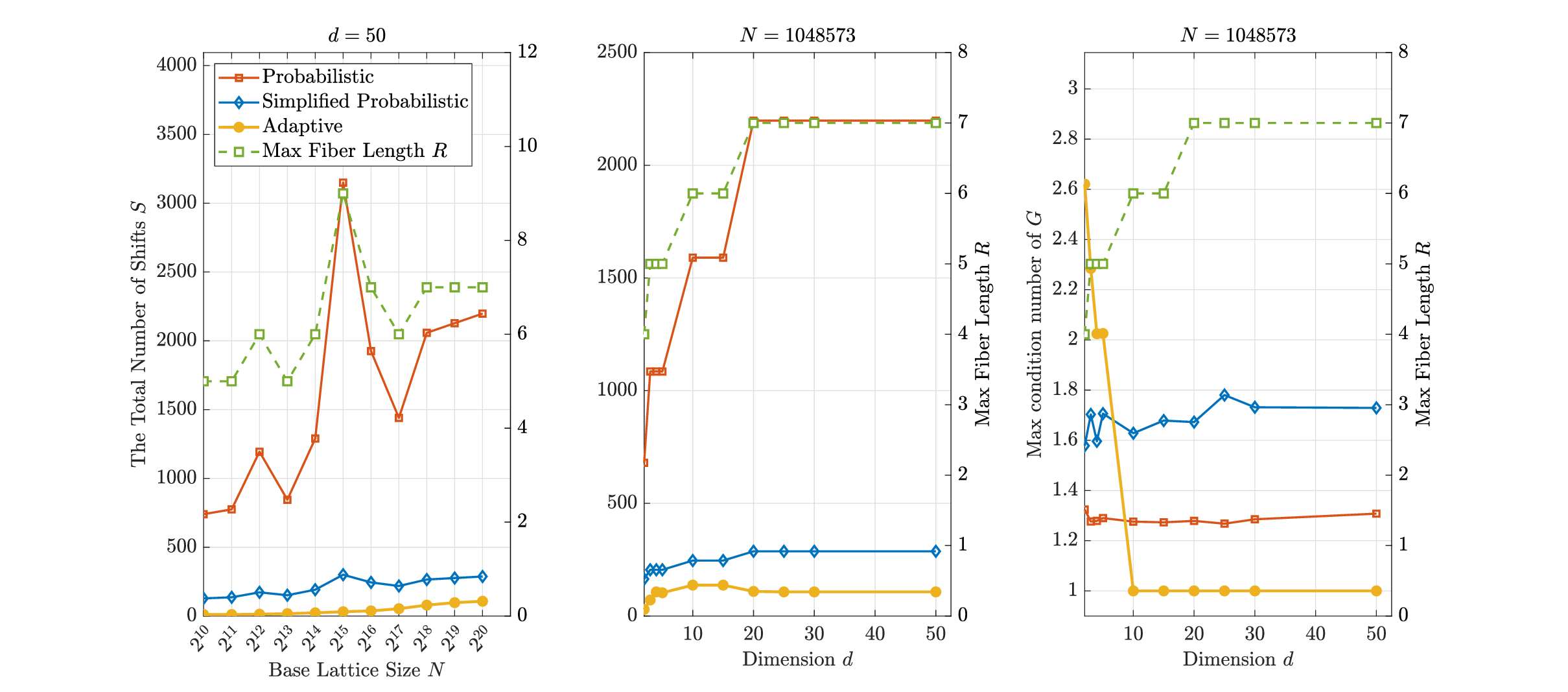}
    \caption{Sampling complexity and numerical stability comparison against state-of-the-art probabilistic methods.}
    \label{fig:compare_S}
\end{figure}
\vspace{0.5em} \noindent \textbf{Order-of-Magnitude Sampling Reduction:} As shown in Figure \ref{fig:compare_S} (left and middle), the standard probabilistic method scales as $\mathcal{O}(R^2)$, severely amplifying the step-like growth of the maximum fiber length $R$. While the Simplified Probabilistic method tempers this to $\mathcal{O}(R \log N)$, the Adaptive strategy dramatically outperforms both. At the extreme scale of $d=50$ and $N \approx 2^{20}$, the probabilistic baseline requires $S = 2198$ shifts to ensure recovery. Our Adaptive strategy guarantees deterministic reconstruction with a mere $S = 107$ shifts, representing a $20\times$ reduction in sampling budget.

\vspace{0.5em} \noindent \textbf{Guaranteed Numerical Stability:} The condition number analysis (Figure \ref{fig:compare_S}, right) explicitly verifies the dynamic transition mechanics. In low dimensions ($d < 10$), $S_{\mathrm{adap}}$ selects the Polynomial curve, yielding a safely bounded condition number ($\kappa \approx 2.6$) strictly within the theoretical limit $\frac{1+t}{1-t} = 39$. As $d \ge 10$, the strategy systematically switches to the Single-Lattice construction. This physically renders the columns of the system matrix perfectly orthogonal ($B^H B = p I_{v_j}$), driving the condition number down to the absolute optimal minimum of $\kappa(G_j) = 1$. The deterministic Adaptive framework thus achieves an order-of-magnitude reduction in sampling cost while mathematically cementing absolute numerical stability.

\section{Theoretical analysis}\label{sec_error_analysis}
We now consider the $L_{\infty}$ error with respect to the total number of function evaluations $N_{tot} = NS$. 
By Theorem \ref{thm:ys}, we have 
$$
S = \mathcal{O}\left(\frac{R^2(\ln (dRN^2))^2}{\ln (R\ln(dRN^2))}\right).
$$
By Lemma \ref{lem:R}, the maximum fiber length satisfies $R \le \mathcal{O}((\log N)^{d-1})$. Hence, 
$$
N \le N_{tot} = NS = \mathcal{O}\left(\frac{NR^2(\ln (dRN^2))^2}{\ln (R\ln(dRN^2))}\right) \le \mathcal{O}\left( N (\log N)^{2d} \right).
$$
Thus, there exists a constant $C > 0$ such that
\begin{equation}\label{eq:Ntot}
    \frac{C N_{tot}}{(\log N_{tot})^{2d}} \le N \le N_{tot}.
\end{equation}

We analyze the worst-case approximation error defined as
$$
e_{\infty}^{\text{wor}}(\mathcal{A}, \mathcal{K}_{d,\alpha,\vecgamma}) = \sup_{\substack{f \in \mathcal{K}_{d,\alpha,\vecgamma} \\ \|f\|_{\mathcal{K}_{d,\alpha,\vecgamma}} \le 1}} \sup_{\vecx \in [0,1]^d} |f(\vecx) - \mathcal{A}(f)(\vecx)|,
$$
where $\mathcal{A}(f) = \mathcal{A}_{\vecz}(f)$ is given by \eqref{eq:appf}.

We first state a lemma from \cite[Lemma 14.2]{dick2022} to bound the truncation error.

    \begin{lemma}\label{lem:sumr}
        Given $\alpha > 1/2$ and  $M \ge 1$, for any $q \in (1/(2\alpha), 1)$, we have
    $$
    \sum_{\veck \in \mathbb{Z}^d \setminus \setA} \frac{1}{r_{\alpha,\vecgamma}^2(\veck)} \le \frac{1}{(\gamma_1 M)^{(1/q-1)/\alpha}} \frac{q}{1-q} \prod_{j=1}^d \left( 1 + 2\gamma_j^{2q} \zeta(2\alpha q) \right)^{1/q}.
    $$
    \end{lemma}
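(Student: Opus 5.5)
The statement is Lemma~\ref{lem:sumr}, the tail bound for $\sum_{\veck\notin\setA} r_{\alpha,\vecgamma}^{-2}(\veck)$. I would prove it with the standard Rankin trick.

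\textbf{Rankin step.} For $\veck\notin\setA$ we have $r_{\alpha,\vecgamma}(\veck)\ge M$. Hence, for any $q\in(1/(2\alpha),1)$,
\[
\frac{1}{r_{\alpha,\vecgamma}^2(\veck)}=\frac{1}{r_{\alpha,\vecgamma}^{2q}(\veck)}\,\frac{1}{r_{\alpha,\vecgamma}^{2(1-q)}(\veck)}\le \frac{1}{M^{2(1-q)}}\,\frac{1}{r_{\alpha,\vecgamma}^{2q}(\veck)} .
\]
Summing over all $\veck\in\Z^d$ and factorising gives
\[
\sum_{\veck\in\Z^d}r_{\alpha,\vecgamma}^{-2q}(\veck)=\prod_{j=1}^d\bigl(1+2\gamma_j^{2q}\zeta(2\alpha q)\bigr),
\]
which is finite because $2\alpha q>1$. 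This already yields a bound of order $M^{-2(1-q)}$ times the product.

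\textbf{Matching the stated form.} The stated bound has the factors $q/(1-q)$, the exponent $1/q$ on the product, and $(\gamma_1 M)^{-(1/q-1)/\alpha}$. This is the shape of the argument in \cite[Lemma 14.2]{dick2022}, and I would reproduce it as follows.

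First, I would use Jensen's inequality $\sum a_k\le(\sum a_k^{q})^{1/q}$ for $q<1$. This converts the problem into bounding the counting function
\[
\#\{\veck: r_{\alpha,\vecgamma}(\veck)< L\}\le L^{2q}\prod_{j=1}^d\bigl(1+2\gamma_j^{2q}\zeta(2\alpha q)\bigr),
\]
which follows from Markov's inequality applied to the convergent sum above.

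Next, I would order the frequencies outside $\setA$ by increasing $r_{\alpha,\vecgamma}$. Write $C_q$ for the product and $r_{(n)}$ for the $n$-th smallest value. The counting bound gives $r_{(n)}\ge (n/C_q)^{1/(2q)}$, so $r_{(n)}^{-2}\le (C_q/n)^{1/q}$.

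Then I would sum the tail over $n>n_0$, where $n_0$ is the index of the first frequency outside $\setA$. An integral comparison gives
\[
\sum_{n>n_0}n^{-1/q}\le \frac{q}{1-q}\,n_0^{1-1/q}.
\]
This is where the factor $q/(1-q)$ and the power $(1/q-1)$ come from.

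Finally, I would bound $n_0$ from below by a one-dimensional count. The first coordinate alone has at least about $(\gamma_1 M)^{1/\alpha}$ integers $k_1$ with $|k_1|^\alpha/\gamma_1<M$, and all of these lie in $\setA$. Hence $n_0\gtrsim(\gamma_1M)^{1/\alpha}$, which produces the factor $(\gamma_1 M)^{-(1/q-1)/\alpha}$.

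\textbf{Main obstacle.} The hard step is getting exactly the stated constants: no stray factor $2$ from the symmetric count $2\lfloor\cdot\rfloor+1$, and correct handling of $M\ge1$ but possibly $\gamma_1 M<1$. I would first try to use $|\setA|\ge(\gamma_1 M)^{1/\alpha}$ directly, which is valid when $\gamma_1\le 1$ and $M\ge1$, together with the comparison $\sum_{n>n_0}n^{-1/q}\le\int_{n_0}^\infty x^{-1/q}\,dx$. If the constants resist, I would fall back on citing \cite[Lemma 14.2]{dick2022} verbatim, since the paper presents the lemma as quoted from there.
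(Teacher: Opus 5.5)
The paper gives no proof of this lemma; it only quotes \cite[Lemma 14.2]{dick2022}. Your second argument is the standard one behind that bound and yields exactly the stated constants. The steps are: order $\Z^d$ by increasing $r_{\alpha,\vecgamma}$; use $n\,r_{(n)}^{-2q}\le\sum_{m\le n}r_{(m)}^{-2q}\le C_q:=\prod_{j}(1+2\gamma_j^{2q}\zeta(2\alpha q))$ to get $r_{(n)}^{-2}\le (C_q/n)^{1/q}$; sum the tail $n>|\setA|$ against $\int_{|\setA|}^\infty x^{-1/q}\,dx=\frac{q}{1-q}|\setA|^{1-1/q}$; and bound $|\setA|$ below by a one-dimensional count.

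Small corrections. Take $n_0=|\setA|$, not the index of the first frequency outside $\setA$. The ``Jensen'' step is never used. Your factor-$2$ worry is unfounded. When $M>1$, $\vecz\in\setA$, and all vectors $(k_1,0,\dots,0)$ with $|k_1|<(\gamma_1M)^{1/\alpha}$ lie in $\setA$. Hence $|\setA|\ge 2\lceil(\gamma_1M)^{1/\alpha}\rceil-1\ge(\gamma_1M)^{1/\alpha}$, and this also covers $\gamma_1M<1$.

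Your Rankin step is correct and gives a strictly better power of $M$, since $2(1-q)-(1/q-1)/\alpha=(1-q)(2-1/(q\alpha))>0$. However, it does not imply the stated form in general; for instance, $q/(1-q)<1$ when $q<1/2$. So it is a detour.

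The genuine flaw is the boundary case $M=1$. There your claim that $|\setA|\ge(\gamma_1M)^{1/\alpha}$ holds ``when $\gamma_1\le1$ and $M\ge1$'' is false. Since $r_{\alpha,\vecgamma}(\veck)\ge1$ for every $\veck$ and $\setA$ is defined by the strict inequality $r_{\alpha,\vecgamma}(\veck)<M$, we get $\setA=\emptyset$ at $M=1$. Then $n_0=0$, and the integral comparison $\int_0^\infty x^{-1/q}\,dx$ diverges.

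This cannot be repaired, because the inequality itself fails at $M=1$. The left-hand side is at least $r_{\alpha,\vecgamma}^{-2}(\vecz)=1$. But take $d=1$, $\alpha=50$, $q=2/5$, $\gamma_1=10^{-3}$: the right-hand side equals $10^{0.09}\cdot\frac{2}{3}\cdot\bigl(1+2\cdot10^{-2.4}\zeta(40)\bigr)^{5/2}\approx0.84$.

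So your proof, and the lemma as transcribed, must assume $M>1$. This is harmless for the paper, where $M$ grows like a positive power of $N$. Alternatively, use the non-strict index set $\{\veck: r_{\alpha,\vecgamma}(\veck)\le M\}$, which always contains $\vecz$. Then the count $2\lfloor(\gamma_1M)^{1/\alpha}\rfloor+1\ge(\gamma_1M)^{1/\alpha}$ makes your argument work for all $M\ge1$.
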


    \begin{theorem}\label{thm:Linf}
    Let $\alpha > 1/2$ and $N$ be a prime number. Let $\vecg$ be constructed by the CBC algorithm (Algorithm \ref{alg:CBCg}), $M = \sup\limits_{1/\alpha < \lambda \le 2} \left( \frac{N}{2} \prod_{j=1}^d (1 + 2\gamma_j^\lambda \zeta(\alpha\lambda))^{-1} \right)^{1/\lambda}$, and $\{\vecy_s\}_{s=1}^{S}$ be constructed by Algorithm \ref{alg:adaptive_shift}. Then for any $\tau \in (0, 1]$, we have 
\begin{align*}
    e_{\infty}^{\text{wor}}(\mathcal{A}, \mathcal{K}_{d, \alpha, \vecgamma}) &\le C_{d, \vecgamma, \tau} (\log N)^{(d-1)/2} N^{-\alpha+1/2+\tau} \\
    &\le C_{d, \vecgamma, \tau} (\log N_{tot})^{(2\alpha-1/2-2\tau)d-1/2} N_{tot}^{-\alpha+1/2+\tau}.
\end{align*}
\end{theorem}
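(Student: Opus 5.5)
We split the pointwise error into a truncation part and an aliasing part. For $f$ in the unit ball and $\vecx \in [0,1]^d$, write
$$
f(\vecx)-\mathcal{A}(f)(\vecx) = \sum_{\veck \notin \setA} \widehat f(\veck) e^{2\pi\mathrm{i}\veck\cdot\vecx} + \sum_{j=1}^J \sum_{\veck \in \Gamma_{\alpha,\vecgamma,N}(\vecl^{(j)},\vecg)} \bigl(\widehat f(\veck)-\mathcal{B}_N(f,\veck,\{\vecy_s\},\vecz)\bigr) e^{2\pi\mathrm{i}\veck\cdot\vecx}.
$$
For the first term we use Cauchy--Schwarz, which gives at most $\bigl(\sum_{\veck\notin\setA} r_{\alpha,\vecgamma}^{-2}(\veck)\bigr)^{1/2}$. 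Lemma \ref{lem:sumr} with $q$ close to $1/(2\alpha)$ then bounds this by $C M^{-(1/q-1)/(2\alpha)}$. Since $M \gtrsim N^{\alpha-\epsilon'}$ by the choice of $M$ (take $\lambda$ near $1/\alpha$), this is $\mathcal{O}(N^{-\alpha+1/2+\tau})$ after tuning $q$ and $\lambda$ in terms of $\tau$.

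For the aliasing term, fix a fiber $\{\vecl_1,\dots,\vecl_v\}$. From the identity for $b_s(\vecz)$ we get $\mathbf b = B\mathbf x + \mathbf e$, where
$$
e_s=\sum_{\vech\in \vecl_1+\mathcal{L}_N^\perp(\vecg),\ \vech\notin\setA}\widehat f(\vech)e^{2\pi\mathrm{i}\vech\cdot\vecy_s}.
$$
Hence the coefficient error vector is $(B^HB)^{-1}B^H\mathbf e$. Lemma \ref{lem:G} gives $\|(B^HB)^{-1}B^H\|_2\le \sqrt{(1+t)S}/((1-t)S)$, and $\|\mathbf e\|_2\le \sqrt S\,E_j$ with $E_j:=\sum_{\vech\in \vecl_1+\mathcal{L}_N^\perp,\ \vech\notin\setA}|\widehat f(\vech)|$. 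The coefficient error vector therefore has $\ell_2$ norm at most $C_t E_j$. This is the point where the uniform shift and bounded conditioning give an $\ell_2$ bound independent of $S$. Passing to the sup norm over the fiber by Cauchy--Schwarz (the $\ell_1$ norm over $v\le R$ entries) costs a factor $\sqrt{R}$. This is the $\mathcal{O}(R^{1/2})$ penalty advertised in the introduction. The aliasing contribution is then at most $C_t\sqrt R\sum_j E_j$.

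Since the fibers partition $\setA$ and distinct fibers lie in distinct cosets of $\mathcal{L}_N^\perp(\vecg)$, $\sum_j E_j\le \sum_{\vech\notin\setA}|\widehat f(\vech)|$. This crude $\ell_1$ tail bound would lose the rate, so we instead use a weighted Cauchy--Schwarz over each coset:
$$
E_j\le \|f\|\Bigl(\sum_{\vech\in\vecl^{(j)}+\mathcal{L}_N^\perp\setminus\setA} r^{-2}_{\alpha,\vecgamma}(\vech)\Bigr)^{1/2}.
$$
Summing over $j\le J\le |\setA|\le N$ with Cauchy--Schwarz again gives $\sum_jE_j\le \sqrt{N}\bigl(\sum_{\vech\notin\setA}r_{\alpha,\vecgamma}^{-2}(\vech)\bigr)^{1/2}$. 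Lemma \ref{lem:sumr} then gives $\sqrt N\cdot N^{-\alpha+\tau}$, i.e. $N^{-\alpha+1/2+\tau}$. This step is where the $1/2$ loss enters, and it is the main delicate point: $q$ must be chosen so that $M^{-(1/q-1)/(2\alpha)}\le C N^{-\alpha+\tau}$. Because $q>1/(2\alpha)$, the exponent $(1/q-1)/(2\alpha)$ approaches $(2\alpha-1)/(2\alpha)$, which a priori falls short of $1$. I would try first to use the full Korobov decay together with $\rho(\vecg)\ge M$ to control the coset sums more sharply, and failing that accept the exponent delivered by Lemma \ref{lem:sumr} and absorb the discrepancy into $\tau$ and $C_{d,\vecgamma,\tau}$.

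Combining the pieces with $\sqrt R=\mathcal{O}((\log N)^{(d-1)/2})$ from Lemma \ref{lem:R} gives the first inequality. The second follows by substituting \eqref{eq:Ntot}, $N\ge CN_{tot}/(\log N_{tot})^{2d}$, into $N^{-\alpha+1/2+\tau}$, which produces the factor $(\log N_{tot})^{2d(\alpha-1/2-\tau)}$. Multiplying by $(\log N)^{(d-1)/2}\le(\log N_{tot})^{(d-1)/2}$ and collecting the exponents of $\log N_{tot}$ (the claimed $(2\alpha-1/2-2\tau)d-1/2$) completes the bound; this bookkeeping is routine.
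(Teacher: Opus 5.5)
\textbf{There is a genuine gap in how you sum the $E_j$.} Your overall plan matches the paper's: split into truncation and aliasing, get the $\sqrt{R}$ penalty from the bounded conditioning in Lemma \ref{lem:G} plus Cauchy--Schwarz over a fiber, and finish with Lemma \ref{lem:sumr}. Your per-fiber estimate is correct: the coefficient error on fiber $j$ is at most $C_t\sqrt{R}\,E_j$ in $\ell_1$, and this is slightly cleaner than the paper's route through the vectors $e_j(\veck)$.

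The problem comes next. You bound each $E_j$ by the \emph{full} norm $\|f\|_{\SpaceKor}$ times a coset tail, then apply Cauchy--Schwarz over $J\le N$ cosets. That introduces a spurious factor $\sqrt{N}$. You then compensate by asserting that Lemma \ref{lem:sumr} gives $\bigl(\sum_{\vech\notin\setA}r_{\alpha,\vecgamma}^{-2}(\vech)\bigr)^{1/2}\lesssim N^{-\alpha+\tau}$. This contradicts your own truncation computation: that tail is of order $M^{-1+1/(2\alpha)}\approx N^{-\alpha+1/2}$ up to $\tau$, which is exactly the exponent shortfall you notice at the end.

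So your argument actually gives $\sqrt{R}\,N^{-\alpha+1+\tau}$ for the aliasing term, off by a full $N^{1/2}$. This cannot be absorbed into $C_{d,\vecgamma,\tau}$ or into $\tau$, because the theorem is claimed for every $\tau\in(0,1]$. No sharper coset estimate via $\rho(\vecg)\ge M$ is needed.

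\textbf{The fix is the bound you dismissed as crude.} Distinct fibers lie in distinct cosets of $\mathcal{L}_N^\perp(\vecg)$, so
\[
\sum_{j=1}^J E_j\le\sum_{\vech\notin\setA}|\widehat f(\vech)|\le\|f\|_{\SpaceKor}\Bigl(\sum_{\vech\notin\setA}r_{\alpha,\vecgamma}^{-2}(\vech)\Bigr)^{1/2}.
\]
This is literally the truncation term and loses nothing. Equivalently, you can keep your per-coset Cauchy--Schwarz, provided you retain the coset-restricted weighted norms of $f$ and use that their squares sum to at most $\|f\|_{\SpaceKor}^2$.

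Either way you get the total bound $(1+C_t\sqrt{R})\,\|f\|_{\SpaceKor}\bigl(\sum_{\vech\notin\setA}r_{\alpha,\vecgamma}^{-2}(\vech)\bigr)^{1/2}$. This is the paper's final estimate, with $1/(1-t)$ in place of $C_t$, and your remaining steps (Lemma \ref{lem:R} and the $N_{tot}$ bookkeeping) then go through.

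The ``$1/2$ loss'' in the $L_\infty$ rate does not come from counting cosets. It comes from the tail $\sum_{r_{\alpha,\vecgamma}(\vech)\ge M}r_{\alpha,\vecgamma}^{-2}(\vech)\approx M^{-2+1/\alpha}$, that is, from Cauchy--Schwarz on the $\ell_1$ Fourier tail.
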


    \begin{proof}    
    Following the strategy of \cite[Theorem 4.2]{cai2025}, we decompose the error into a truncation error and an aliasing error:
$$
\|f - \mathcal{A}(f)\|_{L_\infty} \le \sum_{\veck \in \mathcal{A}_{\alpha,\vecgamma,M}} |\widehat{f}(\veck) - \mathcal{B}_N(f, \veck,\{\vecy_s\},\vecz)| + \sum_{\veck \in \mathbb{Z}^d \setminus \mathcal{A}_{\alpha,\vecgamma,M}} |\widehat{f}(\veck)|. 
$$
For the truncation error, by applying the Cauchy-Schwarz inequality, we have
$$
\begin{aligned}
\sum_{\veck \in \mathbb{Z}^d \setminus \mathcal{A}_{\alpha,\vecgamma,M}} |\widehat{f}(\veck)|
\le \|f\|_{\SpaceKor} \left( \sum_{\veck \in \mathbb{Z}^d \setminus \mathcal{A}_{\alpha,\vecgamma,M}} \frac{1}{(r_{\alpha,\vecgamma}(\veck))^2} \right)^{1/2}.
\end{aligned}
$$

For the aliasing error, let $Z_j := \{\veck \in \mathbb{Z}^d : \veck - \vecl^{(j)} \in \mathcal{L}_N^\perp(\vecg)\}$ denote the set of frequencies aliased with the $j$-th fiber representative $\vecl^{(j)}$. 
Let
$
e_j(\veck) = [B_j^H B_j]^{-1} B_j^H \boldsymbol{a}_{\veck},
$ where the vector $\boldsymbol{a}_{\veck}$ has components $\boldsymbol{a}_{\veck,s} = e^{2\pi \mathrm{i} \veck \cdot \vecy_s}$, $s = 1, 2, \ldots, S$. Following the algebraic derivation in \cite[Proof of Theorem 4.2]{cai2025}, the aliasing error is bounded by
$$
\begin{aligned}
\sum_{\veck \in \setA} |\widehat{f}(\veck) - \mathcal{B}_N(f, \veck,\{\vecy_s\},\vecz)| 
&\le \|f\|_{\SpaceKor}\sqrt{ \sum_{j=1}^J \sum_{\veck \in Z_j \setminus \setA} \frac{\|e_j(\veck)\|_2^2}{r_{\alpha,\vecgamma}^2(\veck)} }.
\end{aligned}
$$

Since each entry of the vector $B_j^H \boldsymbol{a}_{\veck} \in \mathbb{C}^{v_j}$ is a sum of $S$ phase factors, its magnitude is bounded by $S$, which trivially implies $\|B_j^H  \boldsymbol{a}_{\veck} \|_2^2 \le v_jS^2$.
By Lemma \ref{lem:G}, we have $\|(B_j^H B_j)^{-1}\|_2 \le \frac{1}{S(1-t)}$. Thus,
\begin{equation}\label{eq:ej}
    \|e_j(\veck)\|_2^2 \le \|[B_j^H B_j]^{-1}\|_2^2 \|B_j^H  \boldsymbol{a}_{\veck} \|_2^2 \le \frac{1}{S^2(1-t)^2} v_jS^2 = \frac{v_j}{(1-t)^2} \le \frac{R}{(1-t)^2}. 
\end{equation}

Substituting this bound back into the aliasing error summation yields
$$
\begin{aligned}
\sum_{j=1}^J\sum_{\veck \in Z_j \setminus \setA} \frac{\|e_j(\veck)\|_2^2}{r_{\alpha,\vecgamma}^2(\veck)}& = \frac{R}{(1-t)^2}\sum_{j=1}^J \sum_{\veck \in Z_j \setminus \setA} \frac{1}{r_{\alpha,\vecgamma}^2(\veck)} \\&\le \frac{R}{(1-t)^2}\sum_{\veck \in Z^d \setminus \setA} \frac{1}{r_{\alpha,\vecgamma}^2(\veck)} .
\end{aligned}
$$
Finally, combining both error components, we obtain
$$
\begin{aligned}
\|f - \mathcal{A}(f)\|_{L_\infty} &\le \sum_{\veck \in \mathcal{A}_{\alpha,\vecgamma,M}} |\widehat{f}(\veck) - \mathcal{B}_N(f, \veck)| + \sum_{\veck \in \mathbb{Z}^d \setminus \mathcal{A}_{\alpha,\vecgamma,M}} |\widehat{f}(\veck)| \\
& \le \|f\|_{\SpaceKor}\left(1+\frac{\sqrt{R}}{1-t}\right)\left( \sum_{\veck \in \mathbb{Z}^d \setminus \mathcal{A}_{\alpha,\vecgamma,M}} \frac{1}{(r_{\alpha,\vecgamma}(\veck))^2} \right)^{1/2} .
\end{aligned}
$$

By applying Lemma \ref{lem:sumr} to evaluate the truncation tail, substituting the choice of $M$ and using the bound of $N_{tot}$ by \eqref{eq:Ntot}, the asymptotic result follows immediately. This completes the proof.
\end{proof}

We now consider the randomized $L_2$ error
$$
e_2^{\text{ran}}(\mathcal{A}_{\bDelta}, \mathcal{K}_{d, \alpha, \vecgamma}) = \sup_{\substack{f \in \mathcal{K}_{d, \alpha, \vecgamma} \\ \|f\|_{\mathcal{K}_{d, \alpha, \vecgamma}} \le 1}} \sqrt{\mathbb{E}_{\bDelta} \|f - \mathcal{A}_{\bDelta}(f)\|_2^2},
$$ 
where $\mathcal{A}_{\bDelta}(f)$ is given by \eqref{eq:appf}.

\begin{theorem}\label{thm:L2}
    Let $\alpha > 1/2$ and $N$ be a prime number. Let $\vecg$ be constructed by the CBC algorithm (Algorithm \ref{alg:CBCg}), $M = \sup_{1/\alpha < \lambda \le 2} \left( \frac{N}{2} \prod_{j=1}^d (1 + 2\gamma_j^\lambda \zeta(\alpha\lambda))^{-1} \right)^{1/\lambda}$, and $\{\vecy_s\}_{s=1}^{S}$ be constructed by Algorithm \ref{alg:adaptive_shift}. Then for any $\tau \in (0, 1]$, we have 
\begin{equation}
    e_2^{\text{ran}}(\mathcal{A}_{\bDelta}, \mathcal{K}_{d, \alpha, \vecgamma}) \le C_{d, \vecgamma, \tau} (\log N)^{(d-1)/2} N^{-\alpha+\tau} \le C_{d, \vecgamma, \tau}(\log N_{tot})^{(2\alpha-2\tau+1/2)d-1/2} N_{tot}^{-\alpha+\tau}.
\end{equation}
\end{theorem}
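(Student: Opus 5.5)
Our plan is to follow the decomposition used for Theorem \ref{thm:Linf}, but now exploit Parseval and the randomization over $\bDelta$ to gain the extra factor $N^{-1/2}$. By orthogonality of the exponentials in $L_2([0,1]^d)$,
\begin{equation*}
\|f-\mathcal{A}_{\bDelta}(f)\|_2^2=\sum_{\veck\in\setA}|\widehat f(\veck)-\mathcal{B}_N(f,\veck,\{\vecy_s\},\bDelta)|^2+\sum_{\veck\notin\setA}|\widehat f(\veck)|^2 .
\end{equation*}
The second (truncation) term is bounded directly by $\|f\|^2_{\SpaceKor}M^{-2}$, since $r_{\alpha,\vecgamma}(\veck)\ge M$ outside $\setA$. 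For the first term we work fiber by fiber. Using the observation identity for $b_s(\bDelta)$ and the definition \eqref{eq:unified_solution}, the error vector for fiber $j$ equals $D(-\bDelta)\sum_{\veck\in Z_j\setminus\setA}\widehat f(\veck)e^{2\pi\mathrm{i}\veck\cdot\bDelta}e_j(\veck)$, with $e_j(\veck)=(B_j^HB_j)^{-1}B_j^H\boldsymbol a_{\veck}$ as in the proof of Theorem \ref{thm:Linf}; the exact fiber components are reproduced since $B_j$ has full column rank by Lemma \ref{lem:G}.

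Next we take $\mathbb{E}_{\bDelta}$ with $\bDelta$ uniform on $[0,1]^d$. Since $D(-\bDelta)$ is unitary it drops out of the norm, and expanding $\|\sum_{\veck}\widehat f(\veck)e^{2\pi\mathrm{i}\veck\cdot\bDelta}e_j(\veck)\|_2^2$ the cross terms carry $\mathbb{E}_{\bDelta}e^{2\pi\mathrm{i}(\veck-\veck')\cdot\bDelta}=\delta_{\veck,\veck'}$. Hence
\begin{equation*}
\mathbb{E}_{\bDelta}\sum_{\veck\in\setA}|\widehat f(\veck)-\mathcal{B}_N|^2=\sum_{j=1}^J\sum_{\veck\in Z_j\setminus\setA}|\widehat f(\veck)|^2\|e_j(\veck)\|_2^2\le\frac{R}{(1-t)^2}\sum_{\veck\notin\setA}|\widehat f(\veck)|^2 ,
\end{equation*}
using \eqref{eq:ej} and the disjointness of the $Z_j$. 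Combining gives $\mathbb{E}_{\bDelta}\|f-\mathcal{A}_{\bDelta}(f)\|_2^2\le(1+R/(1-t)^2)\|f\|^2_{\SpaceKor}M^{-2}$, i.e.\ $e_2^{\mathrm{ran}}\le(1+\sqrt R/(1-t))M^{-1}$. This replaces the $\ell_1$-type tail sum of Lemma \ref{lem:sumr} (which costs $N^{1/2}$) by a pure $\sup_{\veck\notin\setA} r^{-1}_{\alpha,\vecgamma}(\veck)$, explaining the improved exponent.

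Finally we insert the parameters. From the choice of $M$ with $\lambda$ close to $1/\alpha$ (taking $\lambda=1/(\alpha-\tau)$, admissible for $\tau$ small, with larger $\tau$ trivial by monotonicity), $M^{-1}\le C_{d,\vecgamma,\tau}N^{-\alpha+\tau}$, and Lemma \ref{lem:R} gives $\sqrt R=\mathcal{O}((\log N)^{(d-1)/2})$, yielding the first inequality. For the second, we use \eqref{eq:Ntot}: $N\ge CN_{tot}(\log N_{tot})^{-2d}$ and $\log N\le\log N_{tot}$, so $N^{-\alpha+\tau}\le C(\log N_{tot})^{2d(\alpha-\tau)}N_{tot}^{-\alpha+\tau}$; multiplying by $(\log N_{tot})^{(d-1)/2}$ gives exponent $(2\alpha-2\tau+1/2)d-1/2$. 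The main obstacle I expect is the bookkeeping in the $\bDelta$-expectation step: verifying that the random phase factors in $b_s(\bDelta)$ are exactly the ones making the cross terms vanish, and that $\zeta(\alpha\lambda)$ remains finite so the constant only blows up as $\tau\to0$.
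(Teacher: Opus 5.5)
Your proposal is correct and follows essentially the same route as the paper. Both arguments split the error by Parseval, bound the tail by $\|f\|^2_{\SpaceKor}M^{-2}$, use the uniform estimate \eqref{eq:ej} together with the vanishing of the $\bDelta$-cross terms to reach $(1+R/(1-t)^2)\|f\|^2_{\SpaceKor}M^{-2}$, and then apply Lemma \ref{lem:R} and \eqref{eq:Ntot}. The only difference is that you spell out the expectation computation and the choice $\lambda=1/(\alpha-\tau)$, which the paper delegates to \cite[Theorem 4.3]{cai2025} and leaves implicit.
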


\begin{proof}
    Following the strategy of \cite[Theorem 4.3]{cai2025} and utilizing the bounds established in Theorem \ref{thm:Linf}, we decompose the expected squared error as follows: 
$$
\mathbb{E}_{\bDelta} [\|f - \mathcal{A}_{\bDelta}(f)\|_2^2] = \sum_{\veck \in \setA} \mathbb{E}_{\bDelta} [|\widehat{f}(\veck) - \mathcal{B}_N(f, \veck, \{\vecy_s\}, \bDelta)|^2] + \sum_{\veck \in \mathbb{Z}^d \setminus \setA} |\widehat{f}(\veck)|^2.
$$
For the truncation error, we have
\begin{align}
\sum_{\veck \in \mathbb{Z}^d \setminus \setA} |\widehat{f}(\veck)|^2 & = \sum_{\veck \in \mathbb{Z}^d \setminus \setA} |\widehat{f}(\veck)|^2 \frac{r_{\alpha}^2(\veck)}{r_{\alpha}^2(\veck)} \label{L2_truncation} \\ & \le \frac{1}{M^2} \sum_{\veck \in \mathbb{Z}^d \setminus \setA} |\widehat{f}(\veck)|^2 r_{\alpha}^2(\veck) \nonumber \le \frac{\|f\|_{d,\alpha,\vecgamma}^2}{M^2}. \nonumber
\end{align}
For the aliasing error, applying the derivation from \cite[Theorem 4.3]{cai2025} and substituting the uniform bound \eqref{eq:ej} for $\|e_j(\veck)\|_2^2$, it follows that
$$
\begin{aligned}
\sum_{\veck \in \setA} \mathbb{E}_{\bDelta} \left[ |\widehat{f}(\veck) - \mathcal{B}_N(f, \veck, \{\vecy_s\}, \bDelta)|^2 \right] &\le \max_{j, \veck}\|e_j(k)\|_2^2 \sum_{j=1}^J \sum_{\substack{\veck - \vecl^{(j)} \in \mathcal{L}_N^\perp(\vecg) \\ \veck \notin \Gamma_{\alpha,\vecgamma,N}(\vecl^{(j)}; \vecg)}} |\widehat{f}(\veck)|^2 \\
&\le \frac{R}{(1-t)^2} \sum_{\veck \in \mathbb{Z}^d \setminus \setA} |\widehat{f}(\veck)|^2 \le \frac{\|f\|_{d,\alpha,\vecgamma}^2}{M^2}\frac{R}{(1-t)^2}.
\end{aligned}
$$
Combining these bounds, the total variance is bounded by
$$
\mathbb{E}_{\bDelta} [\|f - \mathcal{A}_{\bDelta}(f)\|_2^2] \leq \|f\|_{\mathcal{K}_{d,\alpha,\vecgamma}}^2\frac{1 + R/(1-t)^2}{M^2}.
$$
Taking the supremum over the unit ball in the Korobov space, we obtain the worst-case randomized error:
$$
e_2^{\mathrm{ran}}(\mathcal{A}_{\bDelta}, \mathcal{K}_{d,\alpha,\vecgamma}) \leq \frac{\sqrt{1 + R/(1-t)^2}}{M} = \mathcal{O}\left(N^{-\alpha+\tau}(\log N)^{(d-1)/2}\right).
$$
The asymptotic scaling with respect to $N_{tot}$ follows directly by \eqref{eq:Ntot}. This completes the proof.
\end{proof}

\begin{remark}
Our simplified uniform shift strategy yields a fundamental improvement. As derived in Theorem 2, we bound $\|B^H a_k\|_2 \le v^{1/2}S$, leading to $\|e_j(k)\|_2 \le \frac{\sqrt{R}}{1-t} = \mathcal{O}(R^{1/2})$. This strict reduction of the penalty term from $\mathcal{O}(R^{3/2})$ to $\mathcal{O}(R^{1/2})$ explicitly quantifies the theoretical advantage of the proposed framework in Section \ref{sec:simp}, which translates to a reduction by a factor of $\mathcal{O}((\log N)^{d-1})$. Analogous asymptotic improvements apply to the $L_2$ error.
\end{remark}

\section{Approximation on Non-periodic functions and Applications to Non-periodic Neumann Problems}
\label{sec:cosine_application}

The theoretical framework established in the preceding sections fundamentally relies on the periodicity of the functions in the Korobov space $\mathcal{K}_{d,\alpha,\vecgamma}$. However, in practical physical scenarios, such as the Poisson equation with Neumann boundary conditions discussed in Section~\ref{sec_Neumann}, the scalar fields are intrinsically non-periodic. In this section, we demonstrate that the proposed deterministic multiple-shift lattice algorithm can be structurally extended to the non-periodic half-period cosine space via a tent transformation. In Section~\ref{sec_Neumann} we show how this can then be used as a generic meshless spectral solver for high-dimensional boundary value problems.

\subsection{The Half-Period Cosine Space and Tent Transformation}

Let $\mathbb{Z}_{+} := \{0, 1, 2, \ldots\}$. For a frequency vector $\veck \in \mathbb{Z}_{+}^d$, the tensor product half-period cosine basis functions are defined as
\begin{equation}
    \phi_{\veck}(x) := \prod_{j=1}^{d} \phi_{k_j}(x_j) = (\sqrt{2})^{|\veck|_0} \prod_{j=1}^{d} \cos(\pi k_j x_j), \quad x \in [0,1]^d,
\end{equation}
where $|\veck|_0$ denotes the number of non-zero components of $\veck$. 

\begin{definition}
Let $d \in \mathbb{N}$, $\alpha > 1/2$ and $\vecgamma = (\gamma_1, \gamma_2, \dots)$ be a sequence of positive real numbers. The weighted half-period cosine space $\mathcal{C}_{d,\alpha,\vecgamma}$ is defined as the reproducing kernel Hilbert space of non-periodic functions $f: [0,1]^d \to \mathbb{R}$ represented by uniformly convergent cosine series $f(x) = \sum_{\veck \in \mathbb{Z}_{+}^d} \widetilde{f}(\veck) \phi_k(x)$, equipped with the norm
\begin{equation}
    \|f\|_{\mathcal{C}_{d,\alpha,\vecgamma}}^2 := \sum_{\veck \in \mathbb{Z}_{+}^d} |\widetilde{f}(\veck)|^2 r_{\alpha,\vecgamma}^2(\veck) < \infty,
\end{equation}
where $\widetilde{f}(\veck) = \int_{[0,1]^d} f(x) \phi_k(x) dx$ are the cosine coefficients.
\end{definition}

For the critical case $\alpha = 1$, the half-period cosine space is isomorphic to the (non-periodic) unanchored Sobolev space
of dominated mixed smoothness 1 \cite{dick2014lattice}. Specifically, $\mathcal{C}_{d,1,\vecgamma}$ is the reproducing kernel Hilbert space of functions defined on $[0,1]^d$ with square-integrable mixed first derivatives, equipped with the analytical norm
\begin{equation}
    \|f\|_{\mathcal{C}_{d,1,\vecgamma}}^2 := \sum_{\boldsymbol{u} \subseteq \{1, \ldots, d\}} \prod_{j\in \boldsymbol{u}}\frac{1}{\gamma_j} \int_{[0,1]^{|\boldsymbol{u}|}} \left( \int_{[0,1]^{d-|\boldsymbol{u}|}} \frac{\partial^{|\boldsymbol{u}|} f}{\partial \vecx_{\boldsymbol{u}}}(\vecx) \mathrm{d}\vecx_{-\boldsymbol{u}} \right)^2 \mathrm{d}\vecx_{\boldsymbol{u}}, \nonumber
\end{equation}
where $\frac{\partial^{|\boldsymbol{u}|} f}{\partial \vecx_{\boldsymbol{u}}}$ denotes the mixed first derivatives of $f$ with respect to the variables $x_j$ with $j \in \boldsymbol{u}$. 

The corresponding truncation index set is denoted by $\setA^{+} := \{\veck \in \mathbb{Z}_{+}^d : r_{\alpha,\vecgamma}(\veck) < M\}$.

To apply the lattice rule, we employ the tent transformation $\psi: [0,1] \to [0,1]$ given by $\psi(z) := 1 - |2z - 1|$, applied component-wise to map a point $z \in [0,1]^d$ to $x = \psi(z)$. It is a standard result that if $f \in \mathcal{C}_{d,\alpha,\vecgamma}$, the transformed function $g(z) := f(\psi(z))$ is 1-periodic and even with respect to $z_j=1/2$, and belongs to the Korobov space $\mathcal{K}_{d,\alpha,\vecgamma}$ with the isometric isomorphism $\|f\|_{\mathcal{C}_{d,\alpha,\vecgamma}} = \|g\|_{\mathcal{K}_{d,\alpha,\vecgamma}}$ \cite{kuo2021function}. 

The Fourier coefficients $\widehat{g}(\vech)$ for $\vech \in \mathbb{Z}^d$ are geometrically linked to the cosine coefficients $\widetilde{f}(\veck)$ via $\widehat{g}(\vech) = 2^{-|\vech|_0/2} \widetilde{f}(|\vech|)$. Consequently, we recover the cosine coefficients using the approximated Fourier coefficients $\widehat{c}_{\vech} \approx \widehat{g}(\vech)$ generated by our deterministic multiple-shift algorithm via the following identity \cite{suryanarayana2016}:
\begin{equation*}
    \widetilde{f}(\veck) \approx \widetilde{c}_{\veck} := \frac{1}{2^{|\veck|_0/2}} \sum_{|\vech|=\veck} \widehat{c}_{\vech}.
\end{equation*}

From \eqref{eq_zero_fiber} we obtain that the fiber containing $\vecz$ only has this one element. Hence we have the approximation $\widetilde{f}(\vecz) \approx \widetilde{c}_{\vecz} = \widehat{c}_{\vecz}$.

Now consider $\vech$ beloning to a fiber which does not contain $\vecz$. Since the target function $g(\bz)$ is strictly real-valued and the frequency truncation set is symmetric, the least-squares system for the negated frequencies $-\vech$ employs the complex conjugate of the original basis matrix. This structurally guarantees that the linear weights derived from the pseudo-inverse $(B^H B)^{-1} B^H$ are identically conjugated. Consequently, the reconstructed Fourier coefficients naturally satisfy the conjugate symmetry $\widehat{c}_{-\vech} = \overline{\widehat{c}_{\vech}}$. Thus, the recovered cosine coefficients $\widetilde{c}_{\veck}$ strictly lie in $\mathbb{R}$, yielding a real-valued cosine approximation 
\begin{equation}\label{expansion_tildef}
\mathcal{A}^{\mathrm{cos}}(f) = \widetilde{f}(\vecx) := \sum_{\veck \in \setA^{+}} \widetilde{c}_{\veck} \phi_{\veck}(\vecx).
\end{equation}

\subsection{Projection Equivalence via the Averaging Operator}

To strictly bound the approximation error in $\mathcal{C}_{d,\alpha,\vecgamma}$, we define the averaging operator $\mathcal{M}$ over the sign group $\{\pm 1\}^d$ for any function $u$ on $\mathbb{R}^d$:
\begin{equation*}
    \mathcal{M}[u](\bz) := \frac{1}{2^d} \sum_{\sigma \in \{\pm 1\}^d} u(\sigma(\bz)),
\end{equation*}
where $\sigma(\bz) = (\sigma_1 z_1, \ldots, \sigma_d z_d)$. The operator $\mathcal{M}$ is linear. By the triangle inequality and Jensen's inequality, it satisfies $\|\mathcal{M}[u]\|_\infty \le \|u\|_\infty$ and $\|\mathcal{M}[u]\|_2 \le \|u\|_2$. Since $\widehat{g}(\sigma(\vech)) = \widehat{g}(\vech)$, it follows that $\mathcal{M}[g] = g$. Furthermore, the algebraic construction of $\widetilde{c}_{\veck}$ ensures that the spatial approximations are equivalent under this projection.

\begin{lemma}[Projection Equivalence]
Let $\widetilde{g}(\bz) = \sum_{\vech \in \setA} \widehat{c}_{\vech} \exp(2\pi \mathrm{i} \vech \cdot \bz)$ be the approximation in $\mathcal{K}_{d,\alpha,\vecgamma}$ constructed via the deterministic multiple-shift algorithm. Then, the corresponding cosine approximation $\widetilde{f}(\vecx)$ satisfies:
\begin{equation*}
    \widetilde{f}(\psi(\bz)) = \mathcal{M}[\widetilde{g}](\bz).
\end{equation*}
\end{lemma}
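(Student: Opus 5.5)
The plan is a direct computation: expand both sides in exponentials, then match them coefficient by coefficient. Nothing else should be needed.

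First compute $\mathcal{M}[\widetilde{g}]$. Linearity of $\mathcal{M}$ gives
\[
\mathcal{M}[\widetilde{g}](\bz) = \sum_{\vech \in \setA} \widehat{c}_{\vech}\, \frac{1}{2^d}\sum_{\sigma\in\{\pm1\}^d} e^{2\pi\mathrm{i}\vech\cdot\sigma(\bz)} = \sum_{\vech \in \setA} \widehat{c}_{\vech} \prod_{j=1}^d \cos(2\pi h_j z_j).
\]
This follows from $\vech\cdot\sigma(\bz) = \sigma(\vech)\cdot\bz$ and the factorization $\frac{1}{2^d}\sum_\sigma \prod_j e^{2\pi\mathrm{i}\sigma_j h_j z_j} = \prod_j \frac{1}{2}(e^{2\pi\mathrm{i}h_jz_j}+e^{-2\pi\mathrm{i}h_jz_j})$. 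Note that $\cos(2\pi h_j z_j)$ depends only on $|h_j|$.

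Next group the sum by $\veck = |\vech| \in \setA^{+}$. The set $\setA$ is invariant under componentwise sign changes, because $r_{\alpha,\vecgamma}$ depends only on $|k_j|$. So the map $\vech\mapsto|\vech|$ sends $\setA$ onto $\setA^{+}$, and the preimage of $\veck$ is exactly $\{\vech : |\vech|=\veck\}$. This gives
\[
\mathcal{M}[\widetilde{g}](\bz) = \sum_{\veck\in\setA^{+}} \Big(\sum_{|\vech|=\veck}\widehat{c}_{\vech}\Big)\prod_{j=1}^d\cos(2\pi k_j z_j).
\]

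Now use the tent transformation. For integer $k$ and $z\in[0,1]$, $\cos(\pi k\,\psi(z)) = \cos(2\pi k z)$. When $z\le 1/2$ we have $\psi(z)=2z$. When $z>1/2$ we have $\psi(z) = 2-2z$ and $\cos(\pi k(2-2z)) = \cos(2\pi k - 2\pi k z) = \cos(2\pi k z)$. Hence
\[
\phi_{\veck}(\psi(\bz)) = 2^{|\veck|_0/2}\prod_j\cos(2\pi k_j z_j).
\]
Substituting the definition $\widetilde{c}_{\veck} = 2^{-|\veck|_0/2}\sum_{|\vech|=\veck}\widehat{c}_{\vech}$ into $\widetilde{f}(\psi(\bz)) = \sum_{\veck\in\setA^+}\widetilde{c}_{\veck}\phi_{\veck}(\psi(\bz))$, the factors $2^{\mp|\veck|_0/2}$ cancel. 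What remains is exactly the grouped expression above, which proves the identity.

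The main point needing care is the regrouping step, specifically the sign-symmetry of $\setA$, which ensures every $\vech$ with $|\vech|=\veck$ is summed exactly once. The identity does not require any symmetry of the $\widehat{c}_{\vech}$ themselves, so conjugate symmetry is not needed here; it only matters for real-valuedness. Whether $\bz$ is taken in $[0,1]^d$ or all of $\R^d$ does not matter, since both sides are 1-periodic.
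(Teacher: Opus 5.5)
Your proposal is correct and follows essentially the paper's route in the reverse direction. The paper substitutes the cited identity $\phi_{\veck}(\psi(\bz)) = 2^{|\veck|_0/2}\,2^{-d}\sum_{\sigma}\exp(2\pi\mathrm{i}\sigma(\veck)\cdot\bz)$ into $\widetilde{f}$, cancels the normalization and regroups over the sign-symmetric set $\setA$, whereas you compute $\mathcal{M}$ of each exponential as $\prod_j\cos(2\pi h_j z_j)$, group by $|\vech|$, and verify $\cos(\pi k\psi(z))=\cos(2\pi k z)$ directly; the ingredients are the same, and your remark that the cosine product depends only on $|\vech|$ makes the paper's terse sign-flip rearrangement step transparent.
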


\begin{proof}
By \cite[Lemma 1]{cools2016tent}, we have $\phi_{\veck}(\psi(\bz)) = \frac{(\sqrt{2})^{|\veck|_0}}{2^d} \sum_{\sigma \in \{\pm 1\}^d} \exp(2\pi \mathrm{i} \sigma(\veck) \cdot \bz)$. Substituting this into the expansion \eqref{expansion_tildef} of $\widetilde{f}$ yields:
\begin{align*}
    \widetilde{f}(\psi(\bz)) &= \sum_{\veck \in \setA^{+}} \left[ \frac{1}{2^{|\veck|_0/2}} \sum_{|\vech|=\veck} \widehat{c}_{\vech} \right] \left[ \frac{(\sqrt{2})^{|\veck|_0}}{2^d} \sum_{\sigma \in \{\pm 1\}^d} \exp(2\pi \mathrm{i} \sigma(\veck) \cdot \bz) \right] \\
    &= \sum_{\veck \in \setA^{+}} \sum_{|\vech|=\veck} \widehat{c}_{\vech} \frac{1}{2^d} \sum_{\sigma \in \{\pm 1\}^d} \exp(2\pi \mathrm{i} \sigma(\veck) \cdot \bz) \\ & = \frac{1}{2^d} \sum_{\sigma \in \{\pm 1\}^d} \sum_{\veck \in \setA^{+}} \sum_{|\vech|=\veck} \widehat{c}_{\vech} \exp(2\pi \mathrm{i} \veck \cdot \sigma(\bz)).
\end{align*}
For any fixed $\veck$ and a specific $\vech$ satisfying $|\vech|=\veck$, the inner product satisfies $\veck \cdot \sigma(\bz) = \vech \cdot \sigma'(\sigma(\bz))$ for some sign vector $\sigma' \in \{\pm 1\}^d$ such that $\vech = \sigma'(\veck)$. Since summing over all $\sigma \in \{\pm 1\}^d$ is invariant under component-wise sign flips of the frequency vector, we have $\sum_{\sigma} \exp(2\pi \mathrm{i} \veck \cdot \sigma(\bz)) = \sum_{\sigma} \exp(2\pi \mathrm{i} \vech \cdot \sigma(\bz))$. This allows the sums to be seamlessly rearranged over the entire symmetric index space $\setA$:
\begin{equation*}
    \widetilde{f}(\psi(\bz)) = \frac{1}{2^d} \sum_{\sigma \in \{\pm 1\}^d} \sum_{\vech \in \setA} \widehat{c}_{\vech} \exp(2\pi \mathrm{i} \vech \cdot \sigma(\bz)) = \frac{1}{2^d} \sum_{\sigma \in \{\pm 1\}^d} \widetilde{g}(\sigma(\bz)) = \mathcal{M}[\widetilde{g}](\bz).
\end{equation*}
This completes the proof.
\end{proof}

\subsection{Error Bounds in Cosine Space}
With the projection equivalence established, the approximation error in the non-periodic space is rigorously bounded by the error in the intermediate periodic space. Note that the logarithmic penalty factor associated with the maximum fiber length $R$ persists in this projection.

\begin{theorem}[Optimal Convergence in $\mathcal{C}_{d,\alpha,\vecgamma}$]\label{thm:errf}
Let $\widetilde{f}$ be the reconstructed function in the half-period cosine space using the deterministic multiple-shift strategy. For any $\tau \in (0, 1]$, the approximation satisfies the optimal worst-case and randomized asymptotic bounds corresponding to the Korobov algorithm:
\begin{align*}
    e_{\infty}^{\mathrm{wor}}(\mathcal{A}^{\mathrm{cos}}, \mathcal{C}_{d,\alpha,\vecgamma}) &\le C_{d, \vecgamma, \tau} (\log N)^{(d-1)/2} N^{-\alpha+1/2+\tau} \nonumber \\
    &\le C_{d, \vecgamma, \tau} (\log N_{tot})^{(2\alpha-2\tau-1/2)d-1/2} N_{tot}^{-\alpha+1/2+\tau},
\end{align*}
and
\begin{align*}
    e_2^{\mathrm{ran}}(\mathcal{A}_{\bDelta}^{\mathrm{cos}}, \mathcal{C}_{d,\alpha,\vecgamma}) &\le C_{d, \vecgamma, \tau} (\log N)^{(d-1)/2} N^{-\alpha+\tau} \nonumber \\
    &\le C_{d, \vecgamma, \tau}(\log N_{tot})^{(2\alpha-2\tau+1/2)d-1/2} N_{tot}^{-\alpha+\tau}.
\end{align*}
\end{theorem}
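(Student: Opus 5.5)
The plan is to reduce both bounds to Theorem~\ref{thm:Linf} and Theorem~\ref{thm:L2}, using the tent transformation isometry and the Projection Equivalence lemma. Fix $f \in \mathcal{C}_{d,\alpha,\vecgamma}$ with $\|f\|_{\mathcal{C}_{d,\alpha,\vecgamma}} \le 1$ and set $g = f\circ\psi$. By the cited isometry, $g \in \mathcal{K}_{d,\alpha,\vecgamma}$ with $\|g\|_{\mathcal{K}_{d,\alpha,\vecgamma}} \le 1$. The periodic algorithm applied to the samples of $g$ produces exactly the approximation $\widetilde{g} = \mathcal{A}(g)$ (or $\mathcal{A}_{\bDelta}(g)$ in the randomized case), and the cosine approximation is built from the same coefficients $\widehat{c}_{\vech}$.

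\textbf{$L_\infty$ bound.} Since $\psi:[0,1]^d\to[0,1]^d$ is surjective, we have $\sup_{\vecx}|f(\vecx)-\widetilde{f}(\vecx)| = \sup_{\bz}|f(\psi(\bz))-\widetilde{f}(\psi(\bz))|$. Then use $f(\psi(\bz)) = g(\bz) = \mathcal{M}[g](\bz)$ together with the Projection Equivalence $\widetilde{f}(\psi(\bz)) = \mathcal{M}[\widetilde{g}](\bz)$. By linearity this difference equals $\mathcal{M}[g-\widetilde{g}](\bz)$, and contractivity of $\mathcal{M}$ in $L_\infty$ bounds it by $\|g-\mathcal{A}(g)\|_\infty$. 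Taking the supremum over the unit ball and invoking Theorem~\ref{thm:Linf} gives the first bound. The $N_{tot}$ form follows from \eqref{eq:Ntot}, exactly as before.

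\textbf{$L_2$ bound.} For fixed $\bDelta$, the same identity gives $f\circ\psi - \widetilde{f}_{\bDelta}\circ\psi = \mathcal{M}[g-\mathcal{A}_{\bDelta}(g)]$. The tent map is measure preserving: each coordinate map $z\mapsto 1-|2z-1|$ pushes Lebesgue measure on $[0,1]$ to Lebesgue measure. Hence $\|f-\widetilde{f}_{\bDelta}\|_{L_2([0,1]^d)} = \|(f-\widetilde{f}_{\bDelta})\circ\psi\|_{L_2}$, which is at most $\|g-\mathcal{A}_{\bDelta}(g)\|_{L_2}$. Squaring, taking $\mathbb{E}_{\bDelta}$, and applying Theorem~\ref{thm:L2} yields the randomized bound.

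\textbf{Main obstacle.} The delicate point is justifying the Projection Equivalence in the randomized setting. With $\bDelta \neq \vecz$ the sampling points $\{n\vecg/N+\vecy_s+\bDelta\}$ are no longer symmetric, so the conjugate symmetry $\widehat{c}_{-\vech}=\overline{\widehat{c}_{\vech}}$ may fail. However, the lemma's proof uses only the definition $\widetilde{c}_{\veck}=2^{-|\veck|_0/2}\sum_{|\vech|=\veck}\widehat{c}_{\vech}$ and the symmetry of $\setA$, never the realness of the coefficients. So the identity holds for every $\bDelta$, and realness of $\widetilde{f}$ is not needed for the error bound. I would also check that $\mathcal{M}$ is well defined on $\widetilde{g}$ as a $1$-periodic function, so that $\sigma(\bz)$ can be taken modulo $1$. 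This holds because $\widetilde{g}$ is a trigonometric polynomial, and $\mathcal{M}[g]=g$ follows from the evenness of $g$.
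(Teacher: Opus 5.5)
Your proposal is correct and follows the paper's proof. Both arguments write $(f-\widetilde{f})\circ\psi=\mathcal{M}[g-\widetilde{g}]$ using $\mathcal{M}[g]=g$ and the Projection Equivalence, then combine the contractivity of $\mathcal{M}$ in $L_\infty$ and $L_2$ with the isometry $\|f\|_{\mathcal{C}_{d,\alpha,\vecgamma}}=\|g\|_{\mathcal{K}_{d,\alpha,\vecgamma}}$ to reduce to Theorems~\ref{thm:Linf} and~\ref{thm:L2}; your explicit use of the surjectivity and measure preservation of $\psi$ fills in a step the paper leaves implicit in the $L_2$ case. One small correction to your side remark: $\widehat{c}_{-\vech}=\overline{\widehat{c}_{\vech}}$ does not fail for $\bDelta\neq\vecz$, because it holds for every $\bDelta$ and depends only on $g$ being real and the fiber of $-\vecl_1$ being $-\Gamma_{\alpha,\vecgamma,N}(\vecl_1;\vecg)$, not on symmetric sampling points (the observations, $B$ and $D(-\bDelta)$ are all conjugated), but as you note, it is not needed for the bound.
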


\begin{proof} 
For a given $\vecx \in [0,1]^d$ let $\bz \in [0,1]^d$ be such that $\vecx = \psi(\bz)$. Let $E(\bz) = g(\bz) - \widetilde{g}(\bz)$ denote the approximation error in the Korobov space.  Since the target function satisfies $\mathcal{M}[g] = g$ and the approximation satisfies $\widetilde{f}(\psi(\bz)) = \mathcal{M}[\widetilde{g}](\bz)$, the pointwise error is strictly bounded by:
\begin{equation*}
    |f(\vecx) - \widetilde{f}(\vecx)| = |f(\psi(\bz)) - \widetilde{f}(\psi(\bz))| = |g(\bz) - \mathcal{M}[\widetilde{g}](\bz)| = |\mathcal{M}[g - \widetilde{g}](\bz)| \le \|\mathcal{M}[E]\|_{\infty} \le \|E\|_{\infty}.
\end{equation*}
Taking the supremum over the unit ball $\|f\|_{\mathcal{C}_{d,\alpha,\vecgamma}} \le 1$ and applying the isometric isomorphism $\|f\|_{\mathcal{C}_{d,\alpha,\vecgamma}} = \|g\|_{\mathcal{K}_{d,\alpha,\vecgamma}}$ directly yields the worst-case bound. For the randomized setting, the mean square error follows identically from the operator property $\|\mathcal{M}[E]\|_2 \le \|E\|_2$. Thus, the bounds with respect to both the base lattice size $N$ and the total sampling cost $N_{tot}$ are strictly anchored to the algebraic rates established for the Korobov space in Theorems \ref{thm:Linf} and \ref{thm:L2}, preserving the dimension-dependent logarithmic penalties. 
\end{proof}

\subsection{Spectral Method for Non-periodic Neumann Problems}\label{sec_Neumann}
The strict error bounds established above directly authorize the use of $\mathcal{A}_{\Delta}^{\mathrm{cos}}$ as a generic meshless spectral solver for physical boundary value problems. Non-homogeneous Neumann boundary conditions can be analyzed similarly \cite{suryanarayana2016}. Consider the Poisson problem with homogeneous Neumann boundary conditions:
\begin{equation*}
    \nabla^2 u(x) = f(x), \quad x \in (0,1)^d,
\end{equation*}
subject to $\frac{\partial u}{\partial n} = 0$ on $\partial(0,1)^d$. By the spectral decomposition of the negative Laplacian, the exact solution $u(x)$ is given by:
\begin{equation}
    u(x) = \widehat{u}(0) - \sum_{k \in \mathbb{Z}_{+}^d \setminus \{0\}} \frac{\widetilde{f}(k)}{\lambda_k} \phi_k(x),
\end{equation}
where $\lambda_k = \pi^2 \sum_{j=1}^d k_j^2$ are the eigenvalues, and the mean value $\widehat{u}(0) = \int_{[0,1]^d} u(x) dx$ is assumed known. The numerical solution is constructed by substituting the source function coefficients with our lattice approximation $\widetilde{c}_k$:
\begin{equation}
    \widetilde{u}(x) = \widehat{u}(0) - \sum_{k \in \setA^{+} \setminus \{0\}} \frac{\widetilde{c}_k}{\lambda_k} \phi_k(x).
\end{equation}

\begin{remark}
It is well-known that the solution to the Poisson equation with Neumann boundary conditions is unique only up to an additive constant. To fix the solution, we assume that $\widehat{u}(0)$ is known. This assumption is physically reasonable and consistent with real-world applications. In many physical scenarios modeled by Neumann problems---such as closed-system diffusion or incompressible flow---the mean value $\widehat{u}(0)$ is not arbitrary but is strictly determined by global conservation laws (e.g., conservation of total mass or global pressure constraints) \cite{strang2007}. Therefore, treating $\widehat{u}(0)$ as a given prior is a standard practice in solving such boundary value problems.
\end{remark}

\begin{theorem}[Convergence of the PDE Solution]\label{thm:erru}
Let the source function $f \in \mathcal{C}_{d,\alpha,\vecgamma}$ and assume $\widehat{u}(0)$ is given. For any $\tau \in (0, 1]$, the root-mean-square error of the approximate PDE solution $\widetilde{u}$ achieves the optimal algebraic decay inherited from the source approximation:
\begin{equation*}
\begin{aligned}
    \sqrt{\mathbb{E}_{\Delta}\|u - \widetilde{u}\|_2^2} &\le \frac{C_{d, \vecgamma, \tau}}{\pi^2} (\log N)^{(d-1)/2} N^{-\alpha +\tau} \\
    &\le \frac{C_{d, \vecgamma, \tau}}{\pi^2}(\log N_{tot})^{(2\alpha-2\tau+1/2)d-1/2} N_{tot}^{-\alpha+\tau}.
\end{aligned}
\end{equation*}
An analogous convergence bound can be straightforwardly established for the worst-case $L_\infty$ error using the corresponding bounds on $f$.
\end{theorem}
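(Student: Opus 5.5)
The plan is to reduce the PDE error to the source approximation error in $\mathcal{C}_{d,\alpha,\vecgamma}$, mode by mode, using orthonormality of the cosine basis. Since $\widehat{u}(0)$ is given exactly, subtracting the two expansions gives
\begin{equation*}
u - \widetilde{u} = -\sum_{\veck \in \setA^{+}\setminus\{\vecz\}} \frac{\widetilde{f}(\veck)-\widetilde{c}_{\veck}}{\lambda_{\veck}}\phi_{\veck} - \sum_{\veck \in \mathbb{Z}_+^d\setminus \setA^{+}} \frac{\widetilde{f}(\veck)}{\lambda_{\veck}}\phi_{\veck}.
\end{equation*}
The functions $\{\phi_{\veck}\}_{\veck\in\mathbb{Z}_+^d}$ form an orthonormal basis of $L_2([0,1]^d)$. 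Parseval's identity therefore gives
\begin{equation*}
\|u-\widetilde{u}\|_2^2 = \sum_{\veck\neq\vecz} \lambda_{\veck}^{-2}\,\bigl|\widetilde{f}(\veck)-\widetilde{c}_{\veck}\,\mathbf{1}_{\setA^{+}}(\veck)\bigr|^2 .
\end{equation*}
For every $\veck\in\mathbb{Z}_+^d\setminus\{\vecz\}$ some $k_j\ge 1$, so $\lambda_{\veck}=\pi^2\sum_j k_j^2\ge \pi^2$ and $\lambda_{\veck}^{-2}\le \pi^{-4}$.

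Dropping the $\lambda_{\veck}^{-2}$ factors in this way gives
\begin{equation*}
\|u-\widetilde{u}\|_2^2 \le \pi^{-4}\sum_{\veck\in\mathbb{Z}_+^d}\bigl|\widetilde{f}(\veck)-\widetilde{c}_{\veck}\mathbf{1}_{\setA^{+}}(\veck)\bigr|^2 = \pi^{-4}\|f-\widetilde{f}\|_2^2 .
\end{equation*}
The right-hand side is the $L_2$ error of the cosine approximation $\mathcal{A}^{\mathrm{cos}}_{\bDelta}(f)$ from \eqref{expansion_tildef}. The term $\veck=\vecz$ only enlarges the right-hand side, so including it is harmless. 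Taking $\mathbb{E}_{\bDelta}$ and square roots yields
\begin{equation*}
\sqrt{\mathbb{E}_{\bDelta}\|u-\widetilde{u}\|_2^2}\le \pi^{-2}\sqrt{\mathbb{E}_{\bDelta}\|f-\mathcal{A}^{\mathrm{cos}}_{\bDelta}(f)\|_2^2}.
\end{equation*}
Theorem \ref{thm:errf} (randomized part) then gives both stated bounds, the one in $N$ and the one in $N_{tot}$, with the same constant $C_{d,\vecgamma,\tau}$ divided by $\pi^2$. This requires taking $\|f\|_{\mathcal{C}_{d,\alpha,\vecgamma}}\le 1$, or else scaling the constant by the norm.

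The one point needing care is that $\widetilde{c}_{\veck}$ depends on $\bDelta$ and the expectation is taken over $\bDelta$. No cross terms arise, because the Parseval identity holds for each fixed $\bDelta$ before the expectation is taken. I also need $\widetilde{c}_{\veck}$ to be well defined, which holds by the Gram invertibility in Lemma \ref{lem:G}.

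For the $L_\infty$ remark, orthogonality is unavailable. I would first try to bound $\sup_{\vecx}|u-\widetilde{u}|$ by $\sum_{\veck}\lambda_{\veck}^{-1}(\sqrt2)^{|\veck|_0}|\widetilde{f}(\veck)-\widetilde{c}_{\veck}|$. The difficulty is that $(\sqrt2)^{|\veck|_0}\le 2^{d/2}$ costs a dimension-dependent factor, and each individual coefficient error must be controlled via the Korobov worst-case argument of Theorem \ref{thm:Linf}. The coefficient errors satisfy $|\widetilde{f}(\veck)-\widetilde{c}_{\veck}|\le 2^{-|\veck|_0/2}\sum_{|\vech|=\veck}|\widehat{g}(\vech)-\widehat{c}_{\vech}|$. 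This cancels the $(\sqrt2)^{|\veck|_0}$ factor, so the bound reduces to the $\ell_1$ coefficient-error sum already estimated in that proof, multiplied by $\pi^{-2}$.
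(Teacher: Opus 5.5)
Your proposal is correct and follows the paper's proof: Parseval in the orthonormal cosine basis, the bound $\lambda_{\veck}\ge\pi^2$ for $\veck\neq\vecz$, and reduction to the randomized bound of Theorem \ref{thm:errf}. You also make explicit two things the paper leaves implicit: the tail $\veck\notin\setA^{+}$, and the $L_\infty$ remark, where the factor $(\sqrt2)^{|\veck|_0}$ cancels against the $2^{-|\veck|_0/2}$ in $\widetilde{c}_{\veck}$ and the estimate reduces to $\pi^{-2}$ times the $\ell_1$ sum bounded in Theorem \ref{thm:Linf}.
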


\begin{proof}
By Parseval's identity for the orthonormal cosine basis, the squared $L_2$ error is isolated to the non-zero frequency components:
\begin{equation*}
    \mathbb{E}_{\Delta} \|u - \widetilde{u}\|_2^2 = \mathbb{E}_{\Delta} \sum_{k \neq 0} \frac{|\widetilde{f}(k) - \widetilde{c}_k|^2}{\lambda_k^2}.
\end{equation*}

For all $k \neq 0$, the Laplacian eigenvalues satisfy $\lambda_k \ge \pi^2$. Extracting this physical smoothing factor yields $1/\lambda_k^2 \le 1/\pi^4$. Consequently:
\begin{equation*}
   \mathbb{E}_{\Delta} \|u - \widetilde{u}\|_2^2 \le \frac{1}{\pi^4} \mathbb{E}_{\Delta} \sum_{k \neq 0} |\widetilde{f}(k) - \widetilde{c}_k|^2 \le \frac{1}{\pi^4} \mathbb{E}_{\Delta} \|f - \widetilde{f}\|_2^2.
\end{equation*}
Substituting the bound $e_2^{\mathrm{ran}}(\mathcal{A}_{\Delta}^{\mathrm{cos}}, \mathcal{C}_{d,\alpha,\vecgamma})$ from Theorem 4 and taking the square root completes the proof.
\end{proof}

\section{Numerical Experiments}\label{sec:numer}
In this section, we present numerical experiments to systematically evaluate the empirical performance of the proposed deterministic adaptive multiple-shift algorithm. 

\subsection{Periodic Test Function}
We evaluate the algorithm using a highly smooth periodic test function commonly analyzed in the literature \cite{cai2024l_2,pan2025universal,pan2025}:
$$
f_1(\vecx) = \prod_{j=1}^d \left( x_j - \frac{1}{2} \right)^2 \sin(2\pi x_j - \pi).
$$
We consider dimensions $d=2$ and $d=4$ to observe the dimensional influence. This test function belongs to the space $\mathcal{K}_{d,5/2-\epsilon,\vecgamma}$ for arbitrarily small $\epsilon > 0$. Accordingly, we set the smoothness parameter $\alpha = 5/2$. The weights are fixed at $\gamma_j = 2^{(1-j)/10}$ and the construction parameter $t = 0.95$.

We first investigate the empirical sampling cost of the adaptive strategy. As shown in Figure \ref{fig:S_growth_analysis}, the required number of shifts $S$ exhibits an empirical growth well-approximated by a quadratic polynomial in $\log_2 N$. The observable increase in the quadratic coefficient from $d=2$ to $d=4$ reflects the internal algorithmic transition from the polynomial strategy to the single-lattice strategy as the geometric capacity constraints tighten.

With this empirical sampling cost established, we analyze the approximation errors for $f_1$ in Figure \ref{fig:convergence_func1}. As demonstrated in the left column, the empirical errors plotted against the base lattice size $N$ perfectly align with the theoretical optimal rates for $d=2$: specifically $\mathcal{O}(N^{-\alpha})$ for the $L_2$ error and $\mathcal{O}(N^{-\alpha+1/2})$ for the $L_\infty$ error. For $d=4$, the convergence curves initially exhibit a pre-asymptotic phenomenon driven by the dimension-dependent logarithmic penalty $\mathcal{O}((\log N)^{(d-1)/2})$. However, as $N$ increases, the asymptotic tail strictly accelerates toward the optimal theoretical slope. 

When measuring the error against the total sampling cost $N_{tot} = N \times S$ (Figure \ref{fig:convergence_func1}, right column), the $\mathcal{O}((\log N)^2)$ inflation of $S$ introduces a a quantifiable pre-asymptotic drag. For $d=2$, the observed rate temporarily reduces to approximately $\mathcal{O}(N_{tot}^{-3\alpha/4})$, which is consistent with empirical findings in recent randomized frameworks \cite{pan2025universal,pan2025}. For $d=4$, compounded by the larger quadratic coefficient, this drag reduces the apparent rate further to $\mathcal{O}(N_{tot}^{-\alpha/2})$. Nevertheless, because the deterministic shifts $\{\vecy_s\}_{s=1}^S$ are exclusively designed to algebraically guarantee system invertibility without corrupting the fundamental spatial quadrature accuracy.  As clearly observed in the left column, the convergence curve for $d=4$ with respect to $N$ already exhibits an accelerated decay in its asymptotic tail. This empirical acceleration with respect to $N$ strongly indicates that the $-\alpha/2$ degradation currently observed against $N_{tot}$ is strictly a temporary pre-asymptotic phenomenon. It validates our theoretical expectation: as $N \to \infty$ and the logarithmic penalty from $S$ is asymptotically absorbed, the convergence curve with respect to the total cost $N_{tot}$ will recover the optimal $-\alpha$ rate.

\vspace{-3mm}
\begin{figure}[htbp]
        \centering
        \includegraphics[width=0.5\textwidth]{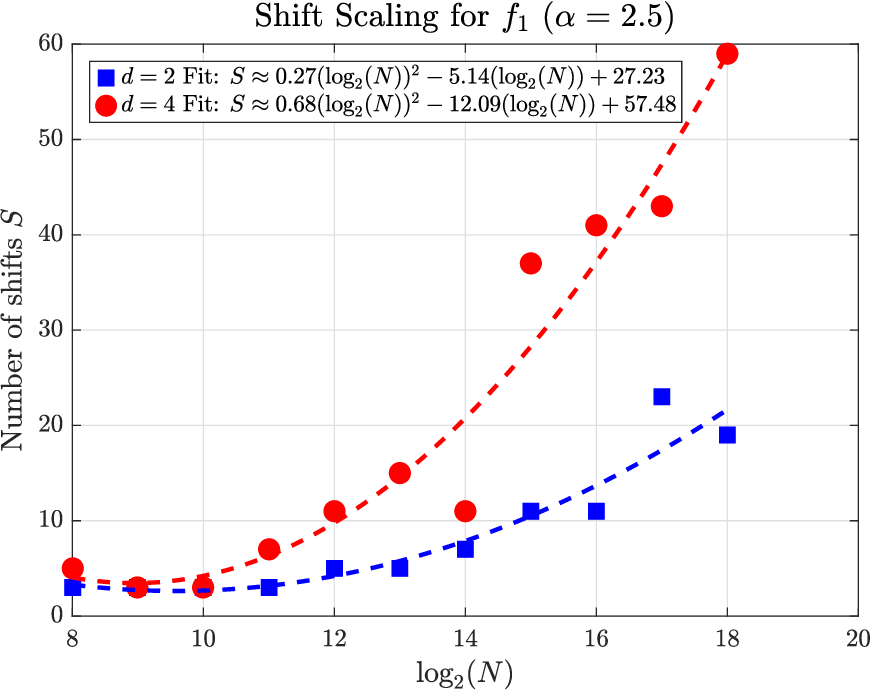}
    \caption{Empirical growth of the required number of shifts $S$ with respect to $\log_2(N)$.}
    \label{fig:S_growth_analysis}
    \vspace{-3mm}
\end{figure}

\vspace{-3mm}
\begin{figure}[htbp]
    \centering
    \begin{subfigure}{0.47\textwidth}
        \centering
        \includegraphics[width=\textwidth]{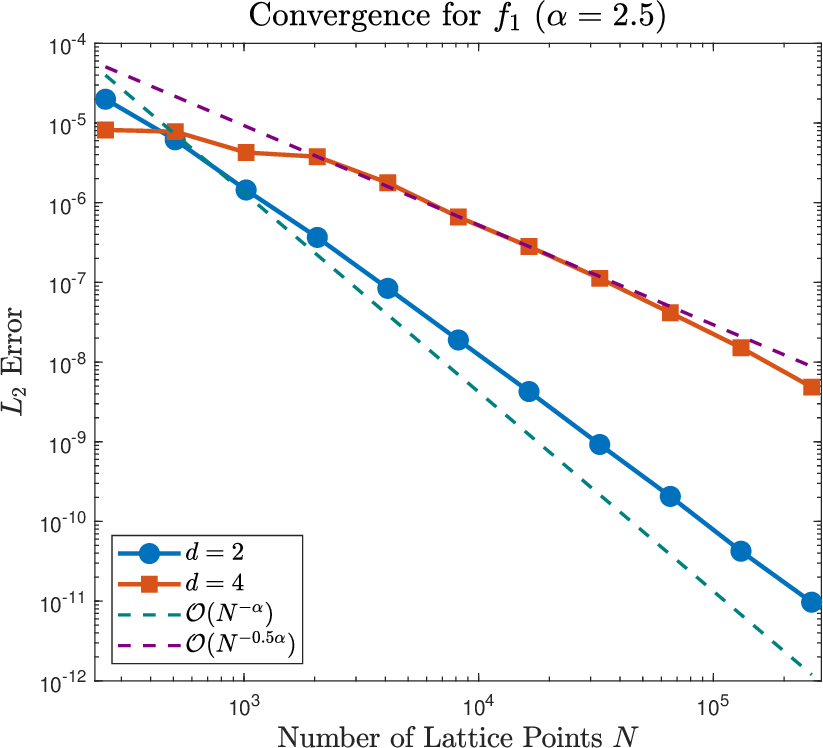} 
        \caption{$L_2$ error w.r.t base lattice size $N$}
        \label{subfig:f1_L2_N}
    \end{subfigure}
    \hfill
    \begin{subfigure}{0.47\textwidth}
        \centering
        \includegraphics[width=\textwidth]{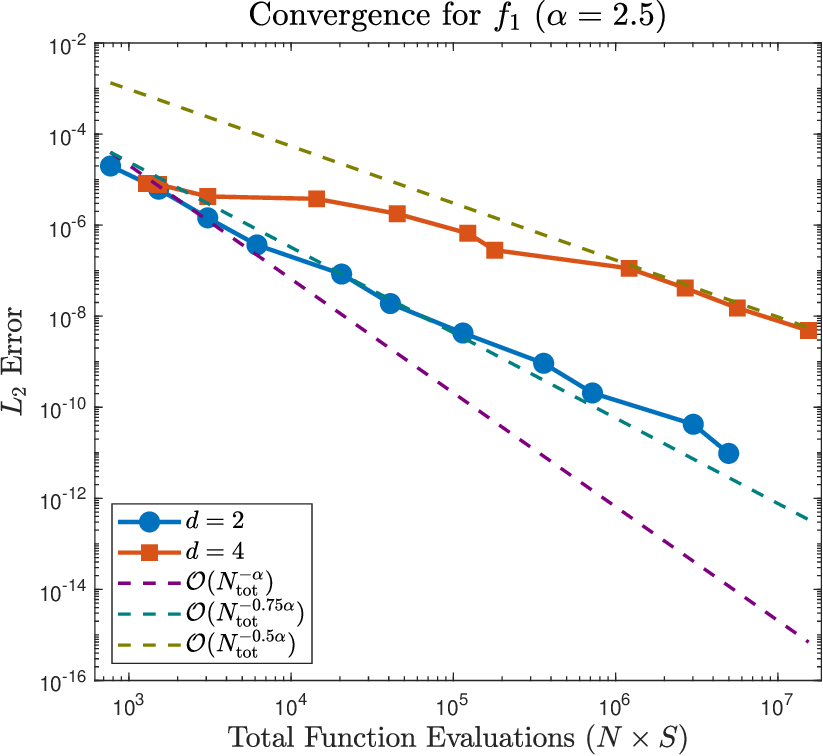}
        \caption{$L_2$ error w.r.t total cost $N_{tot}$}
        \label{subfig:f1_L2_Ntot}
    \end{subfigure}
    
    \vspace{0.4cm} 
    
    \begin{subfigure}{0.47\textwidth}
        \centering
        \includegraphics[width=\textwidth]{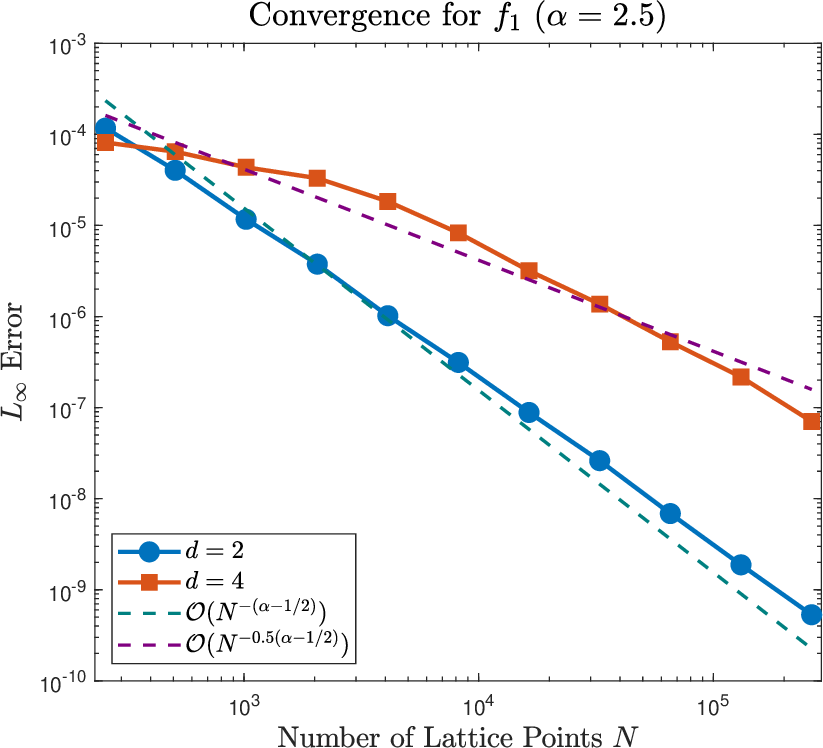}
        \caption{$L_\infty$ error w.r.t base lattice size $N$}
        \label{subfig:f1_Linf_N}
    \end{subfigure}
    \hfill
    \begin{subfigure}{0.47\textwidth}
        \centering
        \includegraphics[width=\textwidth]{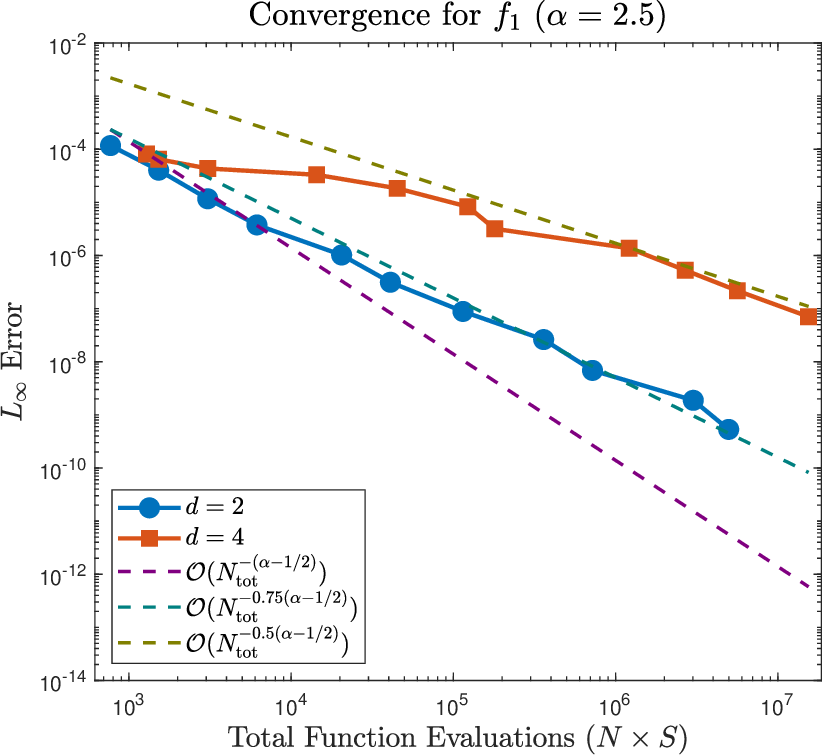}
        \caption{$L_\infty$ error w.r.t total cost $N_{tot}$}
        \label{subfig:f1_Linf_Ntot}
    \end{subfigure}
    
    \caption{The $L_2$ and $L_{\infty}$ errors for the periodic test function $f_1$ across dimensions $d=2$ and $d=4$.}
    \label{fig:convergence_func1}
    \end{figure}
    \vspace{-3mm}

\subsection{Poisson partial differential equation}
To demonstrate the practical utility of the deterministic multiple-shift lattice algorithm in the non-periodic setting, we consider the Poisson problem with homogeneous Neumann boundary conditions. We employ the following highly oscillatory forcing function:
\begin{equation}
    \nabla^2 u(\vecx) = \sum_{j=1}^d \gamma_j(12x_j^2 - 12x_j + 2) \prod_{\substack{i=1 \\ i \neq j}}^d \left( \frac{1}{630} + \gamma_i \left( x_i^2(1 - x_i)^2 - \frac{1}{630} \right) \right) := f(\vecx),
\end{equation}
which is a multidimensional benchmark problem investigated in \cite{suryanarayana2016}. The exact solution for this physical system is known analytically as
\begin{equation}
    u(\vecx) = \prod_{j=1}^d \left( \frac{1}{630} + \gamma_j \left( x_j^2(1 - x_j)^2 - \frac{1}{630} \right) \right),
\end{equation}
subject to the global conservation constraint $\widehat{u}(\vecz) = \prod_{i=1}^d((1 + 20 \gamma_i)/ 630)$. Through the Taylor expansion of the tent-transformed polynomial terms, it can be verified that the source function belongs to the half-period cosine space, $f \in \mathcal{C}_{d,3/2-\epsilon,\vecgamma}$ for arbitrarily small $\epsilon > 0$. Accordingly, we set $\alpha = 1.5$ and $\gamma_j = 2^{(1-j)/10}$. Because the absolute $L_2$ norms of the analytical exact fields $f$ and $u$ are exceptionally small, reporting absolute errors would artificially inflate the perceived accuracy. To ensure a strict and numerically meaningful evaluation, we follow the methodology in \cite{suryanarayana2016} and explicitly report the \textit{relative} $L_2$ errors, i.e., $\|f - \tilde{f}\|_2 / \|f\|_2$ and $\|u - \tilde{u}\|_2 / \|u\|_2$, across dimensions $d = 2$ and $d=4$.

We first verify the empirical sampling cost of the adaptive deterministic shift strategy in the Cosine space. As shown in Figure \ref{fig:pde_shifts}, the required number of shifts $S$ maintains a strictly controlled growth well-approximated by a quadratic polynomial in $\log_2 N$. 

Subsequently, Figure \ref{fig:convergence_pde} presents the relative $L_2$ convergence for both the approximated source function $f$ (top row) and the final PDE solution $u$ (bottom row). The left column isolates the theoretical spatial resolution, strictly bounded by the base lattice size $N$. As established in Theorems \ref{thm:errf} and \ref{thm:erru}, the empirical errors for both $f$ and $u$ perfectly align with their respective optimal algebraic decay rates of $\mathcal{O}(N^{-\alpha})$.

The right column illustrates the practical computational efficiency measured against the total sampling cost $N_{tot} = N \times S$. The curves inevitably exhibit the pre-asymptotic drag inherent to the dimension-dependent shift penalty $\mathcal{O}((\log N)^{2})$. While this temporarily reduces the apparent decay slope for $d=4$, the corresponding spatial curves (left column) already demonstrate a clear downward acceleration, validating that the optimal asymptotic rate is ultimately recovered.

Crucially, across all parameter configurations, the relative error of the PDE solution $u$ remains structurally lower in magnitude than that of the forcing function $f$. This numerical behavior explicitly captures the physical smoothing effect induced by the inverse Laplacian operator, where the squared inverse eigenvalues $1/\lambda_k^2 \le 1/\pi^4$ strictly and uniformly damp the high-frequency oscillatory residuals.

\begin{figure}[htbp]
        \centering
        \includegraphics[width=0.5\textwidth]{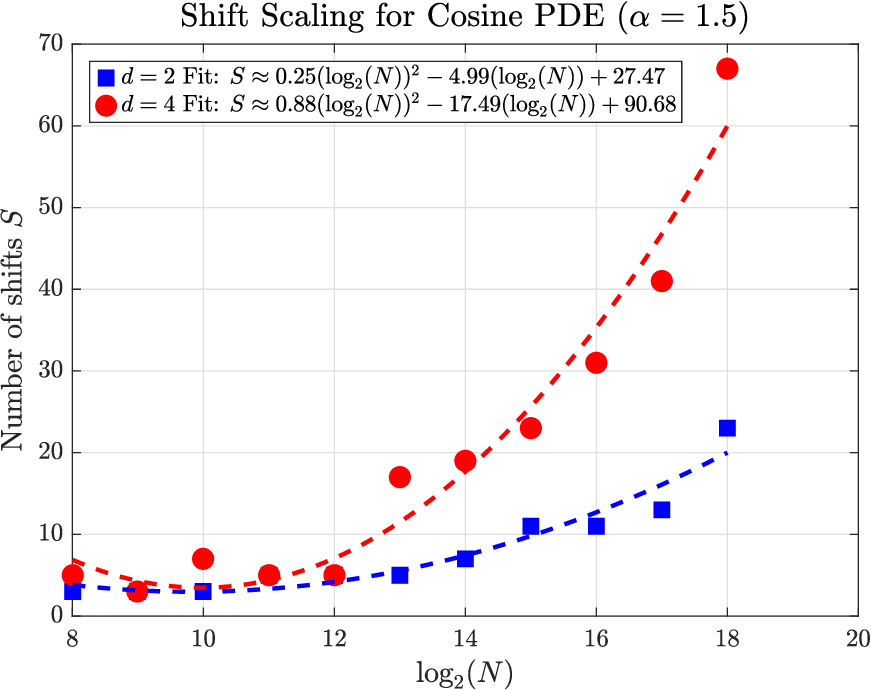}
    \caption{The required number of shifts $S$ for the meshless PDE solver across dimensions $d=2$ and $d=4$.}
    \label{fig:pde_shifts}
\end{figure}

\begin{figure}[htbp]
    \centering
    \begin{subfigure}{0.48\textwidth}
        \centering
        \includegraphics[width=\textwidth]{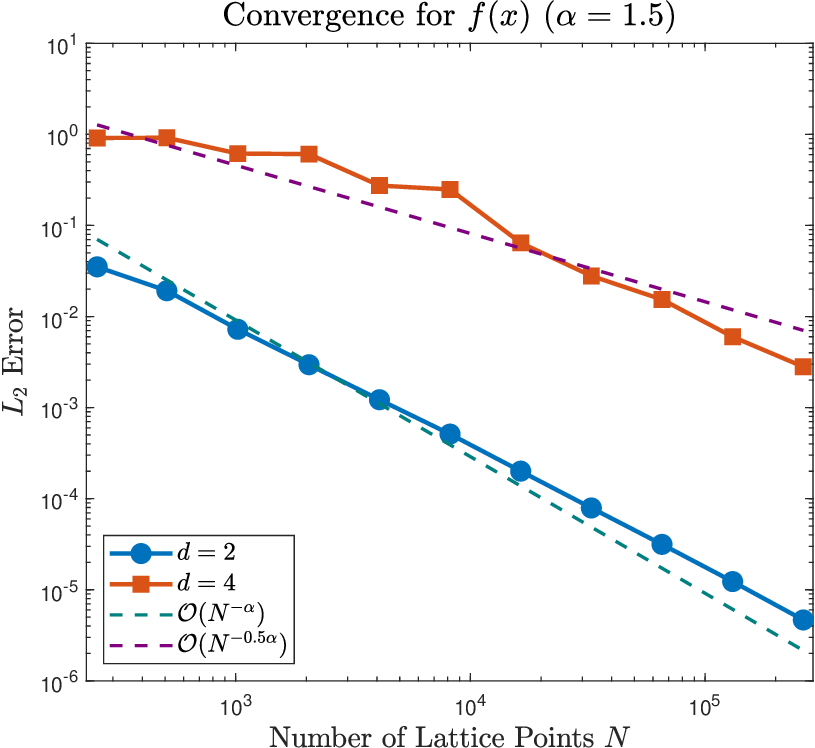} 
        \caption{$L_2$ error of $f$ w.r.t base lattice size $N$}
        \label{subfig:pde_f_L2_N}
    \end{subfigure}
    \hfill
    \begin{subfigure}{0.48\textwidth}
        \centering
        \includegraphics[width=\textwidth]{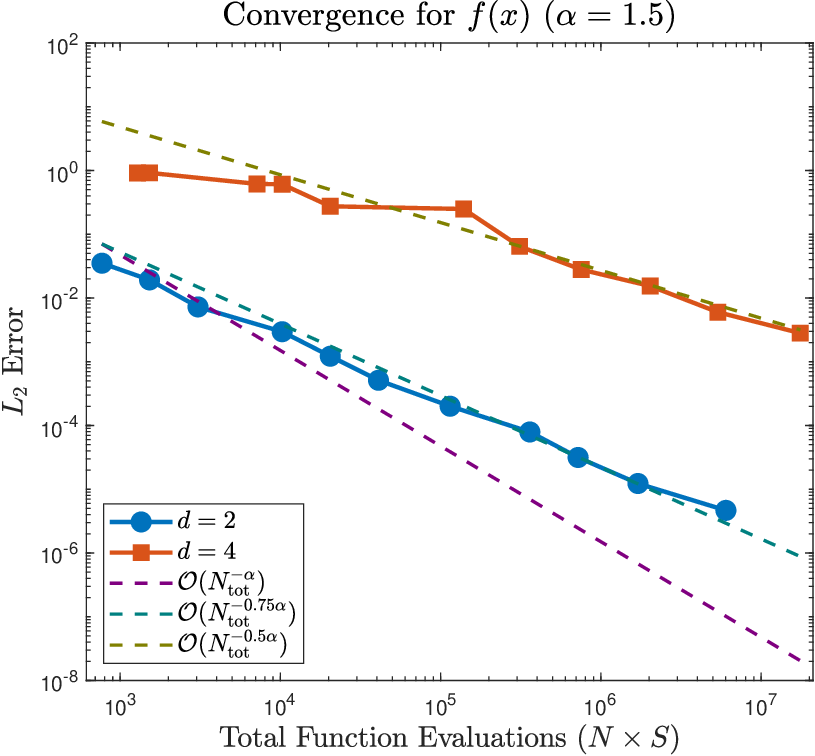}
        \caption{$L_2$ error of $f$ w.r.t total cost $N_{tot}$}
        \label{subfig:pde_f_L2_Ntot}
    \end{subfigure}
    
    \vspace{0.4cm} 
    
    \begin{subfigure}{0.48\textwidth}
        \centering
        \includegraphics[width=\textwidth]{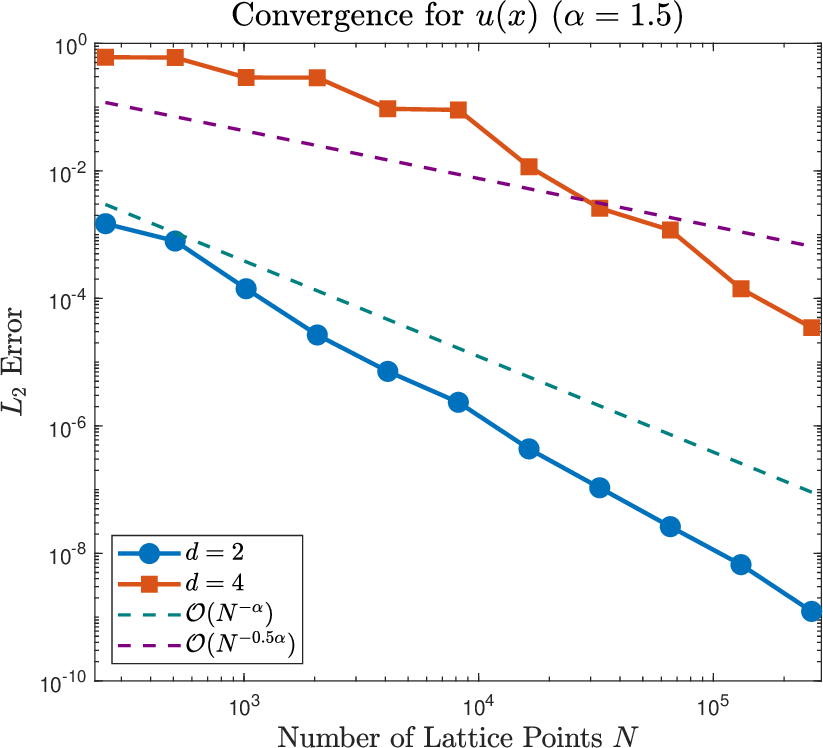}
        \caption{$L_2$ error of $u$ w.r.t base lattice size $N$}
        \label{subfig:pde_u_L2_N}
    \end{subfigure}
    \hfill
    \begin{subfigure}{0.48\textwidth}
        \centering
        \includegraphics[width=\textwidth]{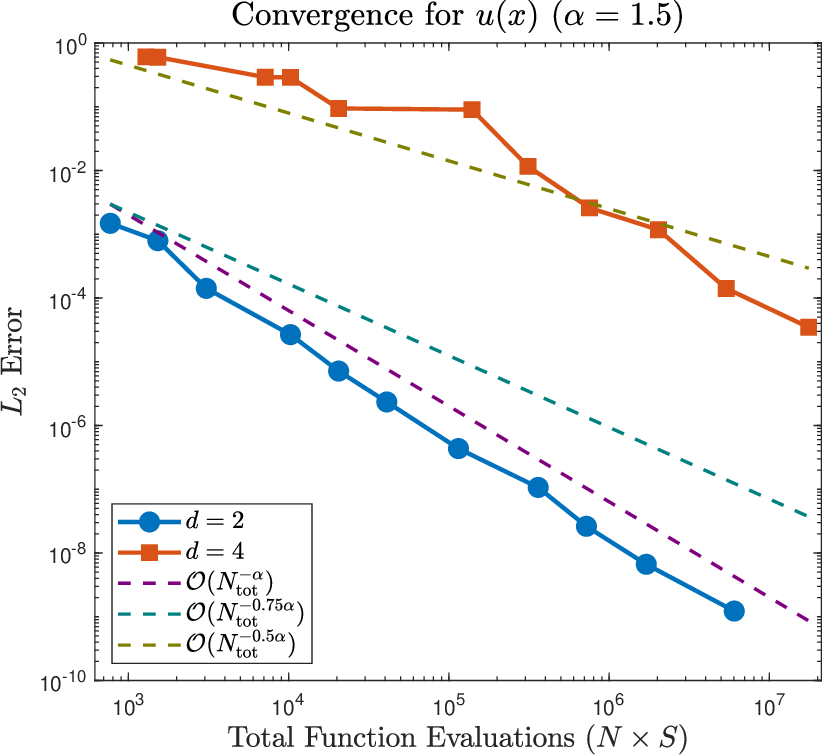}
        \caption{$L_2$ error of $u$ w.r.t total cost $N_{tot}$}
        \label{subfig:pde_u_L2_Ntot}
    \end{subfigure}
    
    \caption{The relative $L_2$ convergence of the proposed deterministic multiple-shift spectral solver
for the non-periodic Neumann problem $(\alpha = 1.5)$.}
    \label{fig:convergence_pde}
    \end{figure}

\section{Conclusion}
\label{sec:conclusion}
In this paper, we proposed a fully deterministic framework based on multiple-shift rank-1 lattice rules for multivariate function approximation without heavy pre-computation. By exploiting the algebraic structure of aliased frequency fibers, we introduced a uniform shift strategy. Coupled with an adaptive hybrid construction algorithm driven by the Chinese Remainder Theorem and the Weil bound, this approach structurally eliminates the reliance on probabilistic oversampling and massive pre-computed base lattices. We rigorously proved that this deterministic decoupling reduces the penalty factor associated with the maximum fiber length from $\mathcal{O}(R^{3/2})$ to $\mathcal{O}(R^{1/2})$ while maintaining the optimal convergence rate of $\mathcal{O}(N^{-\alpha+\tau})$ in the weighted Korobov space for any arbitrarily small $\tau > 0$.

Furthermore, we extended this deterministic framework to non-periodic domains via the tent transformation. By establishing a strict projection equivalence, we proved that the proposed algorithm attains the optimal approximation order in the half-period cosine space for both $L_2$ and $L_\infty$ errors, rigorously resolving a previously open theoretical problem regarding the exact decay rates in such spaces. Consequently, this mathematically validates the framework as a highly efficient meshless spectral solver for high-dimensional boundary value problems, preserving the source function's approximation rate for the Poisson equation with Neumann boundary conditions.

Numerical experiments extensively corroborate these theoretical guarantees. The adaptive strategy achieves an order-of-magnitude reduction in sampling cost compared to state-of-the-art probabilistic methods, while algebraically bounding the condition number to ensure absolute numerical stability in both function approximation and PDE solving.

Despite these advancements, the worst-case error bounds inherently retain a logarithmic dependence on the spatial dimension $d$, inherited from the maximum fiber length $R \le \mathcal{O}((\log N)^{d-1})$, which precludes strong tractability. Future work will explore analytical mechanisms to mitigate this dimension-dependent penalty, extend the deterministic framework to higher-order digital nets, and adapt the spectral solver for more complex physical systems with variable coefficients or non-rectangular geometries.


\bibliographystyle{plain}
\bibliography{references_ms}
\end{document}